\newtheorem{theorem}{Theorem}
\newaliascnt{lemma}{theorem}
\newtheorem{lemma}[lemma]{Lemma}
\crefname{lemma}{lemma}{lemmas}
\Crefname{Lemma}{Lemma}{Lemmas}
\newaliascnt{proposition}{theorem}
\newtheorem{proposition}[proposition]{Proposition}
\crefname{proposition}{proposition}{propositions}
\Crefname{Proposition}{Proposition}{Propositions}
\newaliascnt{corollary}{theorem}
\crefname{corollary}{corollary}{corollaries}
\Crefname{Corollary}{Corollary}{Corollaries}
\theoremstyle{definition}
\newtheorem{definition}{Definition}
\theoremstyle{exercise}
\newaliascnt{remark}{definition}
\newtheorem{remark}[remark]{Remark}
\crefname{remark}{remark}{remarks}
\Crefname{remark}{Remark}{Remarks}
\newaliascnt{example}{theorem}
\crefname{example}{example}{examples}
\Crefname{example}{Example}{Examples}
\newenvironment{proof}{\textit{Proof.} }{\hfill $\square$\\}
\newenvironment{hyp}[1]{
\refstepcounter{hyp#1}
\begin{itemize}[label=({\bf #1\arabic{hyp#1}}),resume=hyp#1]\begin{sf}}
{\end{sf}\end{itemize}}
\newcommandx{\refhyp}[3][1=,3=]{
\ifthenelse{\equal{#1}{}}
{\emph{\ref{#2}}}
{
\ifthenelse{\equal{#3}{}}
{\hspace{-2.5mm}{(\textbf{#1\ref{#2}})}\hspace{-2.5mm}}
{\hspace{-2.5mm}(\textbf{#1\ref{#2}--\ref{#3}})\hspace{-2.5mm}}
}
}
\newcommand{\as}{\mbox{-}\mathrm{a.s.}}
\newcommand{\pp}{\mbox{-}\mathrm{a.e.}}
\newcommandx\A[2][1=]{
\ifthenelse{\equal{#1}{}}
{\hspace{-1mm}(\textbf{A\ref{#2}})\hspace{-1mm}}
{\hspace{12mm}(\textbf{A\ref{#1}--\ref{#2}})\hspace{-1mm}}
}
\newcommand{\borel}[1]{\mathcal{B}(#1)}
\newcommandx\C[2][1=]{
\ifthenelse{\equal{#1}{}}
{\hspace{-1mm}(\textbf{C\ref{#2}})\hspace{-1mm}}
{\hspace{-1mm}(\textbf{C\ref{#1}--\ref{#2}})\hspace{-1mm}}
}
\newcommand{\chunk}[3]{{#1}_{#2:#3}}
\newcommandx{\cond}[2]{
\lrb{\left. #1\right| #2}
}
\newcommandx\dens[3][1=,3=]%
\newcommand{\D}{\mathsf{\Delta}}
\newcommand{\Dhmm}{\overline{\mathsf{\Delta}}}
\newcommand{\e}{\mathrm{e}}
\newcommandx{\ens}[3][1=]
{
\ifthenelse{\equal{#1}{}}
{\left\{#2:\eqsp #3\right\}}
{\{#2:\eqsp #3\}}
}
\newcommand{\eqsp}{}
\newcommand{\eqspeq}{\quad \quad} 
\newcommand{\eqdef}{\coloneqq}
\newcommandx\esp[3][1=,3=]{{\mathbb E}_{#1}^{#3} \left[#2\right]}
\newcommandx\cesp[4][1=,4=]{{\mathbb E}_{#1}^{#4} \left[\left. #2\right| #3\right]}
\newcommandx\espTxt[3][1=,3=]{
\ifthenelse
{\equal{#3}{}}
{{\mathbb E}_{#1} (#2)}
{{\mathbb E}_{#1} (#2|#3)}
}
\newcommand{\ensemble}[2]{\left\{#1\,:\eqsp #2\right\}}
\newcommandx\F[2][1=]{
\ifthenelse{\equal{#1}{}}
{\hspace{-1mm}(\textbf{F\ref{#2}})\hspace{-1mm}}
{\hspace{-1mm}(\textbf{F\ref{#1}--\ref{#2}})\hspace{-1mm}}
}
\newcommand{\fdPOMM}{fdPOMM}
\newcommandx\feyn[2][1=]
\newcommand{\filtletter}{\bar{\phi}}
\newcommandx\filt[3][1=,3=]
\newcommand{\gd}[2]{p_{#1,#2}}
\newcommand{\gdstar}[1]{{p^*_{#1}}}
\newcommand{\kernel}{\mathbf{K}}
\newcommandx{\kullback}[3][1=]{
\ifthenelse{\equal{#1}{}}
{\mathrm{KL}\left(#2\|#3\right)}
{\mathrm{KL}(#2\|#3)}
}
\renewcommand{\ker}[1]{\mathbf{#1}}
\newcommandx\LGSS[2][1=]{
\ifthenelse{\equal{#1}{}}
{\hspace{-1mm}(\textbf{LGSS\ref{#2}})\hspace{-1mm}}
{\hspace{-1mm}(\textbf{LGSS\ref{#1}--\ref{#2}})\hspace{-1mm}}
}
\newcommand{\lleb}{\lambda^{\mathrm{Leb}}}
\newcommandx{\Lkh}[2][1=]{
\ifthenelse{\equal{#1}{}}
{\boldsymbol{\operatorname{L}}\langle{#2}\rangle}
{\boldsymbol{\operatorname{L}}_{#1}\langle{#2}\rangle}
}
\newcommandx\likeli[3][1=]{\pi_{#1}\langle #2 \rangle(#3)}
\newcommand{\lr}[1]{\left( #1\right)}
\newcommand{\lrb}[1]{\left[ #1\right]}
\newcommand{\lrc}[1]{\left\{ #1\right\}}
\newcommand{\mcf}{\mathcal F}
\newcommand{\mcg}{\mathcal G}
\newcommand{\mct}{\mathcal T}
\newcommand{\mcu}{\mathcal U}
\newcommandx\NL[2][1=]{
\ifthenelse{\equal{#1}{}}
{\hspace{-1mm}(\textbf{NL\ref{#2}})\hspace{-1mm}}
{\hspace{-1mm}(\textbf{NL\ref{#1}--\ref{#2}})\hspace{-1mm}}
}
\newcommand{\nset}{\mathbb{N}}
\newcommand{\nsetpos}{\mathbb{N}^\ast}
\newcommand\1[1]{\mathbbm{1}_{#1}}
\newcommand\oneSub[1]{\mathbbm{1}_{#1}}
\newcommand\indiceTxt[1]{\mathbbm{1}\lrc{#1}}
\newcommand{\piv}{{\pi_{\star}}}
\newcommand{\pix}{\pi^{X}}
\newcommand{\py}[3]{p_{#1,#2}(#3)}
\newcommandx{\psup}[2][1=]{\hat p_{#1}(#2)}
\newcommandx\partfilt[3][1=,3=]
\newcommandx\partpred[3][1=,3=]%
\newcommand{\Pblock}[2][]
{\ifthenelse{\equal{#1}{}}{\boldsymbol{\operatorname{L}}\langle#2\rangle}{\boldsymbol{\operatorname{L}}^{#1}\langle#2\rangle}
}
\newcommand{\pblock}[2][]
{\ifthenelse{\equal{#1}{}}{\mathbf{\ell}\langle#2\rangle}{\mathbf{\ell}^{#1}\langle #2\rangle}
}
\def\PE{\mathbb{E}}
\def\PP{\mathbb{P}}
\newcommandx\pred[3][1=,3=]%
\newcommandx\prob[3][1=,3=]{
\ifthenelse
{\equal{#3}{}}
{{\mathbb P}_{#1}\left(#2\right)}
{{\mathbb P}_{#1} \left(\left. #2\right| #3\right)}
}
\newcommandx\probTxt[3][1=,3=]{
\ifthenelse
{\equal{#3}{}}
{{\mathbb P}_{#1} (#2)}
{{\mathbb P}_{#1} ( #2 | #3)}
}
\newcommand{\prior}{\lambda}
\newcommandx{\post}[2][1=]{
\lambda_{#1}\langle #2\rangle}
\newcommand{\rme}{\mathrm{e}}
\newcommand{\rset}{\mathbb{R}}
\newcommand{\rsetpos}{\mathbb{R}^\ast_+}
\def\rmd{\mathrm{d}}
\newcommandx\secfact[2][1=]{\Lambda_{#1} \langle #2 \rangle (h)}
\newcommandx\secfactub[2][1=]{\Upsilon_{#1} \langle #2 \rangle (h)}
\newcommandx\supnorm[2][1=]{\ensuremath{\left\|#2\right\|^{#1}_{\infty}}}
\newcommandx\sequence[3][2=n,3=\zset]{\ensuremath{(#1_{#2})_{#2 \in #3}}}
\newcommandx\sequencen[2][2=n\in\zset]{\ensuremath{(#1)_{#2}}}
\newcommand{\thetapr}{\theta'}
\newcommand{\tgdstar}[1]{\tilde{p}^*_{#1}}
\newcommand{\tgdn}[1]{\tilde{p}_{#1}}
\newcommand{\bgdn}[1]{\overline{p}_{#1}}
\newcommand{\bgdstar}[1]{\overline{p}^*_{#1}}
\newcommand{\tensprod}{\varotimes}
\newcommandx{\Tk}[1][1=]{\ker{Q}_{#1}}
\newcommandx{\Td}[1][1=]{q_{#1}}
\newcommand{\thv}{{\theta_\star}}
\newcommandx{\Tdx}[1][1=]{k_{#1}}
\newcommandx{\Tx}[1][1=]{\ker{K}_{#1}}
\newcommandx{\Tdy}[1][1=]{g_{#1}}
\newcommandx\variance[3][1=]{\sigma_\Xinit^{#1} \langle #2 \rangle (#3)}
\newcommand{\wrt}{with respect to}
\newcommand{\vnorm}[2]{\|#1\|_{#2}}
\DeclareMathOperator*{\basicwc}{\Longrightarrow}
\newcommand\weakconv[1]{\basicwc_{#1}}
\def\Xinit{\eta}
\def\Xinitv{{\eta_\star}}
\newcommand{\zset}{\mathbb{Z}}
\newcommand{\URoot}[1][]
{\ifthenelse{\equal{#1}{}}{\ensuremath{R}}{\ensuremath{R}_{#1}}}
\newcommand{\VRoot}[1][]
{\ifthenelse{\equal{#1}{}}{\ensuremath{S}}{\ensuremath{S}_{#1}}}
\newcommand{\UCov}[1][]%
{%
\ifthenelse{\equal{#1}{}}{\URoot {^t\URoot}}{\URoot[#1] {^t\URoot[#1]}}%
}
\newcommand{\VCov}[1][]%
{%
\ifthenelse{\equal{#1}{}}{\VRoot {^t\VRoot}}{\VRoot[#1] {^t\VRoot[#1]}}%
}
\def\Xset{\mathsf{X}}
\def\Yset{\mathsf{Y}}
\def\Zset{\mathsf{Z}}
\def\Xsigma{\mathcal{X}}
\def\Ysigma{\mathcal{Y}}
\def\Zsigma{\mathcal{Z}} 
\date{\today}
\begin{document}

\begin{frontmatter}
\title{Posterior consistency for partially observed Markov models}
\runtitle{Posterior consistency for fPOMMs}

\begin{aug}
 \author{\fnms{Randal}  \snm{Douc} \thanksref{rd} \ead[label=e1]{randal.douc@telecom-sudparis.eu}}
  \author{\fnms{Jimmy} \snm{Olsson} \thanksref{jo}\ead[label=e2]{jimmy@maths.lth.se}}
  \and \author{\fnms{Fran\c{c}ois} \snm{Roueff} \thanksref{fr}\ead[label=e3]{roueff@telecom-paristech.eu}}
\runauthor{R. Douc, J. Olsson and F. Roueff}

\thankstext{rd}{SAMOVAR, CNRS UMR 5157, Institut T\'el\'ecom/T\'el\'ecom SudParis,  9 rue Charles Fourier, 91000 Evry.}
\thankstext{jo}{KTH Royal Institute of Technology, Stockholm, Sweden. Jimmy Olsson is supported by the Swedish Research Council, Grant 2011-5577.}
\thankstext{fr}{LTCI, CNRS 5141, Institut T\'el\'ecom/T\'el\'ecom Paristech,  42 rue Barrault, 75000 Paris.}
\end{aug}

\begin{abstract}
In this work we establish the posterior consistency for a para\-metrized family of partially observed, fully dominated Markov models. As a main assumption, we suppose that the prior distribution assigns positive probability to all neighborhoods of the true parameter, for a distance induced by the expected Kullback-Leibler divergence between the family members' Markov transition densities. This assumption is easily checked in general. In addition, under some additional, mild assumptions we show that the posterior consistency is implied by the consistency of the maximum likelihood estimator. The latter has recently been established also for models with non-compact state space. The result is then extended to possibly non-compact parameter spaces and non-stationary observations. Finally, we check our assumptions on examples including the partially observed Gaussian linear model with correlated noise and a widely used stochastic volatility model.
\end{abstract}
\end{frontmatter}

\section{Introduction}


We consider a very general framework where a bivariate Markov chain
$\sequence{Z}[n][\nset]$ taking on values in some
product state space $\Zset = \Xset \times \Yset$, i.e., $Z_n = (X_n, Y_n)$, is only partially observed
through the second component $\sequence{Y}[n][\nset]$. In this model, which we
refer to as a \emph{partially observed Markov model} (POMM)
(\cite{pieczynski:2003} uses the alternative term \emph{pairwise Markov
  chain}), any statistical inference has to be carried through on the basis of
the observations $\sequence{Y}[n][\nset]$ only, which is generally far from
straightforward due to the fact that the observation process
$\sequence{Y}[n][\nset]$ is, on the contrary to $\sequence{Z}[n][\nset]$,
generally non-Markovian. Of particular interest are the \emph{hidden Markov
  models} (HMMs) (alternatively termed \emph{state-space models} in the case
where $\Xset$ is continuous), which constitute a special case of the POMMs in
which the process $\sequence{X}[n][\nset]$ is itself a Markov chain, referred to
as the \emph{state process}, and the observations are conditionally independent 
given the states, such that the marginal conditional distribution of each observation
$Y_n$ depends only on the corresponding state $X_n$. The use of unobservable
states provides the HMMs with a relatively simple dependence structure which is
still generic enough to handle complex, real-world time series in a large
variety of scientific and engineering disciplines (such as financial
econometrics \cite{hull:white:1987,mamon:elliott:2007}, speech recognition
\cite{juang:rabiner:1991}, biology \cite{churchill:1992}, neurophysiology
\cite{fredkin:rice:1987}, etc.; see also the monographs
\cite{macdonald:zucchini:2009} and \cite{cappe:moulines:ryden:2005} for
introductive and state of the art treatments of the topic, respectively), and
the POMMs can be viewed as a natural extension and generalization of this model
class.

In this paper, we will consider a parameterized family of POMMs with parameter
space $\Theta$, where the latter is assumed to be furnished with some metric. For each
$\theta \in \Theta$, the dynamics of the model is specified by the transition
kernel $\Tk[\theta]$ of $\sequence{Z}[n][\nset]$ on $\Xset \times \Yset$, and we
will in this work restrict ourselves to the \emph{fully dominated case} where
$\Tk[\theta]$ has a transition density $\Td[\theta]$ w.r.t. some dominating
measure (all these objects will be defined rigorously in the next
section). Each transition kernel $\Tk[\theta]$ is assumed to have a unique
invariant distribution $\pi_\theta$.

We assume that we have access to a single observation trajectory
$\sequence{Y}[n][\nset]$ sampled from the canonical law $\PP$ induced by
$\Tk[\thv]$ and some initial distribution $\eta_\star$ on $\Xset \times \Yset$,
where $\thv \in \Theta$ is a distinguished parameter interpreted as the
\emph{true} parameter and $\eta_\star$ is generally different from
$\pi_\thv$. In order to estimate $\thv$ via the observations we adopt a
Bayesian framework and introduce a possibly improper \emph{prior}
distribution $\prior$ on $\Theta$, reflecting our a priori belief concerning
$\theta$, and compute the conditional---\emph{posterior}---distribution
$\post{\chunk{Y}{1}{n}}$ of $\theta$ given the observations $\chunk{Y}{1}{n} =
(Y_1, \ldots, Y_n)$, which is, for measurable $A \subseteq \Theta$ and
$\chunk{y}{1}{n} \in \Yset^n$, given by
\begin{equation*} 
    \post{\chunk{y}{1}{n}}(A) = \frac{\int_A p_\theta(\chunk{y}{1}{n}) \, \prior(\rmd \theta)}{\int_{\Theta} p_\theta(\chunk{y}{1}{n}) \, \prior(\rmd \theta)},
\end{equation*}
where $p_\theta(\chunk{y}{1}{n})$ denotes the density of $\chunk{Y}{1}{n}$ given $\theta$. In this general setting, we examine the asymptotics of the posterior distribution and identify model conditions under which the \emph{posterior consistency}
\begin{equation} \label{eq:posterior:consistency}
    \PP \lr{\post{\chunk{Y}{1}{n}}  \weakconv{n \to \infty} \delta_\thv}=1\eqsp,
\end{equation}
holds true, where $\weakconv{}$ denotes weak convergence and $\delta_\thv$
denotes the Dirac mass located at the true parameter $\thv$. In~\eqref{eq:posterior:consistency}, 
$\PP$ denotes the distribution of $\sequence{Y}[n][\nset]$ corresponding 
to the true parameter $\thv$; in this sense we adopt, by 
proving~\eqref{eq:posterior:consistency}, a frequentist point 
of view for the asymptotic behavior of the posterior distribution. 
However, establishing that the influence of the prior is 
overwhelmed by the data as the sample size $n$ grows to infinity 
is of fundamental interest in Bayesian analysis.

\subsection{Previous work}

From the frequentist inference point of view, POMMs have been subjected to extensive research during the last decades. For the important subclass formed by HMMs with finite state space, the asymptotic consistency of the \emph{maximum likelihood estimator} (MLE) was established by \cite{baum:petrie:1966,petrie:1969} and \cite{leroux:1992} in the cases of finite and general observation spaces, respectively, and these results were generalized gradually to more general HMMs in \cite{douc:matias:2002,douc:moulines:ryden:2004,catalot:laredo:2006}. The first MLE consistency result for general HMMs with possibly non-compact state space was obtained in \cite{douc:moulines:olsson:vanhandel:2011}, and \cite{douc:moulines:2012} extended further this result to misspecified models.  For POMMs that fall outside the HMM class, \cite{douc:moulines:ryden:2004} established the MLE consistency for autoregressive models with Markov regimes by applying strong mixing assumptions requiring typically the state space of the latent Markov chain to be compact. Recently, \cite{douc:doukhan:moulines:2013} established the MLE consistency for general observation-driven time series with possibly non-compact state space and \cite{douc-roueff-sim2015b} established the analogous result for general partially dominated POMMs, covering the observation-driven models as a special case. The latter work can be viewed as the state of the art when it concerns MLE analysis for POMMs. The mentioned works demonstrate a variety of techniques, but share the assumption that the parameter space $\Theta$ is a compact set. 

On the other hand, when it concerns Bayesian asymptotic analysis of POMMs, there are only a handful results of which all treat exclusively HMMs. In the case of HMMs with a finite state space, \cite{gassiat:rousseau:2014} provides the posterior consistency (with rates) for parametric models with an unknown number of states and the recent paper \cite{vernet:2015} deals with posterior concentration in the non-parametric case. For more general HMMs, \cite{degunst:shcherbakova:2008} establishes, along the now classical lines of \cite[Theorem~8.3]{lehmann:casella:1998}, a Bernstein-von~Mises-type result under the assumption that the model satisfies, first, a law of large numbers for the log-likelihood function, second, a central limit theorem for the score function and, third, a law of large numbers for the observed Fisher information. As these asymptotic properties, which are the cornerstones of the proof of the asymptotic normality of the MLE, can be established for models satisfying the strong mixing assumption (see \cite{douc:moulines:ryden:2004} and \cite[Chapter~12]{cappe:moulines:ryden:2005}), the result holds, in principle, true for HMMs with a compact state space. A more direct approach to the posterior consistency for HMMs is taken in \cite{papavasiliou:2006}, where the author works with a large deviation bound for the observation process; nevertheless, the analysis is driven by very restrictive model assumptions in terms of strong mixing and additive observation noise.

In conclusion, all available results on posterior concentration for parametric HMMs rest on very stringent model assumptions, in the sense that the state space of the unobservation process is assumed to be compact. Needless to say, this is not the case for many models met in practical applications (such as the linear Gaussian state-space models). In addition, the mentioned results require, without exception, also the parameter space to be compact, which is generally a severe restriction for the Bayesian. Consequently, a general posterior consistency result for parametric POMMs (and, in particular, parametric HMMs) has hitherto been lacking. In the light of the widespread and ever increasing interest in Bayesian inference in models of this sort, which is boosted by novel achievements in computational statistics (especially in the form of \emph{particle} \cite{gordon:salmond:smith:1993} and \emph{particle Markov chain Monte Carlo methods} \cite{andrieu:doucet:holenstein:2010}), this is indeed remarkable, and the goal of the present paper is to fill this gap.

\subsection{Our approach}

In this paper, we establish the posterior consistency \eqref{eq:posterior:consistency} under very mild assumptions which can be checked for a large class of POMMs used in practice. The result is stated in Theorem~\ref{thm:noncompact} for general POMMs with positive transition density (Theorem~\ref{thm:main:result} deals with the case of a compact parameter space) and in Theorem~\ref{thm:hmm} for HMMs under an alternative set of assumptions requiring, e.g., only the emission density to be positive. The starting point of our analysis is the observation that it is, by the Portmanteau lemma, enough to show that for all $A_p = \{ \theta \in \Theta : d(\theta, \thv) \geq 1/p \}$, $p \in \nsetpos$,
\begin{equation} \label{eq:sufficient:cond:consistency}
\limsup_{n \to \infty} \post{\chunk{Y}{1}{n}}(A_p) = 0, \quad \PP\mbox{-a.s.}
\end{equation}
(see Remark~\ref{rem:post:const} for details). Now, by expressing the posterior as
\begin{equation} \label{eq:intro:posterior:alt}
\post{\chunk{Y}{1}{n}}(A) = \frac{\int_A p_\theta(\chunk{Y}{1}{n})  / p_{\thv}(\chunk{Y}{1}{n}) \, \prior(\rmd \theta)}{\int_{\Theta} p_\theta(\chunk{Y}{1}{n})  / p_{\thv}(\chunk{Y}{1}{n}) \, \prior(\rmd \theta)},
\end{equation}
we conclude that \eqref{eq:sufficient:cond:consistency} will hold if
\begin{itemize}
\item[--] all closed sets $A$ not containing $\thv$ are $\PP$-\emph{remote} from $\thv$ in the sense that the numerator
of \eqref{eq:intro:posterior:alt} tends to zero exponentially fast under $\PP$;
\item[--] for all $\delta > 0$, there exists some subset $\Theta_\delta$ of $\Theta$ which is charged by the prior, i.e., $\lambda(\Theta_\delta) > 0$, and such that for all $\theta \in \Theta_\delta$, the ratio $p_\theta(\chunk{Y}{1}{n})  / p_{\thv}(\chunk{Y}{1}{n})$ is, $\PP$-a.s., eventually bounded from below by $\e^{-\delta n}$. This \emph{asymptotic merging property} forces the numerator of \eqref{eq:intro:posterior:alt} to vanish at a faster rate than the denominator for all $\PP$-remote sets $A$, implying \eqref{eq:sufficient:cond:consistency}.
\end{itemize}

This machinery, which is adopted from \cite{barron:chervish:wasserman:1999} and described generally in Section~\ref{sec:general-setting}, does not require the model under consideration to be a POMM; it is hence of independent interest.  As we will see, the situation of a non-compact parameter space calls for a refined notion of $\PP$-remoteness; indeed, by operating under the assumption that the sequence of posterior distributions is \emph{tight}, it is enough to require $\PP$-remoteness to hold on a sufficiently large compact subset of $\Theta$.

The $\PP$-remoteness, the asymptotic merging property and the tightness of the posterior are the fundamental building blocks of our analysis of the posterior concentration. Interestingly, a key finding of us is that the $\PP$-remoteness is closely related to the MLE consistency; more specifically, in Proposition~\ref{prop:equiv:AMLE} we establish that if all sequences of \emph{approximate MLEs} (see Definition~\ref{def:AMLE}) on some compact subset $K$ of $\Theta$ (with $\prior(K) < \infty$) containing $\thv$ are strongly consistent, then $A \cap K$ is $\PP$-remote for all closed sets $A$ not containing $\thv$.
As mentioned in the literature review above, the MLE can, under the assumption
that the parameter space is compact, be proven to be consistent under very mild
model assumptions satisfied for most fully dominated POMMs, and we will hence
obtain the remoteness for free for a large set of models.

When it concerns the asymptotic merging property, we derive the instrumental bound
\begin{equation} \label{eq:intro:image:bound}
\liminf_{n \to \infty} n^{-1} \log \frac{p_\theta(\chunk{Y}{1}{n})}{p_{\thv}(\chunk{Y}{1}{n})} \geq \liminf_{n \to \infty} n^{-1} \log \frac{\bar{p}_\theta(\chunk{Z}{1}{n})}{\bar{p}_{\thv}(\chunk{Z}{1}{n})},
\end{equation}
where $\bar{p}_\theta(\chunk{z}{1}{n})$, $\chunk{z}{1}{n} \in \Zset^n$, denotes the density of the \emph{complete data} $\chunk{Z}{1}{n}$ given $\theta$ (see Lemma~\ref{lem:image-density-lemma} for a more general formulation). In the stationary mode, i.e., when $\eta_\star = \pi_\thv$, the right hand side of \eqref{eq:intro:image:bound} tends, by Birkhoff's ergodic theorem, to minus the expectation $\Delta(\thv, \theta)$ of the Kullback-Leibler divergence (KLD) between $\Tk[\thv]$ and $\Tk[\theta]$ under the stationary distribution. As a consequence, the asymptotic merging property holds true as long as the prior is \emph{information dense} at $\thv$ in the sense that $\prior(\{ \theta \in \Theta : \Delta(\thv, \theta) \geq \delta \}) > 0$ for all $\delta > 0$. This condition can however be checked straightforwardly in general, since $\Delta(\thv, \theta)$ involves a KLD between perfectly known \emph{transition kernels}. Without access to the bound \eqref{eq:intro:image:bound}, an alternative strategy would have been to study directly the limit of the left hand side of \eqref{eq:intro:image:bound} by, e.g., going to ``the infinite past'' in the spirit of \cite{douc:moulines:ryden:2004,douc:moulines:2012}; however, this approach would require the analysis of an expected KLD between \emph{ergodic limits} $p_\theta(Y_0 \mid Y_{- \infty:-1})$ and $p_\thv(Y_0 \mid Y_{- \infty:-1})$ (we refer to the mentioned works for the meaning of these quantities) under the stationary distribution, which is infeasible in general. 

As described above, our technique of handling models with a non-compact parameter space is based on the assumption that the sequence of posterior distributions is tight. Recalling that our objective is the establishment of the gradual concentration of these distributions around the true parameter as $n$ increases, we may expect this assumption to be mild. Indeed, by operating in the stationary mode using Kingman's subadditive theorem, we are able to derive handy assumptions under which the posterior tightness holds at an exponential rate (see Theorem~\ref{thm:noncompact} and Proposition~\ref{prop:hors:compact}). As far as is known to us, this is the first result ever in this direction for models of this sort, and we believe that a similar approach may be used also for extending existing results on MLE consistency to the setting of a non-compact parameter space.

We remark that Birkhoff's ergodic theorem and Kingman's subadditive theorem require the observation process to be stationary (i.e. $\eta_\star = \pi_\thv$). Nevertheless, if for all parameters $\theta$ and initial distributions $\eta$, the distribution of $Y_{1:\infty}$, when initialized according to $\eta$ and evolving according to $\theta$, admits a positive density w.r.t. the distribution of $Y_{1:\infty}$ under the stationary distribution $\pi_\theta$, one may prove that any property that holds a.s. under the latter distribution holds a.s. under the former as well. That such positive densities exist can be established for POMMs in general under the assumption that the transition density is positive (Lemma~\ref{lem:radon}) and for HMMs in particular under the weaker assumption that the emission density is positive and the hidden chain is geometrically ergodic (Lemma~\ref{lem:radon:hmm}), and we are consequently able to treat also the non-stationary case. As far as is known to us, this efficient approach to non-stationarity results has never been taken before.

Finally, we demonstrate the flexibility of our results by checking carefully our assumptions on a partially observed linear Gaussian Markov model as well as the widely used stochastic volatility model proposed by \cite{hull:white:1987} (the latter falls into the framework of HMMs).

To sum up, our contribution is fourfold, since we
\begin{itemize}
\item[--] establish the posterior consistency for very general POMMs under mild assumptions, which allow the state space of the latent part of the model to be non-compact and which can be checked for a large number of models used in practice.
\item[--] link, via the concept of $\PP$-remoteness, the posterior consistency to the consistency of the MLE.
\item[--] are able to treat also the case of a non-compact parameter space.
\item[--] treat efficiently the case of non-stationary observations.
\end{itemize}

The paper is structured as follows. In Section~\ref{sec:part-observ-mark}, we
introduce the POMM framework under consideration and state, in
Section~\ref{sec:main:results}, our main results
(Theorems~\ref{thm:main:result}--\ref{thm:noncompact}) and assumptions. In particular,
we provide an alternative set of assumptions that are taylor-made for the
special case of HMMs. Section~\ref{sec:examples} treats the two examples
mentioned previously and discusses generally our assumptions in the light of
nonlinear state-space models. In Section~\ref{sec:gener-approach} we embed the
problem of posterior concentration into the general framework outlined above,
serving as a machinery for the proofs of our main results. The latter proofs
are found in Section~\ref{sec:proof-main-results} and in
Section~\ref{sec:conclusion} we conclude the paper.


\section{Fully dominated partially observed Markov models (fdPOMM)}

\label{sec:part-observ-mark}

\subsection{Setting}
\label{sec:setting-part-observ-mark}
Let $(\Xset,\Xsigma)$ and $(\Yset,\Ysigma)$ be general measurable spaces
referred to as \emph{state space} and \emph{observation space},
respectively. The product space $\Zset=\Xset\times\Yset$ is then endowed with
the product $\sigma$-field $\Zsigma=\Xsigma\tensprod\Ysigma$, and we set
$\Omega=\Zset^{\nset}$ and $\mcf=\Zsigma^{\tensprod\nset}$. Let further
$\sequence{Z}[n][\nset]$, $\sequence{X}[n][\nset]$ and $\sequence{Y}[n][\nset]$
denote the canonical processes taking on values in the spaces
$(\Zset,\Zsigma)$, $(\Xset,\Xsigma)$ and $(\Yset,\Ysigma)$, respectively, and
defined by $Z_n(\omega)=(x_n,y_n)$, $X_n(\omega)=x_n$ and $Y_n(\omega)=y_n$,
where $\omega=\sequencen{(x_k,y_k)}[k\in\nset]$.  Now, for all $n \in
\nsetpos$, only $\sequence{Y}[n][\nsetpos]$ is observable, and for this reason
we refer to the model as \emph{partially observed}. Let us define
$\mcf_n=\sigma(\chunk{Y}{1}{n})$ for all $n \in \nset$, with $\chunk Y1n =
(Y_1,\ldots, Y_n)$ serving as our general notation for vectors. In addition,
let $(\Theta, \mct)$ be a measurable space and let $\{\Tk[\theta],
\theta\in\Theta\}$ be a collection of Markov transition kernels on $(\Zset,
\Zsigma)$. Denote, for all $\theta \in \Theta$, by $\PP^\theta_\Xinit$ the law
of the canonical Markov chain $\sequence{Z}[n][\nset]$ induced by the Markov
transition kernel $\Tk[\theta]$ and the initial distribution $\Xinit$. We say
that the model is \emph{fully dominated} if there exist two $\sigma$-finite
measures $\mu$ and $\nu$ on $(\Xset,\Xsigma)$ and $(\Yset,\Ysigma)$,
respectively, such that for all $\theta\in\Theta$ and $z\in\Zset$, the
probability measure $\Tk[\theta](z,\cdot)$ is dominated by $\mu\tensprod\nu$,
and we denote by $\Td[\theta](z,\cdot)$ the corresponding density. We may now
introduce the main class of models studied in this article.
\begin{definition}\label{def:fdPOMM}
  We say that $\sequence{Y}[n][\nsetpos]$ follows a \emph{fully dominated partially
  observed Markov model} (fdPOMM) if, for all $n \in \nsetpos$, under the previous
  definitions and assumptions, the distribution of $\chunk Y1n$ is given by
  $\PP^\theta_\eta(\chunk Y1n\in\cdot)$ for some $\theta\in\Theta$ and initial
  distribution $\eta$ on $\Zset$.
\end{definition}
We denote, with a slight abuse of notation, by $\chunk y1n\mapsto\py{\theta}{\eta}{\chunk y1n}$
the density of $\chunk Y1n$ with respect to $\nu^{\tensprod n}$ under
$\PP_\eta^\theta$, i.e.,
$$
 \py{\theta}{\eta}{\chunk y1n} \eqdef \int
\Td[\theta](z_0,(x_{1},y_1)) \prod_{\ell = 1}^{n-1}
\Td[\theta]((x_\ell,y_\ell),(x_{\ell+1},y_{\ell+1})) \,
\Xinit(\rmd z_0) \, \mu^{\tensprod n}(\rmd \chunk x1n) \eqsp.
$$
Now, let $\prior$ be some measure, called the \emph{prior distribution}, on $(\Theta,\mct)$. We will always assume
that $\Theta$ is endowed with some metric $d$ and that $\mct$ is taken to be the corresponding
Borel $\sigma$-field.

Given observations $\chunk{y}{1}{n} \in \Yset^n$, the \emph{posterior distribution}
associated with the initial probability distribution
$\Xinit$ is defined by
\begin{equation}
  \label{eq:post-def-pomc}
\post{\chunk{y}{1}{n}}(A)\eqdef\frac{\int_A \py{\theta}{\Xinit}{\chunk{y}{1}{n}} \, \prior(\rmd
  \theta)}{\int_{\Theta}\py{\theta}{\Xinit}{\chunk{y}{1}{n}} \, \prior(\rmd
  \theta)} \eqsp, \quad \mbox{for all }A\in\mct\eqsp.
\end{equation}
For the numerator and denominator of \eqref{eq:post-def-pomc} to be
well-defined, we will always assume that
$(\theta,z,z')\mapsto\Td[\theta](z,z')$ is measurable on
$\Theta\times\Zset^2$. However, at this point it is not guaranteed that the
ratio itself is well-defined (and does not degenerate into $0/0$ or
$\infty/\infty$). In fact, we will only be interested in the case where
$\post{\chunk{Y}{1}{n}}$ is $\PP\as$ a probability distribution for $n$ large
enough, where $\PP$ denotes the \emph{true distribution} of $\sequence{Y}[n][\nset]$ and
is introduced below.

We always assume the following.
\begin{hyp}{B}
\item \label{ass:stat:dist}
For all $\theta\in\Theta$, the Markov transition kernel $\Tk[\theta]$ has a unique
stationary distribution $\pi_\theta$.
\end{hyp}
Under \refhyp[B]{ass:stat:dist} it is typically assumed that the law of the
observations is given by $\PP=\PP^\thv_{\pi_\thv}$ for some distinguished
parameter $\thv \in \Theta$ interpreted as the \emph{true} parameter (which is
not known a priori). We proceed similarly and set $\piv=\pi_{\thv}$. However,
in the present paper we will also consider the more general case where
$\PP=\PP^\thv_{\Xinitv}$ for some possibly unknown initial distribution
$\Xinitv\neq\piv$, and since the initial distribution $\Xinit$ appearing
in~\eqref{eq:post-def-pomc} is designed arbitrarily by the user, we cannot
assume that $\Xinit=\Xinitv$. See also \Cref{rem:merging} for further comments concerning this. 
\begin{remark}
Under \refhyp[B]{ass:stat:dist}, since $\Tk[\theta]$ is
dominated by $\mu \tensprod \nu$, the stationary probability measure $\pi_\theta$ is
also dominated by $\mu \tensprod \nu$, and by abuse of notation, we still denote
by $\pi_\theta$ the associated density.
\end{remark}

\subsection{Main results}
\label{sec:main:results}
We now state the main results of this contribution, which consist in providing
general sufficient conditions for the posterior consistency
$$
\PP\lr{\post{\chunk{Y}{1}{n}} \weakconv{n \to \infty} \delta_\thv}=1\eqsp,
$$
where $\basicwc$ denotes weak convergence and  $\delta_\theta$ denotes a Dirac point
mass located at $\theta$.
The proof of this result is based on basically two main ingredients. The first is to
ensure that only parameters $\theta$ close to $\thv$ have a large likelihood
as the number $n$ of observations tends to infinity.
This is formalized by the following assumption.
\begin{hyp}{B}
  \item\label{ass:amle:pomc} If $K \in \mct$ is a compact set
    containing $\thv$, then all $K$-valued, $\sequence{\mcf}[n][\nset]$-adapted random sequences $\sequence{\hat \theta}[n][\nsetpos]$ such that for all $n \in \nsetpos$, 
$$
n^{-1} \log \py{\hat{\theta}_n}{\eta}{\chunk Y1n}
\geq n^{-1} \log \py{\thv}{\piv}{\chunk Y1n}  +\epsilon_n \eqsp\mbox{,} 
$$
with
$$
\lim_{n\to\infty}\epsilon_n=0\mbox{,} \quad\PP^\thv_{\piv}\mbox{-a.s.,}
$$
converges $\PP^\thv_{\piv}\as$ to $\thv$. 
\end{hyp}
For identifiable models, this assumption follows directly from standard
consistency results on the maximum likelihood estimator for ergodic partially
observed Markov chains; see \cite{douc-roueff-sim2015b} and the references
therein. In other words, it does not require a specific treatment from a
Bayesian point of view.

\begin{remark}
  Note that in the case where $\Theta$ is compact, \refhyp[B]{ass:amle:pomc} only needs to be checked for $K=\Theta$.
\end{remark}

The second ingredient is to ensure that the prior distribution does not concentrate
around parameters whose likelihood is too small asymptotically.
We provide below sufficient conditions that are easily checked in two specific
situations.

\paragraph{The fully dominated case.}

For all $\theta \in \Theta$ we set
\begin{equation}
  \label{eq:defDelta}
  \D(\thv, \theta) \eqdef 
  \PE^\thv_\piv\left[\kullback{\Tk[\thv](Z_0,\cdot)}{\Tk[\theta](Z_0,\cdot)}\right]\in[0,\infty]\eqsp,
\end{equation}
where for any two probability measures $P$ and $Q$ defined on the same probability
  space, $\kullback{P}{Q}$ denotes the KLD 
  of $Q$ from
  $P$ defined by
$$
\kullback{P}{Q} \eqdef
\begin{cases}
\int \log \frac{\rmd P}{\rmd Q} \, \rmd P\eqsp, & \mbox{if $P \ll Q$}\eqsp, \\
\infty\eqsp, & \mbox{otherwise}\eqsp.
\end{cases}
$$

\begin{theorem} \label{thm:main:result}
Consider a \fdPOMM\ satisfying~\refhyp[B]{ass:stat:dist}[ass:amle:pomc] with a compact parameter space $\Theta$.
Let $\prior$ be a finite measure on $\Theta$ and $\Xinit$ an arbitrary
distribution on $(\Zset,\Zsigma)$.
Then the conditions
\begin{hyp}{B}
\item \label{ass:q:positive} for all $\theta \in \Theta$,  $\Td[\theta](z,z')>0$ $\mu \tensprod \nu$\mbox{-a.s.},
\end{hyp}
\begin{hyp}{B}
\item \label{ass:kullback}
for all $\delta>0$,
$\displaystyle
\prior\left(\ens{\theta\in \Theta}{\D(\thv, \theta)\leq\delta}\right) >0$,
\end{hyp}
imply that for all initial distributions $\Xinitv$ on $(\Zset, \Zsigma)$,
$$
\PP^\thv_\Xinitv\lr{\post{\chunk{Y}{1}{n}}  \weakconv{n \to \infty} \delta_\thv}=1\eqsp.
$$
\end{theorem}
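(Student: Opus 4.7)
The plan is to follow the roadmap described in the introduction, specialized to the compact setting. First, by the Portmanteau lemma applied to the decreasing family of closed sets $A_p = \ens{\theta \in \Theta}{d(\theta,\thv) \geq 1/p}$, $p \in \nsetpos$, the weak convergence $\post{\chunk{Y}{1}{n}} \weakconv{n\to\infty} \delta_\thv$ is equivalent to $\post{\chunk{Y}{1}{n}}(A_p) \to 0$ for every $p$. Each $A_p$ is a compact subset of $\Theta$ not containing $\thv$, so it suffices to show that for any closed set $A \subseteq \Theta$ with $\thv \notin A$, $\post{\chunk{Y}{1}{n}}(A) \to 0$ $\PP^{\thv}_{\Xinitv}$-almost surely. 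Dividing numerator and denominator of~\eqref{eq:post-def-pomc} by $\py{\thv}{\piv}{\chunk{Y}{1}{n}}$ (positive $\PP^{\thv}_{\piv}$-a.s.\ under \refhyp[B]{ass:q:positive}), the problem reduces to establishing, $\PP^{\thv}_{\piv}$-a.s., the two bounds \emph{(a)} $\limsup_n n^{-1} \log N_n(A) < 0$ (remoteness of $A$) and \emph{(b)} $\liminf_n n^{-1} \log D_n \geq -\delta$ for every $\delta>0$ (asymptotic merging), where
$$
N_n(A) = \int_A \frac{\py{\theta}{\Xinit}{\chunk{Y}{1}{n}}}{\py{\thv}{\piv}{\chunk{Y}{1}{n}}}\,\prior(\rmd\theta) \quad\text{and}\quad D_n = N_n(\Theta).
$$
Choosing $\delta$ strictly smaller than the exponential decay rate of $N_n(A)$ then forces $\post{\chunk{Y}{1}{n}}(A) = N_n(A)/D_n \to 0$.

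For \emph{(a)}, I would apply Proposition~\ref{prop:equiv:AMLE} with $K = \Theta$: under \refhyp[B]{ass:amle:pomc}, every sequence of approximate MLEs valued in $K$ is strongly consistent, and the proposition converts this into the $\PP^{\thv}_{\piv}$-remoteness of any closed $A \not\ni \thv$, yielding the required exponential decay of $N_n(A)$. For \emph{(b)}, I would use the instrumental inequality~\eqref{eq:intro:image:bound} (formalized in Lemma~\ref{lem:image-density-lemma}) to lower bound the log-ratio of observed-data densities by the log-ratio of complete-data densities. Under the stationary law $\PP^{\thv}_{\piv}$, Birkhoff's ergodic theorem applied to the additive functional $\log(\Td[\thv]/\Td[\theta])$ along the canonical chain $\sequence{Z}[n][\nset]$ yields that this complete-data log-ratio converges a.s.\ to $-\D(\thv,\theta)$. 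Given $\delta > 0$, \refhyp[B]{ass:kullback} provides a set $\Theta_\delta = \ens{\theta \in \Theta}{\D(\thv,\theta) \leq \delta}$ with $\prior(\Theta_\delta) > 0$; a Fubini--Fatou exchange applied to $D_n$ restricted to $\Theta_\delta$ then lifts the pointwise lower bound into $\liminf_n n^{-1}\log D_n \geq -\delta$.

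The above establishes the claim under the stationary truth $\PP^{\thv}_{\piv}$. To upgrade the conclusion to an arbitrary initial distribution $\Xinitv$, I would invoke Lemma~\ref{lem:radon}: under \refhyp[B]{ass:q:positive}, the distribution of $\sequence{Y}[n][\nsetpos]$ under $\PP^{\thv}_{\Xinitv}$ admits a positive density with respect to its distribution under $\PP^{\thv}_{\piv}$, so the $\PP^{\thv}_{\piv}$-almost sure convergence transfers to a $\PP^{\thv}_{\Xinitv}$-almost sure convergence. I expect the main technical obstacle to be the asymptotic merging step~\emph{(b)}: the inequality~\eqref{eq:intro:image:bound} is indispensable, since a direct analysis of $n^{-1}\log \py{\theta}{\Xinit}{\chunk{Y}{1}{n}}/\py{\thv}{\piv}{\chunk{Y}{1}{n}}$ would require controlling the ergodic limits of the conditional densities $p_\theta(Y_0\mid Y_{-\infty:-1})$, which are not accessible in this generality.
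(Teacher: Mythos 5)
Your proposal is correct and follows essentially the same route as the paper's own proof, which packages your steps into \Cref{prop:preliminaries} (Portmanteau reduction, remoteness of closed sets missing $\thv$ via \Cref{prop:equiv:AMLE}, denominator lower bound via \refhyp[A]{ass:general}) together with \Cref{lem:radon,lem:check:A1}. The only step you leave unflagged is the merging condition~\eqref{eq:merge-at-thetastar} between $\py{\thv}{\Xinit}{\chunk Y1n}$ and $\py{\thv}{\piv}{\chunk Y1n}$, a standing hypothesis of \Cref{prop:equiv:AMLE} that is needed because the user-chosen $\Xinit$ differs from $\piv$; it is supplied by the same \Cref{lem:radon} you invoke for the final transfer, through the martingale identity~\eqref{eq:pomm:equiv:n-finite}.
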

The proof of this result can be found in \Cref{sec:proof-crefthm:main}.

\paragraph{An alternative set of conditions for HMMs.}
The HMMs can be viewed as a
subclass of the {\fdPOMM}s defined by the following assumption.
\begin{hyp}{C}
\item \label{ass:fully:dominated:restricted}
For all $\theta \in \Theta$, $z = (x,y) \in \Zset$ and $z = (x',y') \in \Zset$,
$$
\Td[\theta](z, z')=\Tdx[\theta](x,x')\Tdy[\theta](x',y')\eqsp\text{,}
$$
where $\Tdx[\theta]$ and $\Tdy[\theta]$ are kernel densities on
$\Xset\times\Xset$ and $\Xset\times\Yset$, respectively.
\end{hyp}
Under \ \refhyp[C]{ass:fully:dominated:restricted} \ we denote by
$\Tx[\theta]$ the Markov transition kernel on $(\Xset,\Xsigma)$ associated with
the transition density $\Tdx[\theta]$. In this subclass of models it may happen
that the positiveness condition \refhyp[B]{ass:q:positive} does not hold, but
only since $\Tdx[\theta]$ vanishes. In this case, we rely on the following
weaker assumption.
\begin{hyp}{C}
\item \label{ass:g:positive} For all $\theta \in \Theta$ and $x\in\Xset$,  $\Tdy[\theta](x,\cdot)>0$ $\nu$\mbox{-a.s.}
\end{hyp}
In this context, we define
\begin{equation}
  \label{eq:def-dhmm}
\Dhmm(\thv,\theta) \eqdef \int
\kullback{\Tdy[\thv](x,\cdot)}{\Tdy[\theta](x',\cdot)} \,
\piv(\rmd x\times\Yset) \, \pi_\theta(\rmd x'\times\Yset)
\end{equation}
and consider the following assumption replacing \refhyp[B]{ass:kullback}.
\begin{hyp}{C}
\item \label{ass:kullback:hmm}
For all $\delta>0$,
$\prior(\{ \theta \in \Theta : \Dhmm(\thv, \theta) \leq \delta \}) >0$.
\end{hyp}
Finally, we impose the following condition.
\begin{hyp}{C}
\item \label{ass:TVforgetting} For all $\theta \in \Theta$ and all initial
  distributions $\Xinit$ on $(\Xset,\Xsigma)$, $(\Xinit\Tx[\theta]^n)_{n \in \nsetpos}$ converges to the
  first marginal of $\pi_\theta$ in the total variation norm, i.e.,
$$
\lim_{n \to \infty}\|\Xinit\Tx[\theta]^n -\pi_\theta(\cdot\times\Yset)\|_{\mathrm{TV}}=0\eqsp.
$$
\end{hyp}
The kernel notation in~\refhyp[C]{ass:TVforgetting} is standard and described in detail in Section~\ref{sec:kernel:notation}.

We can now state the following result, whose proof can be found in \Cref{sec:proof-crefthm:hmm}.
\begin{theorem}\label{thm:hmm}
  Theorem~\ref{thm:main:result} holds still true when~\refhyp[B]{ass:q:positive}[ass:kullback]
  are replaced
  by~\refhyp[C]{ass:fully:dominated:restricted}[ass:TVforgetting].
\end{theorem}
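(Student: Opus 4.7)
The plan is to mirror the proof of Theorem~\ref{thm:main:result}: reduce posterior consistency to (a) the $\PP$-remoteness of closed sets bounded away from $\thv$, and (b) an asymptotic merging property ensuring that the prior charges parameters whose log-likelihood ratio against $\thv$ does not decay too fast. Step (a) carries over verbatim under the new hypotheses: the key input is Proposition~\ref{prop:equiv:AMLE}, which turns \refhyp[B]{ass:amle:pomc} into $\PP$-remoteness of every compact set not containing $\thv$, and this step uses neither \refhyp[B]{ass:q:positive} nor \refhyp[B]{ass:kullback}. So the main task is to redo step (b) using the HMM-specific assumptions.

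For step (b), the fully-dominated proof applies Lemma~\ref{lem:image-density-lemma} to get $\liminf_n n^{-1} \log (p_\theta/p_{\thv}) \geq \liminf_n n^{-1} \log (\bar p_\theta/\bar p_{\thv})$, then uses Birkhoff in the stationary regime to identify the latter with $-\D(\thv,\theta)$. Under \refhyp[C]{ass:g:positive} only, the hidden-chain density $\Tdx[\theta]$ may vanish where $\Tdx[\thv]$ does not, so $\bar p_\theta/\bar p_{\thv}$ can degenerate and this route breaks. My plan is to apply the more general form of Lemma~\ref{lem:image-density-lemma} with an auxiliary joint construction in which, at each time $i$, the hidden component is sampled independently from $\pi^X_\theta$ under the numerator law and from $\pi^X_{\thv}$ under the denominator law. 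The hidden-chain transition factors then drop out of the log-ratio and only the emission log-ratios $\log[g_\theta(X'_i,Y_i)/g_{\thv}(X_i,Y_i)]$ survive, with $X_i \sim \pi^X_{\thv}$ and $X'_i \sim \pi^X_\theta$ independent. Under stationarity, Birkhoff turns the Cesàro average of these terms into $-\Dhmm(\thv,\theta)$, and combined with \refhyp[C]{ass:kullback:hmm} this delivers the asymptotic merging.

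Since Birkhoff requires stationarity, the derivation above is first carried out under $\PP^{\thv}_{\piv}$ with $\Xinit = \pi_\theta$. To extend the conclusion to arbitrary $\Xinit$ and $\Xinitv$, I would invoke Lemma~\ref{lem:radon:hmm}: under \refhyp[C]{ass:fully:dominated:restricted}, \refhyp[C]{ass:g:positive} and \refhyp[C]{ass:TVforgetting}, the law of $(Y_n)_n$ started from any initial distribution is equivalent to (admits a strictly positive Radon-Nikodym derivative w.r.t.) its stationary counterpart, so any $\PP^{\thv}_{\piv}$-almost-sure property transfers to $\PP^{\thv}_{\Xinitv}$, and the choice of $\Xinit$ in the posterior \eqref{eq:post-def-pomc} is similarly harmless.

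The hard part will be the construction of the auxiliary joint in step (b) and verifying that the image density lemma yields exactly $\Dhmm$ and not some weaker quantity. The subtlety is that the target ergodic average mixes \emph{independent} stationary streams $X \sim \pi^X_{\thv}$ and $X' \sim \pi^X_\theta$, which are not marginals of the actual coupled trajectory; bridging this requires \refhyp[C]{ass:fully:dominated:restricted} to factor the joint together with \refhyp[C]{ass:TVforgetting} to asymptotically decorrelate the hidden sequences. Once this key bound is secured, the remainder of the proof assembles exactly as in the compact fully-dominated case treated in Theorem~\ref{thm:main:result}.
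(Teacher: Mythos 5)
Your overall architecture coincides with the paper's: posterior consistency is obtained from \Cref{prop:preliminaries}, whose hypotheses are verified by proving HMM analogues of \Cref{lem:radon} and \Cref{lem:check:A1} (namely \Cref{lem:radon:hmm} and \Cref{lem:check:A1:hmm}), and the asymptotic merging step is indeed handled by an augmented-data application of \Cref{lem:image-density-lemma} in which the hidden-chain transition densities cancel, leaving only emission log-ratios whose Birkhoff average is $-\Dhmm(\thv,\theta)$. One caveat on your step (a): \Cref{prop:equiv:AMLE} requires the merging property \eqref{eq:merge-at-thetastar} between $\py{\thv}{\Xinit}{\chunk Y1n}$ and $\py{\thv}{\piv}{\chunk Y1n}$, which is precisely what \refhyp[B]{ass:pomm:equiv} (hence \Cref{lem:radon:hmm}) delivers; so the remoteness step does not carry over \emph{verbatim} but already needs the equivalence-of-laws input you only invoke later for non-stationarity.

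The one place where your plan, read literally, would fail is the auxiliary construction: if the hidden component is sampled i.i.d.\ from $\pix_\theta$ under the numerator law, its restriction to $\sigma(\chunk Y1n)$ becomes the i.i.d.\ product $\prod_{k}\int\Tdy[\theta](x,Y_k)\,\pix_\theta(\rmd x)$ rather than the HMM likelihood $\py{\theta}{\Xinit}{\chunk Y1n}$, and \Cref{lem:image-density-lemma} then bounds the wrong ratio. The paper's \Cref{lem:check:A1:hmm} instead keeps $\sequence{X'}[n][\nset]$ a genuine Markov chain with kernel $\Tx[\theta]$ and initial distribution $\Xinit$, \emph{exactly} independent of $Z=(X,Y)$ under both augmented laws---so no ``asymptotic decorrelation'' is needed, independence being built in---and the transition factors of both hidden chains cancel, yielding the ratio $\prod_{k=1}^n\Tdy[\theta](X'_k,Y_k)/\Tdy[\thv](X_k,Y_k)$. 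The role of \refhyp[C]{ass:TVforgetting} is solely to couple the non-stationary $X'$ to a stationary copy $X''$ with law $\PP^\theta_{\pi_\theta}$ after an a.s.\ finite time $\tau$ (with \refhyp[C]{ass:g:positive} guaranteeing that the finitely many exchanged factors are positive), after which $(X_k,Y_k,X''_k)$ is stationary ergodic (\Cref{lem:ergodic:indep:product}) and Birkhoff gives the limit $-\Dhmm(\thv,\theta)$, since $X''_0\sim\pix_\theta$ is independent of $(X_0,Y_0)\sim\piv$. With this correction your outline matches the paper's proof.
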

\begin{remark}
  \label{rem:iid-kullback}
It is interesting to note that in the case of i.i.d. observations, which
corresponds to the HMM case \refhyp[C]{ass:fully:dominated:restricted} with
$\Tdx[\theta](x,x')$ arbitrary (say equal to 1 with $\mu$ an arbitrary
probability measure) and $\Tdy[\theta](x',y')$ not depending on $x'$,
simply denoted by  $\Tdy[\theta](y')$ hereafter, we have
$$
\D(\thv, \theta)=\Dhmm(\thv,\theta)=\kullback{\Tdy[\thv]}{\Tdy[\theta]}\;.
$$
Hence \refhyp[B]{ass:kullback} and \refhyp[C]{ass:kullback:hmm} boil down to
the well known condition of the i.i.d. setting introduced by
\cite{schwartz:1965}, see also \cite[Eqn.~(1)]{ghosal:1997}.
\end{remark}
\paragraph{Non-compact parameter space.}

Needless to say, a drawback of \Cref{thm:main:result,thm:hmm} is that $\Theta$ is assumed to be compact.
This assumption is standard in the frequentist setting, e.g., when studying the
maximum likelihood estimator, but can be problematic in the Bayesian setting,
where it is often convenient to work with priors defined on non-compact spaces for
computational reasons. We now derive some additional conditions dealing with
the non-compact case.

Define for all $A\in \mct$, $n \in \nsetpos$ and $\chunk{y}{0}{n} \in \Yset^{n+1}$,
\begin{equation}
  \label{eq:psup-def}
\psup[A]{\chunk{y}{0}{n}} \eqdef \sup_{(\theta,x_0) \in A \times \Xset} \int \prod_{\ell = 0}^{n-1} \Td[\theta](z_\ell,z_{\ell+1}) \, \mu^{\tensprod n}(\rmd \chunk{x}{1}{n})
\end{equation}
(with $z_\ell=(x_\ell,y_\ell)$).
Now the following result holds true also for a possibly non-compact $\Theta$.
\begin{theorem}\label{thm:noncompact}
  Consider a \fdPOMM\ satisfying~\refhyp[B]{ass:stat:dist} and~\refhyp[B]{ass:amle:pomc}. Let $\prior$ be a Radon measure on $\Theta$ and $\Xinit$ an arbitrary distribution on
  $(\Zset,\Zsigma)$. In addition, suppose that the following conditions hold true.
\begin{hyp}{B}
\item\label{ass:tight:bound} There exist $\ell \in\nsetpos$ and a
  non-decreasing sequence $\sequence{C}[m][\nset]$ of compact sets in $\mct$ such that
\begin{align}
& \limsup_{m \to \infty}\psup[C_m^c]{\chunk{Y}{0}{\ell}}=0 \quad \PP^\thv_\piv\mbox{-a.s.,} \label{eq:conv:zero}\\
& \PE^\thv_\piv[\log^+\psup[\Theta]{\chunk{Y}{0}{\ell}}] <\infty\eqsp. \label{eq:log:moment}
\end{align}
\end{hyp}
\begin{hyp}{B}
\item \label{ass:additive}
There exists $n_0 \in \nsetpos$ such that
\begin{align}\label{eq:B6-1}
& \int \prior(\rmd \theta) \, \py{\theta}{\Xinit}{\chunk{Y}{1}{n_0}} <\infty \quad \PP^\thv_\piv\mbox{-a.s.,}  \\
\label{eq:B6-2}
&\PE^\thv_\piv[\log \py{\thv}{\piv}{Y_{n_0} \mid \chunk{Y}{1}{n_0-1}}]>-\infty\eqsp .
\end{align}
\end{hyp}
Then \refhyp[B]{ass:q:positive}[ass:kullback] or
\refhyp[C]{ass:fully:dominated:restricted}[ass:TVforgetting] imply that for all initial distributions
  $\Xinitv$ on $(\Zset,\Zsigma)$,
$$
\PP^\thv_\Xinitv\lr{\post{\chunk{Y}{1}{n}}  \weakconv{n \to \infty} \delta_\thv}=1\eqsp.
$$
\end{theorem}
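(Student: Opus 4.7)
The plan is to reduce to the compact-parameter setting already handled by \Cref{thm:main:result,thm:hmm} by establishing tightness of the posterior distributions along the sequence of compact subsets $\sequence{C}[m][\nset]$. First, since \Cref{lem:radon} (under the fully dominated hypotheses \refhyp[B]{ass:q:positive}[ass:kullback]) and \Cref{lem:radon:hmm} (under the HMM hypotheses \refhyp[C]{ass:fully:dominated:restricted}[ass:TVforgetting]) guarantee that the law of $(Y_n)_{n\ge 1}$ under $\PP^\thv_{\Xinitv}$ is absolutely continuous with respect to its law under $\PP^\thv_{\piv}$ with a positive Radon--Nikodym density, every $\PP^\thv_{\piv}$-a.s.\ conclusion automatically transfers to a $\PP^\thv_{\Xinitv}$-a.s.\ one, so I may work throughout under the stationary law $\PP^\thv_{\piv}$.

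The core step is to prove
\[
\lim_{m\to\infty}\limsup_{n\to\infty}\post{\chunk Y1n}(C_m^c)=0, \quad \PP^\thv_{\piv}\mbox{-a.s.}
\]
For the numerator, I iterate the Markov property of $(Z_n)$ and pull the supremum over intermediate state variables out of each block of length $\ell$ to obtain, for every $\theta\in C_m^c$ and $n=n_0+K\ell$, the bound
\[
\py{\theta}{\Xinit}{\chunk Y1n}\le \py{\theta}{\Xinit}{\chunk Y1{n_0}}\prod_{k=0}^{K-1}\psup[C_m^c]{\chunk Y{n_0+k\ell}{n_0+(k+1)\ell}}.
\]
Integrating against $\prior$, the prefix factor is controlled by~\refhyp[B]{ass:additive}\eqref{eq:B6-1}, and Birkhoff's ergodic theorem applied to the stationary shift-sequence $(\log\psup[C_m^c]{\chunk Y{k\ell}{(k+1)\ell}})_k$ (with positive part integrable by~\eqref{eq:log:moment}) then yields
\[
\limsup_n \frac{1}{n}\log\int_{C_m^c}\py{\theta}{\Xinit}{\chunk Y1n}\,\prior(\rmd\theta)\le \frac{1}{\ell}\,\PE^\thv_{\piv}\bigl[\log\psup[C_m^c]{\chunk Y0\ell}\bigr],
\]
which tends to $-\infty$ as $m\to\infty$ by~\eqref{eq:conv:zero} and monotone convergence. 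For the denominator, \refhyp[B]{ass:kullback} (respectively \refhyp[C]{ass:kullback:hmm} in the HMM case), combined with \Cref{lem:image-density-lemma}, supplies for each $\delta>0$ a prior-positive set on which $n^{-1}\log[\py{\theta}{\Xinit}{\chunk Y1n}/\py{\thv}{\piv}{\chunk Y1n}]\ge-\delta$ eventually; together with~\refhyp[B]{ass:additive}\eqref{eq:B6-2}, which prevents $n^{-1}\log\py{\thv}{\piv}{\chunk Y1n}$ from collapsing, this gives a matching lower bound and hence tightness at an exponential rate.

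With tightness in hand, I would conclude via the Portmanteau lemma. For any closed $F\subseteq\Theta$ with $\thv\notin F$, write $F=(F\cap C_m)\cup(F\cap C_m^c)$. The second piece vanishes by tightness as $m\to\infty$. The first is a compact set not containing $\thv$, on which \Cref{prop:equiv:AMLE} (whose hypothesis is precisely~\refhyp[B]{ass:amle:pomc}) yields $\PP^\thv_{\piv}$-remoteness of $F\cap C_m$ from $\thv$; combining with the same asymptotic-merging lower bound on the denominator gives $\post{\chunk Y1n}(F\cap C_m)\to 0$ a.s., whence $\post{\chunk Y1n}(F)\to 0$ and $\post{\chunk Y1n}\weakconv{}\delta_{\thv}$.

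The main obstacle I foresee is the tightness step, particularly the block decomposition of the likelihood: one must iterate the Markov property so that the $\psup[C_m^c]$ factors emerge cleanly, treat the boundary effects at the initial $n_0$-block and the possibly incomplete final block, and carefully match the exponential numerator rate against the asymptotic-merging denominator bound, the latter itself requiring the delicate liminf passage of \Cref{lem:image-density-lemma} from KL-closeness of transition densities to merging of full observed-likelihood ratios.
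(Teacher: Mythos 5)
Your overall architecture matches the paper's: transfer from $\PP^\thv_{\Xinitv}$ to $\PP^\thv_{\piv}$ via \Cref{lem:radon} or \Cref{lem:radon:hmm}, establish exponential tightness of the posterior along the compacts $C_m$, and then conclude through approximate $\PP$-remoteness of closed sets missing $\thv$ (via \Cref{prop:equiv:AMLE} under \refhyp[B]{ass:amle:pomc}) and the Portmanteau argument of \Cref{rem:post:const} --- this is exactly the route of \Cref{prop:preliminaries} combined with \Cref{prop:hors:compact}. Where you diverge is in the implementation of the tightness step, and there are two soft spots there. First, for the numerator you block the likelihood into length-$\ell$ pieces and apply Birkhoff to the $\ell$-skeleton $(\log\psup[C_m^c]{\chunk{Y}{k\ell}{(k+1)\ell}})_k$; but the $\ell$-fold shift of an ergodic stationary process need not be ergodic, so Birkhoff only gives convergence to a conditional expectation given the invariant $\sigma$-field of the $\ell$-shift, not to the plain expectation you write. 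The paper sidesteps this entirely by observing that $W_{r,s}=\psup[K^c]{\chunk{Y}{r}{s}}$ is submultiplicative, $W_{r,t}\le W_{r,s}W_{s,t}$, and invoking Kingman's subadditive theorem for the one-step (ergodic) shift, which yields the deterministic limit $\inf_n n^{-1}\PE^\thv_\piv[\log W_{0,n}]\le\ell^{-1}\PE^\thv_\piv[\log\psup[K^c]{\chunk{Y}{0}{\ell}}]$ directly (\Cref{lem:tight:tech}); your version can be patched by conditional Fatou or by averaging over the $\ell$ phases, but as written it is imprecise. Second, your claim that \eqref{eq:B6-2} ``prevents $n^{-1}\log\py{\thv}{\piv}{\chunk{Y}{1}{n}}$ from collapsing'' hides the real content: one needs the a.s.\ convergence of $n^{-1}\log\py{\thv}{\piv}{\chunk{Y}{1}{n}}$ to the limit $\tilde V=\lim_n\PE^\thv_\piv[\log\py{\thv}{\piv}{Y_{n}\mid\chunk{Y}{1}{n-1}}]$, which is Barron's generalized Shannon--McMillan--Breiman theorem, together with the monotonicity of these conditional entropies (so that \eqref{eq:B6-2} at the single index $n_0$ forces $\tilde V>-\infty$). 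Neither point is fatal, but both require the specific tools the paper deploys; you should also say a word on why $\int_\Theta\py{\theta}{\Xinit}{\chunk{Y}{1}{n}}\,\prior(\rmd\theta)$ is finite for all $n\ge n_0$ (the content of \Cref{lemma:noncompactcond-n0-all-n}), since with a Radon, possibly infinite, prior the posterior being a well-defined probability is not automatic.
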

The proof is postponed to \Cref{sec:proof-crefthm:tightness}. In~\eqref{eq:B6-2}, $\py{\thv}{\piv}{Y_{n_0} \mid \chunk{Y}{1}{n_0-1}}$ is, as usual, defined as the ratio
$\py{\thv}{\piv}{\chunk{Y}{1}{n_0}}/\py{\thv}{\piv}{\chunk{Y}{1}{n_0-1}}$,
with the convention that the denominator is unity if $n_0=1$. Note that this
ratio is always well defined under \refhyp[B]{ass:q:positive} or
\refhyp[C]{ass:fully:dominated:restricted}[ass:g:positive].
\begin{remark}\label{rem:B6-n0-n}
  It is interesting to observe that, as detailed in
  \Cref{lemma:noncompactcond-n0-all-n}, each condition in
  \refhyp[B]{ass:additive} implies the same condition with $n_0$ replaced by
  any $n \in \nset$ larger than $n_0$. It is therefore sufficient to check the conditions independently with
  two possibly different $n_0$. The fact that~\eqref{eq:B6-1} holds for all
  $n\geq n_0$ is of particular interest, since it guaranties that both the
  numerator and denominator in the definition of the posterior
  $\post{\chunk{Y}{1}{n}}$ in~\eqref{eq:post-def-pomc} are finite. On the other
  hand, by \refhyp[B]{ass:q:positive} or
  \refhyp[C]{ass:fully:dominated:restricted}[ass:g:positive] the
  denominator is positive. Hence, if~\eqref{eq:B6-1} holds, 
  the posterior $\post{\chunk{Y}{1}{n}}$ is well defined as a
  probability distribution for $n$ large
  enough.
\end{remark}

\section{Examples}
\label{sec:examples}

\subsection{Partially observed Gaussian linear Markov model}

First, we consider a linear Gaussian \fdPOMM\ defined on $\Zset=\rset^p\times\rset^q$ by
\begin{align}
Z_{k+1}=\Phi_\theta Z_k+ \epsilon_{k+1}\eqsp, \label{eq:poglm}
\end{align}
where $\Phi_\theta$ is $(p+q)\times(p+q)$ matrix and $\sequence{\epsilon}[n][\nsetpos]$ is a
sequence of i.i.d. centered Gaussian vectors with $(p+q)\times(p+q)$ covariance
matrix $R_\theta$.  In the following we assume that $\Theta$ is a compact subset of $\rset^d$
and that for all $\theta\in\Theta$, $\Phi_\theta$ has spectral radius strictly
less than unity and $R_\theta$ is positive definite. Then
$\sequence{Z}[n][\nset]$ is a vector auto-regressive process with transition density  
$$
q_\theta(z_0,z_1)=(2\pi)^{-(p+q)/2} (\mathrm{det}(R_\theta))^{-1}\exp \left( -\frac12(z_1-\Phi_\theta z_0)^\intercal R_\theta^{-1}(z_1-\Phi_\theta z_0) \right),
$$
with $(z_0, z_1) \in \Zset^2$, satisfying {\refhyp[B]{ass:stat:dist}}. This framework includes 
the widespread \emph{linear Gaussian state-space model}
\begin{equation} \label{eq:glspm}
\begin{split}
X_{k+1}&=A_\theta X_k+\zeta_{k+1}, \\
Y_k&=B_\theta X_k + \xi_k,
\end{split}
\quad k \in \nset,
\end{equation}
corresponding to
$$
\Phi_\theta=\begin{bmatrix}A_\theta&0\\B_\theta A_\theta&0\end{bmatrix}\quad\text{and}\quad
\epsilon_k=\begin{bmatrix}\zeta_{k}\\ B_\theta\zeta_{k}+\xi_k\end{bmatrix}.
$$
Note that the model \eqref{eq:glspm} is an HMM only if $\zeta_k$ and $\xi_k$
are uncorrelated for all $k$, which is not assumed in the model~\eqref{eq:poglm}.
 Assumption \refhyp[B]{ass:q:positive} is trivially satisfied.
The expected KLD in~\eqref{eq:defDelta} is easily computed; indeed, for all
$\thv=(\Phi_*,R_*)$ and $\theta=(\Phi,R)$ in $\Theta$,
\begin{multline*}
\D(\thv, \theta) = \frac12 \left[ \operatorname{tr}(R^{-1}R_*)-p-q-\log\mathrm{det}(R^{-1}R_*) \right. \\
\left. +\mathrm{tr}(R^{-1}(\Phi-\Phi_*)^\intercal (\Phi-\Phi_*) \Gamma_*) \right] ,
\end{multline*}
where
$$
\Gamma_* \eqdef \PE^\thv_\piv\left[Z_0 Z_0^\intercal \right] = \sum_{k = 0}^\infty \Phi_*^k R(\Phi_*^\intercal)^k.
$$
It follows that $\theta\mapsto\D(\thv, \theta)$ is continuous at $\thv$ (where
it always vanishes) and thus that {\refhyp[B]{ass:kullback}} is satisfied whenever
$\prior$ is strictly positive on $\Theta$ (i.e., $\prior(A) > 0$ for every
non-empty open set $A$). 
\Cref{thm:main:result} hence applies as soon as Assumption~\refhyp[B]{ass:amle:pomc} holds true (examples are treated in, e.g., \cite{catalot:laredo:2006} or \cite[Section~3.3]{douc:moulines:olsson:vanhandel:2011}). 

\subsection{General HMMs}
In this section, we consider the case of HMMs, i.e., we assume
\refhyp[C]{ass:fully:dominated:restricted}. Up to our knowledge, the following set of assumptions, which are borrowed from \cite[Theorem~1]{douc:moulines:olsson:vanhandel:2011}, are the weakest available for obtaining the strong consistency of the
(approximate) MLE for well-specified HMMs.
\begin{hyp}{D}
\item \label{ass:posit:Harris}
For all $\theta \in \Theta$, the Markov kernel $\Tx[\theta]$ is aperiodic positive Harris recurrent.
\end{hyp}
Note that under \refhyp[C]{ass:fully:dominated:restricted}, this assumption is
sufficient for \refhyp[B]{ass:stat:dist}, i.e., the existence of a unique stationary probability 
measure for each complete chain kernel $\Tk[\theta]$, $\theta\in\Theta$.
\begin{hyp}{D}
\item \label{ass:kernel-likelihood}
$\PE_\piv^\thv[\sup_{x\in\Xset}\,(\log \Tdy[\thv](x,Y_0))^+]<\infty$,
$\PE_\piv^\thv\lrb{\left|\log\int \Tdy[\thv](x,Y_0)\,\pi_\thv(\rmd x)\right|}<\infty$.
\end{hyp}

\begin{hyp}{D}
\item \label{ass:localBound}
For all $\theta\neq\thv$, there are a neighborhood $\mcu_\theta$ of
$\theta$ such that
$$
	\sup_{\thetapr\in\mcu_\theta}\sup_{(x,x') \in \Xset^2}\Tdx[\thetapr](x,x')<\infty\eqsp\mbox{,}
	\qquad
	\PE_\piv^\thv\lrb{\sup_{\thetapr \in \mcu_\theta}
	\sup_{x\in\Xset} (\log \Tdy[\thetapr](x,Y_0))^+}
	<\infty 
$$
and an integer $r_\theta$ such that
$$
	\PE_\piv^\thv\lrb{\sup_{\thetapr \in \mcu_\theta}
	(\log \py{\thetapr}{\Xinit}{\chunk{Y}{1}{r_\theta}})^+}
	<\infty\eqsp.
$$
\end{hyp}

\begin{hyp}{D}
\item \label{ass:localUnifInteg}
For all $\theta \neq \thv$ and $n \geq r_\theta$, the function
$\thetapr \mapsto \py{\thetapr}{\Xinit}{\chunk{Y}{1}{n}}$ is
upper-semicontinuous at $\theta$, $\PP_\piv^\thv$-a.s.
\end{hyp}

\begin{hyp}{D}
\item \label{ass:mixing}
For all $\theta \neq \thv$ such that
$\py{\theta}{\Xinit}{\chunk{Y}{1}{r_\theta}}>0$ $\PP_\piv^\thv$-a.s., we have
$$
	\liminf_{n\to\infty}\PP_\piv^\thv(\chunk Y1n\in A_n)>0\eqsp\mbox{,}\qquad
	\limsup_{n\to\infty}n^{-1}\log\PP_{\Xinit}^\theta (\chunk Y1n\in A_n)<0	
$$
for some sequence of sets $A_n\in\Ysigma^{\tensprod(n+1)}$.
\end{hyp}
As a consequence of \Cref{thm:main:result,thm:hmm}, the only
requirement on the prior $\prior$ in the case of general HMMs with 
a compact parameter space is given by \refhyp[B]{ass:kullback} or \refhyp[C]{ass:kullback:hmm}, 
depending on the positivity assumption on the kernel densities
(\refhyp[B]{ass:q:positive} or  \refhyp[C]{ass:g:positive}).

\begin{theorem}
\label{thm:consist:general:hmm}
Assume \refhyp[C]{ass:fully:dominated:restricted} and
\refhyp[D]{ass:posit:Harris}[ass:mixing] with a compact parameter space
$\Theta$. If \refhyp[B]{ass:q:positive}[ass:kullback] or
\refhyp[C]{ass:g:positive}[ass:kullback:hmm] hold, then, for all initial
distributions $\Xinitv$ on $(\Zset, \Zsigma)$,
$$
\PP^\thv_\Xinitv\lr{\post{\chunk{Y}{1}{n}}  \weakconv{n \to \infty} \delta_\thv}=1\eqsp.
$$
\end{theorem}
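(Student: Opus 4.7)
The strategy is to deduce Theorem~\ref{thm:consist:general:hmm} as a direct consequence of Theorem~\ref{thm:main:result} (when \refhyp[B]{ass:q:positive}--\refhyp[B]{ass:kullback} hold) or of Theorem~\ref{thm:hmm} (when \refhyp[C]{ass:g:positive}--\refhyp[C]{ass:kullback:hmm} hold). To this end, one must check that the compactness of $\Theta$, together with \refhyp[C]{ass:fully:dominated:restricted} and \refhyp[D]{ass:posit:Harris}--\refhyp[D]{ass:mixing}, implies the two structural assumptions \refhyp[B]{ass:stat:dist} and \refhyp[B]{ass:amle:pomc}, as well as the forgetting property \refhyp[C]{ass:TVforgetting} needed in the second branch.

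\textbf{Step 1 (existence of stationary distribution).} Under \refhyp[C]{ass:fully:dominated:restricted} and \refhyp[D]{ass:posit:Harris}, the kernel $\Tx[\theta]$ has a unique invariant probability measure on $\Xset$; coupling it with the emission density $\Tdy[\theta]$ yields a unique invariant distribution $\pi_\theta$ for $\Tk[\theta]$, so that \refhyp[B]{ass:stat:dist} holds. This observation is already recorded in the remark following \refhyp[D]{ass:posit:Harris}.

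\textbf{Step 2 (consistency of the approximate MLE).} The hypotheses \refhyp[D]{ass:posit:Harris}--\refhyp[D]{ass:mixing} are precisely those of \cite[Theorem~1]{douc:moulines:olsson:vanhandel:2011}, which establishes the strong consistency of the maximum likelihood estimator on any compact subset of $\Theta$ containing $\thv$. An inspection of the argument shows that consistency extends to any $\sequence{\mcf}[n][\nset]$-adapted sequence $\sequence{\hat\theta}[n][\nsetpos]$ which approximately maximizes the normalized log-likelihood up to a vanishing slack $\epsilon_n \to 0$, since the proof relies on a uniform strong law for $n^{-1}\log \py{\theta}{\eta}{\chunk Y1n}$ and an upper-semicontinuity/identifiability argument that are insensitive to an additive error tending to zero. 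Taking $K=\Theta$ (which is compact) yields \refhyp[B]{ass:amle:pomc}. This is the main technical point to justify carefully, and it is where the full strength of \refhyp[D]{ass:kernel-likelihood}--\refhyp[D]{ass:mixing} is used.

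\textbf{Step 3 (forgetting in total variation).} When the second branch of hypotheses is in force (\refhyp[C]{ass:g:positive} and \refhyp[C]{ass:kullback:hmm} instead of \refhyp[B]{ass:q:positive}--\refhyp[B]{ass:kullback}), one still has to verify \refhyp[C]{ass:TVforgetting}. This follows from \refhyp[D]{ass:posit:Harris} by the classical total-variation convergence theorem for aperiodic positive Harris recurrent Markov chains: for every initial state $x\in\Xset$, $\|\delta_x\Tx[\theta]^n-\pi_\theta(\cdot\times\Yset)\|_{\mathrm{TV}}\to 0$, and by dominated convergence the convergence persists when integrating against an arbitrary initial distribution $\Xinit$. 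We may then invoke Theorem~\ref{thm:hmm}; in the first branch, we invoke Theorem~\ref{thm:main:result} directly. Either way, the desired posterior consistency follows.

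\textbf{Expected obstacle.} The only non-routine ingredient is Step~2: one must ensure that the cited consistency result for the MLE transfers to \emph{approximate} MLEs in the sense of \refhyp[B]{ass:amle:pomc}. Once this is granted, Steps~1 and~3 are essentially bookkeeping, and the conclusion of Theorem~\ref{thm:consist:general:hmm} reduces to an application of Theorem~\ref{thm:main:result} or Theorem~\ref{thm:hmm}, according to which positivity/Kullback pair is assumed.
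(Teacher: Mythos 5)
Your proposal is correct and follows essentially the same route as the paper: deduce \refhyp[B]{ass:stat:dist} and \refhyp[C]{ass:TVforgetting} from \refhyp[D]{ass:posit:Harris} via the ergodic theorem for aperiodic positive Harris recurrent chains, obtain \refhyp[B]{ass:amle:pomc} from \cite[Theorem~1]{douc:moulines:olsson:vanhandel:2011}, and conclude by Theorem~\ref{thm:main:result} or Theorem~\ref{thm:hmm}. The only difference is that you treat the extension to approximate MLEs as a point needing verification, whereas the cited theorem is already stated for approximate maximizers, so no additional argument is required there.
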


\begin{proof}
  Assumption \refhyp[D]{ass:posit:Harris} implies \refhyp[B]{ass:stat:dist} and
  \refhyp[C]{ass:TVforgetting} (see
  \cite[Theorem~13.0.1]{meyn:tweedie:1993}). Moreover, the strong consistency
  of the AMLE has been proved in \cite[Theorem
  1]{douc:moulines:olsson:vanhandel:2011} under
  \refhyp[D]{ass:posit:Harris}[ass:mixing], showing that
  \refhyp[B]{ass:amle:pomc} is satisfied. Thus, all the assumptions of
  \Cref{thm:main:result} and \Cref{thm:hmm} are satisfied and the proof follows.
\end{proof}

Thus, for HMMs, one may, in order to apply \Cref{thm:consist:general:hmm}, choose to 
check \ {\refhyp[B]{ass:q:positive}[ass:kullback]} or
 {\refhyp[C]{ass:g:positive}[ass:kullback:hmm]}, depending on the model.
Consider for example the nonlinear state-space model on 
$\Xset=\rset^p$,
\begin{align} \label{eq:ARCH}
	X_{k + 1} = T_\theta(X_k) + \Sigma_\theta (X_k) \zeta_{k + 1} \eqsp, \quad k \in \nset, 
\end{align}
where $\theta$ is an $m$-dimensional parameter on a compact space $\Theta
\subset \rset^m$, $(\zeta_k)_{k \in \nsetpos}$ is an i.i.d.\ sequence of
$d$-dimensional random vectors with density $\rho_\zeta$ \wrt\ Lebesgue measure
$\lleb$ and $T_\theta : \rset^p \to \rset^p$ and $\Sigma_\theta : \rset^p \to
\rset^{p \times p}$ are given (measurable) matrix-valued functions.  Conditions
on $T_\theta$, $\sigma_{\theta}$, $\rho_{\theta}$ and $g_\theta$ to ensure
\refhyp[D]{ass:posit:Harris}[ass:mixing] can be found in
\cite[Section~3.3]{douc:moulines:olsson:vanhandel:2011}.  These conditions are
stated in the case where $\rho_\zeta$ is positive over $\rset^d$ but can be
easily adapted to the case where the support of $\rho_\zeta$ is
compact. However, in the latter case, \refhyp[B]{ass:q:positive} does not hold,
and assuming \refhyp[C]{ass:g:positive}, we can rely on
\refhyp[C]{ass:kullback:hmm} as an alternative for \refhyp[B]{ass:kullback}. In
what follows, we explain how to deal with $\pi_\theta$ appearing in the
definition \eqref{eq:def-dhmm} of $\Dhmm(\thv,\theta)$ using standard
conditions.

Assume that there exist a measurable function $V: \Xset \to [1,\infty)$ and
constants $(C,\rho) \in \rsetpos \times (0,1)$ such that for all $n \in \nset$
and all $(x,x') \in \Xset^2$,
\begin{equation}
\label{eq:V-geometric-ergodicity}
\sup_{\theta \in \Theta} \vnorm{\Tx[\theta]^n(x,\cdot)-\Tx[\theta]^n(x',\cdot)}{V} \leq C(V(x)+V(x')) \rho^n\eqsp,
\end{equation}
where for any signed measure $\chi$ on $(\Xset,\Xsigma)$, $\vnorm{\chi}{V}
\eqdef \sup \chi f$, where the supremum is taken over all measurable functions
$f: \Xset \to \rset$ such that $|f|_V\eqdef \mu-\mathrm{esssup}_{x \in \Xset}[ |f(x)|/V(x) ]
\leq 1$ and $\chi f$ denotes the integral of $f$ w.r.t. $\chi$ (see
Section~\ref{sec:kernel:notation} for details).

\begin{proposition}
Assume that \eqref{eq:V-geometric-ergodicity} holds. Moreover, suppose that
\begin{enumerate}[label=(\roman*),resume=Vgeo]
\item \label{item:Vgeo:kullbackassumption} for all $\thv\in\Theta$, there
  exists $C_\star>0$ such that, for $\mu\pp\; x \in \Xset$ and $\mu\pp\;
  x' \in \Xset$,
  $\theta\mapsto\kullback{\Tdy[\thv](x,\cdot)}{\Tdy[\theta](x',\cdot)}$ is
  continuous at $\thv$ and
$$
\sup_{\theta\in\Theta}\kullback{\Tdy[\thv](x,\cdot)}{\Tdy[\theta](x',\cdot)}\leq
C_\star \;V(x)V(x');
$$
\item \label{item:Vgeo:one} there exists a constant $M<\infty$ such that for
  $\mu\pp\; x \in \Xset$, $\sup_{\theta \in \Theta}\Tx[\theta]V(x)\leq M V(x)$;
\item \label{item:Vgeo:two} for $\mu\pp\; x \in \Xset$, $\lim_{\theta \to \thv} \vnorm{\Tx[\theta](x,\cdot)-\Tx[\thv](x,\cdot)}{V} =0$.
\end{enumerate}
Then, \refhyp[C]{ass:kullback:hmm} is satisfied for all strictly positive prior measures $\lambda$ on $(\Theta, \mathcal{T})$.
\end{proposition}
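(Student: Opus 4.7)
The plan is to prove that $\theta\mapsto\Dhmm(\thv,\theta)$ is continuous at $\thv$. Since $\Dhmm(\thv,\thv)$ vanishes in the intended scope of the result (as illustrated by the i.i.d.\ observations case of~\Cref{rem:iid-kullback}), continuity implies that each sublevel set $\{\theta:\Dhmm(\thv,\theta)\leq\delta\}$, $\delta>0$, is an open neighborhood of $\thv$ and therefore carries positive $\lambda$-mass by the strict positivity of $\lambda$, which is exactly~\refhyp[C]{ass:kullback:hmm}. Continuity at $\thv$ rests on two ingredients: the continuity at $\thv$ of the integrand in~\eqref{eq:def-dhmm}, directly supplied by~\ref{item:Vgeo:kullbackassumption}; and a $V$-norm continuity of the stationary $X$-marginal $\theta\mapsto\mu_\theta\eqdef\pi_\theta(\cdot\times\Yset)$ at $\thv$, writing $\mu_\star\eqdef\mu_{\thv}$.

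I would first check that each $\mu_\theta$ has a finite $V$-moment, uniformly in $\theta$. Using the Cauchy-in-$V$-norm form of~\eqref{eq:V-geometric-ergodicity} together with the drift-like bound~\ref{item:Vgeo:one}, for every fixed $x_0\in\Xset$ the sequence $\bigl(\Tx[\theta]^n V(x_0)\bigr)_{n\geq0}$ is Cauchy, its limit equals $\mu_\theta V$, and a $\theta$-uniform bound on this limit follows from~\ref{item:Vgeo:one}. Next I would establish $\mu_\theta\to\mu_\star$ in $V$-norm as $\theta\to\thv$, exploiting the telescoping
\begin{equation*}
\mu_\theta-\mu_\star \;=\; \mu_\theta\bigl(\Tx[\theta]^n-\Tx[\thv]^n\bigr) \;+\; (\mu_\theta-\mu_\star)\Tx[\thv]^n,
\end{equation*}
which follows from the stationarity identities $\mu_\theta\Tx[\theta]^n=\mu_\theta$ and $\mu_\star\Tx[\thv]^n=\mu_\star$. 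I would choose $n$ large first so that the second term is small, by applying the $V$-geometric ergodicity of $\Tx[\thv]$ to the $V$-integrable signed measure $\mu_\theta-\mu_\star$, and then take $\theta$ sufficiently close to $\thv$ to make the first term small by iterating~\ref{item:Vgeo:two} and controlling the iterated perturbation via~\ref{item:Vgeo:one}.

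Finally, the continuity of $\Dhmm(\thv,\cdot)$ at $\thv$ follows from a dominated-convergence argument applied to the double integral in~\eqref{eq:def-dhmm}: the integrand is pointwise continuous at $\thv$ by~\ref{item:Vgeo:kullbackassumption} and bounded in absolute value by $C_\star V(x)V(x')$, which is uniformly integrable against $\mu_\star\otimes\mu_\theta$ by the moment bound obtained in the first step, and the $V$-norm convergence $\mu_\theta\to\mu_\star$ is strong enough to carry functions dominated by a constant multiple of $V$, hence handles the varying outer measure. The main obstacle is precisely the $V$-norm continuity of $\theta\mapsto\mu_\theta$ at $\thv$: bridging the uniform $V$-geometric ergodicity~\eqref{eq:V-geometric-ergodicity} with the pointwise continuity~\ref{item:Vgeo:two} requires careful bookkeeping of iterated kernel differences, and is the technical heart of the proof.
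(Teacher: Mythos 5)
Your overall architecture coincides with the paper's: reduce \refhyp[C]{ass:kullback:hmm} to continuity of $\theta\mapsto\Dhmm(\thv,\theta)$ at $\thv$ (where both you and the paper take the value to be zero), obtain a uniform-in-$\theta$ bound on the $V$-moments of the stationary $X$-marginals $\pix_\theta$ via the Cauchy/completeness argument in $\mathcal{M}_V$ driven by \eqref{eq:V-geometric-ergodicity} and \ref{item:Vgeo:one}, prove $V$-norm convergence $\pix_\theta\to\pix_\thv$, and conclude by dominated convergence for the inner integral together with the $V$-norm estimate for the varying outer measure. The one place you genuinely depart from the paper is the decomposition used to prove $\pix_\theta\to\pix_\thv$ in $\mathcal{M}_V$, and that is where there is a gap.

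In your telescoping $\pix_\theta-\pix_\thv = \pix_\theta\bigl(\Tx[\theta]^n-\Tx[\thv]^n\bigr)+\bigl(\pix_\theta-\pix_\thv\bigr)\Tx[\thv]^n$, the second term is fine: its $V$-norm is at most $C\bigl(\pix_\theta V+\pix_\thv V\bigr)\rho^n$, hence small for large $n$ uniformly in $\theta$ once the uniform moment bound is in place. The first term, however, requires controlling $\int \|\Tx[\theta]^n(x,\cdot)-\Tx[\thv]^n(x,\cdot)\|_V\,\pix_\theta(\rmd x)$, and what the induction on \ref{item:Vgeo:two} and \ref{item:Vgeo:one} delivers is only \emph{pointwise} (in $x$) convergence of the integrand to zero, together with the domination $\|\Tx[\theta]^n(x,\cdot)-\Tx[\thv]^n(x,\cdot)\|_V\leq 2M^nV(x)$. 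Pointwise convergence plus a $V$-domination does not pass to the limit under the \emph{moving} measure $\pix_\theta$: nothing in the hypotheses prevents $\pix_\theta$ from charging, as $\theta\to\thv$, regions where the convergence in \ref{item:Vgeo:two} is arbitrarily slow. The natural attempt to decouple, writing $\pix_\theta g_\theta=(\pix_\theta-\pix_\thv)g_\theta+\pix_\thv g_\theta$ with $g_\theta(x)=\|\Tx[\theta]^n(x,\cdot)-\Tx[\thv]^n(x,\cdot)\|_V$, is circular, since $|(\pix_\theta-\pix_\thv)g_\theta|\leq 2M^n\|\pix_\theta-\pix_\thv\|_V$ and $2M^n$ is not small. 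The paper avoids this by never integrating the kernel difference against a $\theta$-dependent measure: its Step~1 upgrades the Cauchy estimate to the assertion that $\Tx[\theta]^n(x_0,\cdot)\to\pix_\theta$ in $\mathcal{M}_V$ \emph{uniformly in} $\theta$ for a fixed $x_0$, and its Step~3 then bounds $\|\pix_\theta-\pix_\thv\|_V$ by $\|\pix_\theta-\Tx[\theta]^n(x_0,\cdot)\|_V+\|\Tx[\theta]^n(x_0,\cdot)-\Tx[\thv]^n(x_0,\cdot)\|_V+\|\Tx[\thv]^n(x_0,\cdot)-\pix_\thv\|_V$; the outer terms are small for large $n$ uniformly in $\theta$, and the middle term, at a \emph{fixed} $x_0$ and fixed $n$, tends to zero by exactly the induction you describe. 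Your proof goes through once the first term of your telescoping is replaced by this anchored version.
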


\begin{proof}
  Let $\thv\in\Theta$. It is sufficient to show that the function $\theta
  \mapsto \Dhmm(\thv, \theta)$ is continuous at $\thv$, where it takes on the value zero. 
  For all $\theta \in \Theta$, denote by $\pix_\theta$ the marginal
  probability measure on $(\Xset,\Xsigma)$ defined by
  $\pix_\theta(A)=\pi_\theta(A \times \Yset)$ for all $A\in\Xsigma$, and let the function
  $\phi_\theta: \Xset \to \rsetpos$ be defined by
$$
\phi_\theta(x') \eqdef \int \kullback{\Tdy[\thv](x,\cdot)}{\Tdy[\theta](x',\cdot)} \pix_{\thv}(\rmd x)\eqsp.
$$
We may then write 
\begin{equation} \label{eq:dhmm}
\Dhmm(\thv, \theta)=[\pix_\theta \phi_\theta - \pix_\thv \phi_\theta]+\pix_\thv \phi_\theta 
\end{equation}
(where, as usual, $\pix_\theta \phi_\theta$ denotes the integral of $\phi_\theta$ w.r.t. $\pix_\theta$; see Section~\ref{sec:kernel:notation}). 
We proceed stepwise. 

\noindent\textbf{Step~1.} We first show that $\pix_\thv V <\infty$. Denoting
 by ${\cal M}_V$ the Banach space of signed measures $\mu$ such that
$|\mu| V <\infty$, equipped with the $V$-norm, we will
actually prove the following more precise assertion.
\begin{enumerate}[label=(\roman*),resume=Vgeo]
\item \label{item:Vgeo:assertion:uniform:Vnormconv} For all $x\in\Xset$,
$\Tx[\theta]^n(x,\cdot)$ converges to $\pi_\theta^X$ in ${\cal M}_V$, uniformly
over $\theta\in\Theta$.
\end{enumerate}
(Hence $\pix_\theta V <\infty$ for all $\theta \in \Theta$.)  For all
probability measures $\mu_1$, $\mu_2$ on $(\Xset,\Xsigma)$, all $\theta \in
\Theta$ and all $f$ such that $|f|_V \leq 1$, we have
\begin{multline*}
|\mu_1\Tx[\theta]^nf-\mu_2\Tx[\theta]^nf|=\left|\int \mu_1(\rmd x) \, \mu_2(\rmd x') \, \lr{\Tx[\theta]^nf(x)-\Tx[\theta]^nf(x')} \right|
\\ \leq \int \mu_1(\rmd x) \, \mu_2(\rmd x') \, \vnorm{\Tx[\theta]^n(x,\cdot)-\Tx[\theta]^n(x',\cdot)}{V} \eqsp.
\end{multline*}
Thus, \eqref{eq:V-geometric-ergodicity} provides a constant $C>0$ such that 
\begin{equation*}
 \vnorm{\mu_1\Tx[\theta]^n-\mu_2\Tx[\theta]^n}{V} \leq C(\mu_1 V+\mu_2V) \rho^n \eqsp.
\end{equation*}
Taking $\mu_1=\delta_x$ and $\mu_2=\Tx[\theta](x,\cdot)$ and combining with
\ref{item:Vgeo:one}, we get that 
$$
\sum_{n = 0}^\infty \sup_{\theta \in \Theta}\vnorm{\Tx[\theta]^n(x,\cdot)-\Tx[\theta]^{n+1}(x,\cdot)}{V}\leq
\frac{C (1+M)}{1-\rho} V(x), 
$$
and since ${\cal M}_V$ is complete, we obtain that $\Tx[\theta]^n(x,\cdot)$
converges uniformly in ${\cal M}_V$ over $\Theta$. Denoting by $\tilde
\pi_\theta$ this limit, we are only required to show that
$\tilde\pi_\theta=\pi_\theta^X$ in order to
establish~\ref{item:Vgeo:assertion:uniform:Vnormconv}.  Now, since all bounded
functions have finite $V$-norm, for all $A\in\Xsigma$,
$$
\tilde \pi_\theta(A)=\lim_{n \to \infty} \Tx[\theta]^n\1{A}(x)=\lim_{n \to \infty} \Tx[\theta]^{n+1}\1{A}(x)=\tilde \pi_\theta \Tx[\theta](A),
$$
so that $\tilde \pi_\theta$ is an invariant probability measure for
$\Tx[\theta]$. It is thus $\pi_\theta^X$ as a consequence of \refhyp[B]{ass:stat:dist}.
  
\noindent\textbf{Step~2.} Next, we show that
$\lim_{\theta\to\thv}\pix_\thv \phi_\theta =0$. Assumption~\ref{item:Vgeo:kullbackassumption},
$\pix_\thv V <\infty$ and the dominated convergence theorem give immediately that $\theta
\mapsto \phi_\theta(x)$ is continuous at $\thv$, for $\mu\pp\; x$. Moreover, $\sup_{\theta\in \Theta}
|\phi_\theta|_V<\infty$. Consequently, using again the dominated
convergence theorem, the last term on the right hand side of \eqref{eq:dhmm} converges to
$\pix_\thv \phi_\thv =0$ as $\theta$ tends to $\thv$.

\noindent\textbf{Step~3.} Finally, we consider the term between brackets
in~\eqref{eq:dhmm} and show that it converges to zero as $\theta\to\thv$. Since we
just showed that $\sup_{\theta\in \Theta} |\phi_\theta|_V<\infty$, it suffices
to show that $\pix_\theta$ converges to $\pix_{\thv}$ in $\mathcal{M}_V$.
By~\ref{item:Vgeo:assertion:uniform:Vnormconv}, this boils down to proving that
for all $n\in \nsetpos$, $\Tx[\theta]^n(x,\cdot)$ converges to $\Tx[\thv]^n(x,\cdot)$
in $\mathcal{M}_V$. This can be done by induction on $n$. The base case $n=1$
corresponds to \ref{item:Vgeo:two}. The induction follows easily
from the following
decomposition, valid for all $f:\Xset\to\rset$ such that $|f|_{V}\leq1$: 
\[
\begin{split}
\lefteqn{\left|\Tx[\theta]^{n+1}(x,f)-\Tx[\thv]^{n+1}(x,f)\right|} \hspace{10mm} \\
&\leq \left|\Tx[\theta]^{n}(x,\Tx[\theta]f) - \Tx[\thv]^{n}(x,\Tx[\theta]f )\right|+\left|\Tx[\thv]^{n}(x,\Tx[\theta]f - \Tx[\thv]f )\right|
\\
&\leq\vnorm{\Tx[\theta]^{n}-\Tx[\thv]^{n}}{V}\,\sup_{\theta\in\Theta}\vnorm{\Tx[\theta]}V
+\vnorm{\Tx[\thv]^{n}}{V}\,
\vnorm{\Tx[\theta]-\Tx[\thv]}{V}.  
\end{split}
\]
By observing that~\ref{item:Vgeo:one} implies
$\sup_{\theta \in \Theta} \vnorm{\Tx[\theta]^n}V<\infty$ for all $n \in \nsetpos$, we may conclude the proof. 
\end{proof}

\subsection{Stochastic volatility models}

Consider the stochastic volatility model 
\begin{equation} \label{eq:stoVol}
\begin{split}
X_{k+1}&=\varphi X_k+\sigma \zeta_{k + 1} \eqsp, \\
Y_k&=\beta \exp(X_k/2) \epsilon_k \eqsp, 
\end{split} 
\quad k \in \nset, 
\end{equation}
where $(\zeta_k)_{k \in \nsetpos}$ and $(\epsilon_k)_{k \in \nset}$ are
independent sequences of i.i.d. Gaussian random vectors in $\rset^2$ with zero
mean and identity covariance matrix; see \cite{hull:white:1987}. A general
description of stochastic volatility models as HMMs is provided in
\cite{genon-catalot:jeantheau:laredo:2000}. In this case, $\Xset = \Yset =
\rset$. If the parameter vector $\theta=(\beta,\sigma,\varphi)$ belongs to a
compact parameter space, we may apply the theory developed in the previous
section. However, in this example, $\theta$ is assumed to belong to the
\emph{non-compact} parameter space
$$
\Theta\eqdef\{(\beta,\sigma,\varphi):  \beta\geq \beta_-,\; \sigma \geq \sigma_-,\;|\varphi|\leq \varphi_+\}\eqsp,
$$
where $\beta_->0$, $\sigma_->0$ and $\varphi_+ \in (0,1)$. Denote by $\thv=(\beta_\star,\sigma_\star,\varphi_\star)$ the true value of the parameter. In this model,
\begin{equation} \label{eq:stoVol:g:theta}
\begin{split}
\Tdx[\theta](x,x') &=\frac{1}{\sqrt{2 \pi \sigma^2}} \exp\left\{-\frac{(x'-\varphi x)^2}{2 \sigma^2}\right\} \eqsp, \\
\Tdy[\theta](x,y) &=\frac{1}{\sqrt{2 \pi \beta^2}} \exp\left\{-\frac x2-\frac{y^2}{2 \beta^2}\rme^{-x}\right\} \eqsp.
\end{split}
\end{equation}
Assumption~\refhyp[B]{ass:stat:dist} is clearly satisfied with
\begin{equation} \label{eq:stoVol:pi}
\pi_\theta(B)=\int \frac{1}{2\pi}\sqrt{\frac{1-\varphi^2}{\sigma^2}}
\exp\lr{-\frac{(1-\varphi^2)x^2}{2\sigma^2}-\frac{u^2}{2}} \oneSub{B}(x,\beta
\rme^{x/2}u)  \, \rmd u \, \rmd x \eqsp.
\end{equation}
Assumption~\refhyp[B]{ass:amle:pomc} follows from  \cite[Section
3.3]{douc:moulines:olsson:vanhandel:2011} and
Assumption~\refhyp[B]{ass:q:positive} is immediate.
Moreover, straightforward algebra yields
\begin{multline*}
\D(\thv, \theta) = \log\frac{\sigma\beta}{\sigma_\star \beta_\star}+
\frac12\PE^\thv_\piv\left[X_1^2\right] \left(\sigma^{-2}-\sigma_\star^{-2}\right)+
\PE^\thv_\piv\left[X_0X_1\right]\left(\frac{\phi_\star}{\sigma_\star^2}-\frac{\phi}{\sigma^2}\right)\\+
\frac12\PE^\thv_\piv\left[X_0^2\right]\left(\frac{\phi^2}{\sigma^2}-\frac{\phi_\star^2}{\sigma_\star^2}\right)+
\frac12\PE^\thv_\piv\left[Y_1^2\rme^{-X_1}\right]\left(\beta^{-2}-\beta_\star^{-2}\right).
\end{multline*}
Note that $\PE^\thv_\piv[\rme^{a|X_0|}]$ and $\PE^\thv_\piv[|Y_0|^a]$ are
finite for all $a>0$. Hence by the Cauchy-Schwarz inequality all the
expectations appearing in the previous display are finite and we conclude that,
for all $\thv\in\Theta$, $\theta\mapsto\D(\thv, \theta)$ is a continuous
function. Thus, \refhyp[B]{ass:kullback} holds for all
priors being strictly positive (possibly unnormalized) measures  $\prior$ on $(\Theta, \mathcal{T})$.
Since $\Theta$ is non-compact, the posterior consistency needs to be established via 
\Cref{thm:noncompact}. To this end, it only remains to check
\refhyp[B]{ass:tight:bound} and~\refhyp[B]{ass:additive}.

We check \refhyp[B]{ass:tight:bound} with $\ell=2$. Write, for all $A \in \mct$ and $\chunk{y}{0}{2} \in \rset^3$,
\begin{equation}
\psup[A]{\chunk{y}{0}{2}}=\sup_{(\theta,x_0) \in A \times \rset} D_{\theta,x_0}(\chunk{y}{0}{2})\eqsp, \label{eq:stoVol:def:psup}
\end{equation}
with
$$
D_{\theta,x_0}(\chunk{y}{0}{2}) \eqdef \iint  \Tdx[\theta](x_0,x_1) \Tdy[\theta](x_1,y_1) \Tdx[\theta](x_1,x_2) \Tdy[\theta](x_2,y_2) \, \rmd x_1 \, \rmd x_2\eqsp. \nonumber
$$
We will use the following bounds obtained by straightforward algebra: for all $\theta \in \Theta$,
  \begin{equation} \label{eq:stoVol:easyBounds-1}
   \sup_{x \in \rset} \Tdy[\theta](x,y) =\frac{1}{|y|\sqrt{2 \pi \rme}}\eqsp,
    \quad \int \Tdy[\theta](x,y) \, \rmd x=\frac{1}{|y|}\eqsp,
   \quad \Tdy[\theta](x,y)\leq \frac{\rme^{-x/2}}{\beta\sqrt{2\pi}}
   \end{equation} 
   and 
   \begin{equation} \label{eq:stoVol:easyBounds-2}
   \sup_{(x,x') \in \rset^2} \Tdx[\theta](x,x')=\frac{1}{\sqrt{2 \pi \sigma^2}}\eqsp. 
  \end{equation}
Then, using \eqref{eq:stoVol:easyBounds-1} and \eqref{eq:stoVol:easyBounds-2},
\begin{multline}
\label{eq:stoVol:one}
D_{\theta,x_0}(\chunk{y}{0}{2}) \leq
\sup_{(x_0, x_1) \in \rset^2}\Tdx[\theta](x_0,x_1) \int \Tdy[\theta](x_1,y_1)
 \int \Tdx[\theta](x_1,x_2) \, \rmd x_2  \, \rmd x_1 \sup_{x_2 \in \rset}\Tdy[\theta](x_2,y_2)\\
\leq \frac{1}{|y_1| |y_2| \sqrt{2\pi\sigma^2} \sqrt{2 \pi  \rme }}. 
\end{multline}
Moreover, using the definitions of $\Tdy[\theta](x_1,y_1)$, $\Tdx[\theta](x_1,x_2)$ and the bound on $\Tdy[\theta](x_2,y_2)$ given in  \eqref{eq:stoVol:easyBounds-1}, we get, by standard calculations,
   \begin{align}
D_{\theta,x_0}(\chunk{y}{0}{2}) &\leq \left(\int \Tdx[\theta](x_0,x_1) \, \rmd x_1\right) \sup_{x_1 \in \rset} \lr{\Tdy[\theta](x_1,y_1) \int \Tdx[\theta](x_1,x_2) \Tdy[\theta](x_2,y_2) \, \rmd x_2} \nonumber\\
& \leq \sup_{x_1 \in \rset}
\left(\frac{1}{\sqrt{2 \pi \beta^2}} \exp\left\{-\frac{x_1}{2}-\frac{y_1^2}{2
      \beta^2}\rme^{-x_1}\right\} \right. \nonumber\\
&\phantom{\leq  \sup_{x_1 \in \rset} (}\left. \times \frac{1}{\sqrt{2 \pi \sigma^2}}
\frac{1}{ \sqrt{2 \pi \beta^2}}\int \exp\left\{-\frac{(x_2-\varphi x_1)^2}{2 \sigma^2}-\frac{x_2}{2}\right\} \rmd x_2\right) \nonumber\\
&= \sup_{x_1 \in \rset} \left( \frac{1}{\sqrt{2 \pi\beta^2}} \exp\left\{-\frac{x_1}{2}-\frac{y_1^2}{2 \beta^2}\rme^{-x_1}\right\} \frac{1}{\sqrt{2 \pi\beta^2}} \exp\left\{-\frac{1}{2}\varphi x_1 +\frac{\sigma^2}{8}\right\} \right) \nonumber\\
&= \sup_{x_1 \in \rset} \left( \frac{1}{2 \pi \beta^2} 
\exp\left\{-\frac{x_1}{2}(1+\varphi)-\frac{y_1^2}{2 \beta^2}\rme^{-x_1}
  +\frac{\sigma^2}{8}\right\} \right) \nonumber\\
&=\frac{\rme^{\frac{\sigma^2}{8}}}{2\pi \beta^{1-\varphi}} \lrb{\frac{(1+\varphi)\rme^{-1}}{y_1^2}}^{\frac{1+\varphi}{2}}\eqsp. \label{eq:stoVol:two}
   \end{align}
   We then set $C_m=\{\theta \in \Theta:  \sigma^2 \leq \log m, \, \beta\leq \rme^m\}$, so that
$$
C_m^c \subset \{\theta \in \Theta: \, \sigma^2 > \log m\} \cup \{\theta \in \Theta: \, \sigma^2 \leq \log m\eqsp, \,\beta> \rme^m\}\eqsp.
$$
Combining this inclusion with \eqref{eq:stoVol:def:psup}, \eqref{eq:stoVol:one} and \eqref{eq:stoVol:two} yields
$$
\limsup_{m \to \infty}\psup[C_m^c]{\chunk{Y}{0}{2}}=0 \quad \PP^\thv_\piv\as,
$$
implying that \eqref{eq:conv:zero} is satisfied with $\ell=2$. Now, by \eqref{eq:stoVol:one}, for all $\chunk{y}{0}{2} \in \rset^3$,
\begin{multline*}
\log^+ \psup[\Theta]{\chunk{y}{0}{2}}=\sup_{(\theta, x_0) \in \Theta \times \rset} \log^+ D_{\theta,x_0}(\chunk{y}{0}{2}) \\ 
\leq \frac 12 \log^+(4\pi^2\sigma^2 \rme) + \log^+ |y_1|+ \log^+ |y_2|, 
\end{multline*}
and using \eqref{eq:stoVol:pi}, this implies that \eqref{eq:log:moment} is
satisfied with $\ell=2$. Thus, we may conclude that \refhyp[B]{ass:tight:bound} holds.

Condition~\eqref{eq:B6-1} in \refhyp[B]{ass:additive} holds if $\prior$ is a
probability measure on $(\Theta, \mathcal{T})$. Alternatively, one may, e.g., 
use~\eqref{eq:stoVol:one} and~\eqref{eq:stoVol:two} to obtain that for all $y_{1:2} \in \rset^2$, 
$$
 \py{\theta}{\Xinit}{\chunk{y}{1}{2}}\leq  \left(
 \frac{1}{|y_1| |y_2| \sqrt{2\pi\sigma^2} \sqrt{2 \pi  \rme }} \right)
\wedge
\left(\frac{\rme^{\frac{\sigma^2}{8}}}{2\pi \beta^{1-\varphi}} \lrb{\frac{(1+\varphi)\rme^{-1}}{y_1^2}}^{\frac{1+\varphi}{2}}\right).
$$
Condition~\eqref{eq:B6-1} in \refhyp[B]{ass:additive} is then implied by less restrictive condition 
$$
\int \sigma^{-1} \wedge(\rme^{\sigma^2/8}/\beta^{1-\varphi_+}) \, \prior(\rmd\theta) < \infty.
$$

We now prove that \eqref{eq:B6-2} in Assumption~\refhyp[B]{ass:additive} holds true with
$n_0 = 1$. By Jensen's inequality and \eqref{eq:stoVol:g:theta},
\begin{align*}
\PE^\thv_\piv[\log \py{\thv}{\piv}{Y_1}]&=\PE^\thv_\piv\lrb{\log \iint \piv(\rmd x_0) \, \Tdx[\thv](x_0,x_1)\Tdy[\thv](x_1,Y_1)}\\
&\geq \PE^\thv_\piv\lrb{\iint \piv(\rmd x_0) \, \Tdx[\thv](x_0,x_1)\log \Tdy[\thv](x_1,Y_1)}\\
&= -\frac12 \log(2\pi\beta_\star^2)-\frac{\PE^\thv_\piv[X_1]}{2} -\frac{\PE^\thv_\piv[Y_1^2\rme^{-X_1}]}{2\beta_\star^2}>-\infty\eqsp,
\end{align*}
where we used again that $\PE^\thv_\piv[\rme^{a|X_0|}]$ and $\PE^\thv_\piv[|Y_0|^a]$
are both finite for all $a > 0$. Finally, \Cref{thm:noncompact} applies, establishing the
posterior consistency for the model \eqref{eq:stoVol}, e.g., for all
strictly positive probability measures $\prior$ on $\Theta$ and all initial
distributions $\Xinitv$ on $(\Zset, \Zsigma)$,
$$
\PP^\thv_\Xinitv\lr{\post{\chunk{Y}{1}{n}}  \weakconv{n \to \infty} \delta_\thv}=1 \eqsp.
$$


\section{A general approach to posterior consistency}
\label{sec:gener-approach}

It turns out to be convenient to embed the problem of posterior consistency for {\fdPOMM}s into a more general setting. This widened perspective allows a number of universal steps to be identified, by which the posterior consistency can be established in general. Importantly, this machinery is not at all specific to
the framework of {\fdPOMM}s and is thus of independent interest.

\subsection{General setting}
\label{sec:general-setting}
Let $(\Omega,
\mcf,\sequence{\mcf}[n][\nset] ,\PP)$ be a filtered probability space. If for all
$n \in \nset$, $\nu_n$ is a $\sigma$-finite measure on $(\Omega,\mcf_n)$ and the restriction $\PP|_{\mcf_n}$ of $\PP$ to $\mcf_n$ is
absolutely continuous \wrt\ $\nu_n$, then we say that $(\Omega,\mcf,\sequence{\mcf}[n][\nset],\sequence{\nu}[n][\nset] ,\PP)$ is a {\em progressively dominated filtered probability space}.

Moreover, let $\{\gd{\theta}{n},\,\theta\in \Theta\}$ be a collection of probability
densities \wrt\ $\nu_n$. Let $\gdstar{n}$ denote the Radon-Nikodym derivative
$$
\gdstar{n} \eqdef \frac{\rmd \PP|_{\mcf_n}}{\rmd \nu_n}\eqsp.
$$
In \Cref{sec:proof-main-results}, it is shown how the {\fdPOMM}s can be cast
into this general setting; see also \Cref{sec:exple-iid} for a treatment of the
more simple i.i.d. case.

We now introduce the prior and posterior distributions, denoted
by $\lambda$ and $\lambda_n$, respectively, in this general setting.
Let $\lambda$ be a non-zero $\sigma$-finite measure on $(\Theta, \mct)$. Then,
for all $A \in \mct$, the posterior ``probability'' $\lambda_n(A)$ with prior
$\lambda$ is defined by
\begin{equation}
  \label{eq:def-posterior-proba}
\lambda_n(A) \eqdef \frac{\int_A \lambda(\rmd \theta) \, \gd{\theta}{n}}{\int_\Theta \lambda(\rmd \theta) \, \gd{\theta}{n}}
\end{equation}
whenever this ratio is well-defined. In what follows, we will always assume that $(\theta, \omega)
\mapsto \gd{\theta}{n}(\omega)$ is measurable from
$(\Theta\times\Omega,\mct\tensprod\mcf_n)$ to $(\rset_+,\borel{\rset_+})$, which implies that the numerator and denominator are (non-negative)
$\mcf_n$-measurable random variables.

We now introduce the main assumption on the model. It says that there are
``sufficiently many'' $\theta$ for which the likelihood ratio
$\gd{\theta}{n}/\gdstar{n}$ is not decreasing exponentially fast under $\PP$.

\begin{hyp}{A}
\item\label{ass:general} For all $\delta > 0$, there exists
    $\Theta_\delta \in \mct$ such that $\lambda(\Theta_\delta) > 0$
    and for all $\theta \in \Theta_\delta$,
    \begin{equation*}
        \liminf_{n \to \infty} n^{-1} \log \frac{\gd{\theta}{n}}{\gdstar{n}}
        \geq - \delta  \quad \PP\as
    \end{equation*}
\end{hyp}
We expect the set $\Theta_\delta$ in \refhyp[A]{ass:general} to contain
parameters $\theta$ whose corresponding densities $\gd{\theta}{n}$ are ``asymptotically close'' to the
true ones $\gdstar{n}$.
In contrast, the following definition addresses the case of parameters
indexing densities remaining ``far away'' from the true ones.

\begin{definition}\label{def:admissible}
We say that a set $A \in \mct$ is $\PP$-\emph{remote} if and
only if
$$
         \limsup_{n \to \infty} n^{-1} \log
        \left( \int_A \frac{\gd{\theta}{n}}{\gdstar{n}}\;
        \prior(\rmd \theta)   \right) < 0 \quad  \PP\as
$$
Moreover, we say that a set $A$ is \emph{approximately}
$\PP$-\emph{remote} if and only if for all $\varepsilon > 0$ there exists a
set $K_\varepsilon \in \mct$ such that
\begin{enumerate}[(i)]
    \item \label{item:ass:Compact} $A\cap K_\varepsilon$ is
      $\PP$-remote;
    \item \label{item:ass:outCompact}
    $
        \displaystyle \limsup_{n \to \infty} \prior_n(K_\varepsilon^c)
        \leq \varepsilon  \quad  \PP\as
    $
\end{enumerate}
We will denote  by $\mathcal{A}_{\PP}$ the class of all approximately $\PP$-remote sets.
\end{definition}
Here ``remote'' refers to the fact that the likelihood ratio averaged over the
parameters within $A$ decreases exponentially fast to zero under $\PP$.
\begin{remark}\label{rem:tightness}
  Typically, for all $\varepsilon>0$, Property~\ref{item:ass:outCompact} in
  \Cref{def:admissible} is satisfied for a well chosen compact set
  $K_\varepsilon$.  We refer to this as the \emph{uniform} $\PP\as$
  \emph{tightness property of the posterior distribution}.  In this case a set
  $A$ is approximately $\PP$-remote whenever $A\cap K$ is $\PP$-remote for all
  compact sets $K$.
\end{remark}
Let
$\PP_{\theta,n}$ be the probability on $(\Omega, \mcf_n)$ defined by
$$
\PP_{\theta,n}(B) \eqdef \int_B \gd{\theta}{n} \, \rmd \nu_n, \quad B \in \mcf_n\eqsp.
$$
We have the following characterization of $\PP$-remote sets, which is closely
related to \cite[Theorem~5(2)]{barron:1988}, although in a simplified form and without
relying on the asymptotic-merging condition, thanks to the normalization by $\gdstar{n}$.
\begin{proposition}\label{prop:cns:remote} The set $A \in \mct$ is $\PP$-remote
  if and only if there exists a sequence $(B_n)_{n \in \nset}$ of sets in
  $\mcf$ such that $B_n\in\mcf_n$ for all $n\in\nset$ and
\begin{align}
& \limsup_{n \to \infty}  n^{-1} \log \int_A \prior(\rmd \theta) \, \PP_{\theta,n}(B_n)<0\eqsp, \label{eq:cns:theta}\\
& \PP\lr{\liminf_{n \to \infty} B_n} =1\eqsp.  \label{eq:cns:thv}
\end{align}
\end{proposition}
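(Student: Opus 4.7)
The plan is to observe that for every $B_n\in\mcf_n$, Fubini gives
\[
\int_A \prior(\rmd\theta)\,\PP_{\theta,n}(B_n)
= \int_A \prior(\rmd\theta)\int_{B_n}\gd{\theta}{n}\,\rmd\nu_n
= \PE\lrb{\1{B_n}\,X_n},
\qquad
X_n \eqdef \int_A \frac{\gd{\theta}{n}}{\gdstar{n}}\,\prior(\rmd\theta).
\]
The random variable $X_n$ is non-negative, well-defined $\PP$-a.s.\ (since $\gdstar{n}>0$ $\PP$-a.s.), and $\mcf_n$-measurable by the joint measurability of $(\theta,\omega)\mapsto\gd{\theta}{n}(\omega)$ together with Fubini. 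This identity reduces both directions of the equivalence to estimates on the scalar, $\mcf_n$-adapted, non-negative process $(X_n)$.

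For the sufficiency direction, I would start from a sequence $(B_n)$ satisfying \eqref{eq:cns:theta}--\eqref{eq:cns:thv}, pick $c>0$ and $N$ such that $\PE[\1{B_n}X_n]\leq \e^{-nc}$ for all $n\geq N$, and fix $\alpha\in(0,c)$. Markov's inequality yields
\[
\PP(\1{B_n}X_n\geq \e^{-n\alpha}) \leq \e^{n\alpha}\PE[\1{B_n}X_n] \leq \e^{-n(c-\alpha)},
\]
which is summable in $n$. Borel--Cantelli then forces $\1{B_n}X_n<\e^{-n\alpha}$ eventually, $\PP$-a.s. On the $\PP$-a.s.\ event $\liminf_n B_n$ one has $\1{B_n}=1$ for $n$ large, so $X_n<\e^{-n\alpha}$ eventually and hence $\limsup_n n^{-1}\log X_n\leq -\alpha<0$ $\PP$-a.s., which is exactly the $\PP$-remoteness of $A$.

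For the necessity direction, I would take $c>0$ with $\limsup_n n^{-1}\log X_n\leq -c$ on a full-probability set $\Omega_0$ (this is how the $\PP$-remoteness hypothesis enters) and set $B_n\eqdef\{X_n\leq \e^{-nc/2}\}\in\mcf_n$. On $\Omega_0$ we have $X_n\leq \e^{-nc/2}$ for all large enough $n$, so $\Omega_0\subseteq \liminf_n B_n$ and \eqref{eq:cns:thv} is immediate. Plugging $B_n$ back into the Fubini identity gives $\int_A \prior(\rmd\theta)\,\PP_{\theta,n}(B_n)=\PE[\1{B_n}X_n]\leq \e^{-nc/2}$, so \eqref{eq:cns:theta} holds with deterministic rate $-c/2$.

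The only real content lies in the Fubini identity; once it is in place, sufficiency reduces to a Markov--Borel--Cantelli argument and necessity to an explicit thresholding construction. The mild technical points---joint measurability and $\PP$-a.s.\ finiteness of $X_n$, and the availability of a deterministic exponential rate $c>0$ in the $\PP$-remoteness definition---are handled directly by the standing measurability hypothesis on $(\theta,\omega)\mapsto \gd{\theta}{n}(\omega)$ and the Radon--Nikodym property of $\gdstar{n}$.
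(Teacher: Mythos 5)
Your proof is correct and follows essentially the same route as the paper's: the same Fubini/change-of-measure identity $\int_A\prior(\rmd\theta)\,\PP_{\theta,n}(B_n)=\PE[\1{B_n}X_n]$, a first-moment argument for sufficiency (your Markov plus Borel--Cantelli step is the same computation as the paper's observation that $\PE[\sum_n\rho^nX_n\1{B_n}]<\infty$ forces the terms to decay geometrically a.s.), and the identical thresholding construction $B_n=\{X_n\le\varrho^n\}$ for necessity. The only delicate step you flag---extracting a deterministic exponential rate from the a.s.\ negativity of the limsup in the definition of $\PP$-remoteness---is taken in exactly the same way (choosing a fixed $\rho>1$ with $\limsup\le-\log\rho$ a.s.) in the paper's proof, so there is nothing to correct relative to it.
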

\begin{proof}
See \Cref{subsec:proof:prop:cns:remote}.
\end{proof}

Following the approach of \cite{barron:chervish:wasserman:1999} we have the following  general result, which
extends the i.i.d. framework used in that reference.
\begin{theorem} \label{thm:general:post:const}
    Assume that \refhyp[A]{ass:general} holds.
    Then all approximately $\PP$-remote sets $A \in \mathcal{A}_{\PP}$ satisfy
\begin{equation} \label{eq:conv:lambda:remote}
    \lim_{n \to \infty} \prior_n(A) = 0  \quad \PP\as\eqsp,
\end{equation}
where $\lambda_n(A)$ is defined by~(\ref{eq:def-posterior-proba}).
\end{theorem}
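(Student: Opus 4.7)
My plan is to write
\[
\lambda_n(A) \leq \lambda_n(A\cap K_\varepsilon) + \lambda_n(K_\varepsilon^c)
\]
for $K_\varepsilon$ given by the approximate $\PP$-remoteness of $A$. The second term is handled directly by Property~\eqref{item:ass:outCompact} of \Cref{def:admissible}: $\limsup_n \lambda_n(K_\varepsilon^c) \leq \varepsilon$ $\PP$-a.s. The whole game is thus to show that, for fixed $\varepsilon$, the first term tends to $0$ (in fact, exponentially fast) $\PP$-a.s., after which letting $\varepsilon\downarrow 0$ finishes the proof.

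To control $\lambda_n(A\cap K_\varepsilon)$, I would divide the numerator and denominator in the definition \eqref{eq:def-posterior-proba} by $\gdstar{n}$, so that
\[
\lambda_n(A \cap K_\varepsilon) = \frac{\int_{A\cap K_\varepsilon} (\gd{\theta}{n}/\gdstar{n})\,\lambda(\rmd\theta)}{\int_{\Theta} (\gd{\theta}{n}/\gdstar{n})\,\lambda(\rmd\theta)}.
\]
Since $A\cap K_\varepsilon$ is $\PP$-remote, there exists (for $\PP$-a.e.~$\omega$) some $\alpha>0$ such that the numerator is eventually bounded above by $\e^{-\alpha n}$. The task reduces to showing that the denominator decays no faster than $\e^{-\delta n}$ for any fixed $\delta>0$; taking $\delta<\alpha$ then delivers the desired exponential decay of $\lambda_n(A\cap K_\varepsilon)$.

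For the denominator, I would use \refhyp[A]{ass:general}: for arbitrary $\delta>0$, pick $\Theta_\delta$ with $\lambda(\Theta_\delta)>0$, and restrict the integral to $\Theta_\delta$. By \refhyp[A]{ass:general} together with Fubini--Tonelli (applied to the product $\lambda \otimes \PP$ on the measurable set $\{(\theta,\omega): \liminf_n n^{-1}\log (\gd{\theta}{n}/\gdstar{n})(\omega) \geq -\delta\}$), there is a full $\PP$-measure set of $\omega$'s on which the liminf condition holds for $\lambda$-a.e.~$\theta \in \Theta_\delta$. For any such $\omega$ and any $\eta>0$, Fatou's lemma gives
\[
\liminf_{n\to\infty} \int_{\Theta_\delta} \frac{\gd{\theta}{n}}{\gdstar{n}}\, \e^{(\delta+\eta)n}\,\lambda(\rmd\theta) \geq \lambda(\Theta_\delta) > 0,
\]
so that $\liminf_n n^{-1}\log \int_{\Theta_\delta}(\gd{\theta}{n}/\gdstar{n})\,\lambda(\rmd\theta) \geq -(\delta+\eta)$; sending $\eta\downarrow 0$ gives the bound with $-\delta$ on the right. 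Combined with the exponential decay of the numerator, this yields $\limsup_n n^{-1}\log \lambda_n(A\cap K_\varepsilon) \leq -\alpha+\delta<0$ $\PP$-a.s., provided $\delta$ is chosen smaller than $\alpha$.

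The main subtlety here is the transition from the pointwise-in-$\theta$ statement of \refhyp[A]{ass:general} to an integrated lower bound on the denominator: the exceptional $\PP$-null set in \refhyp[A]{ass:general} depends on $\theta$, so one cannot naively swap ``for all $\theta$'' with ``$\PP$-a.s.'' Fubini--Tonelli together with Fatou's lemma is what makes this swap legitimate while preserving a uniform exponential rate in $n$. Once both ingredients are in place, combining them with the splitting above and letting $\varepsilon\downarrow 0$ along a countable sequence yields $\lambda_n(A)\to 0$ $\PP$-a.s., as claimed.
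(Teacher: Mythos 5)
Your proof is correct and follows essentially the same route as the paper: the same splitting over $K_\varepsilon$ and $K_\varepsilon^c$, the same normalization by $\gdstar{n}$, and the same key step of lower-bounding the denominator by restricting to $\Theta_\delta$ and combining Fubini with Fatou's lemma (this is exactly the content of the paper's Lemma~\ref{eq:lemma:merging:w:pr:one}). The only cosmetic difference is that you carry an explicit slack $\eta$ in the exponent where the paper absorbs it by working with $\Theta_{\varepsilon/2}$; the quantifier issue you would need to tidy up (the rate $\alpha$ from remoteness is random, so the denominator bound must be established for a countable dense set of $\delta$ first) is present in the paper's argument as well and is resolved the same way.
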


\begin{proof}
See \Cref{subsec:proof:general:post:const}.
\end{proof}

\begin{remark}
It may happen that the ratio that defines $\lambda_n(A)$
  in~\eqref{eq:def-posterior-proba} is not well-defined. However, in
  \eqref{eq:conv:lambda:remote} it should be understood that $\lambda_n(A)$ is
  well-defined, $\PP\as$, for $n$ large enough (otherwise the limit would not
  be defined).
\end{remark}
\begin{remark}\label{rem:true-posterior-consistency}
Note that $\lambda_n$ is, $\PP\as$, a
  well-defined (posterior) probability on $(\Theta,\mct)$ if
  \begin{equation}
    \label{eq:posterior-well-defined-condition}
\PP\lr{0<\int \lambda(\rmd \theta) \gd{\theta}n <\infty}=1\eqsp.
  \end{equation}
  There are simple although restrictive sufficient conditions to
  ensure~\eqref{eq:posterior-well-defined-condition}.  For instance, note that
  when the prior is proper, i.e., when $\lambda$ is a finite measure, then
  $\int_\Theta \lambda(\rmd \theta) \gd{\theta}n <\infty$, $\nu_n$-almost everywhere and thus
  $\PP\as$ Moreover, if for all $\theta \in \Theta$, $\gd{\theta}{n}>0$ $\PP\as$,
  then $\PP(\int_\Theta \lambda(\rmd \theta) \gd{\theta}n>0)=1$.
\end{remark}

From now on we suppose that $(\Theta,d)$ is a metric space, and
  let $\mct$ be the Borel $\sigma$-field.

\begin{remark} \label{rem:post:const}
  Assume that $\lambda_n$ is, $\PP\as$, a well-defined probability measure on $(\Theta,\mct)$ for
   $n$ large enough. Suppose in addition that there exists $\thv\in\Theta$ such that $A_p=\ensemble{\theta\in \Theta}{d(\theta,\thv)\geq 1/p}$ is
  approximately $\PP$-remote for all $p\in \nsetpos$. Then \Cref{thm:general:post:const} implies
  \begin{equation}
    \label{eq:weak-conv-bayesan-consistency}
\PP\lr{\lambda_n \weakconv{n \to \infty} \delta_\thv}=1\eqsp.
  \end{equation}
This stems from the fact that by the Portmanteau lemma, $\{\lambda_n
\weakconv{n} \delta_\thv\}$ is implied by $\{\lim_n \prior_n(A_p) =
0\text{ for all }p \in \nsetpos\}$. Indeed, suppose that the latter event has occurred
and let $F$ be a closed
set. If $\thv \notin F$ then there exists $p\in \nsetpos$ such that $F\subset A_p$
and $\limsup_{n}\lambda_n(F) \leq \limsup_{n} \lambda_n(A_p) =
0=\delta_\thv(F)$. On the other hand, since we assumed $\lambda_n$ to be a probability
measure for $n$ large enough, if $\thv \in F$, $\limsup_n \lambda_n(F)\leq
1=\delta_\thv(F)$.
\end{remark}

The following lemma will be useful for checking \refhyp[A]{ass:general} with an explicit expression of $\Theta_\delta$.

\begin{lemma}\label{lem:image-density-lemma}
  Let
  $(\Omega,\mcf,\sequence{\overline{\mcf}}[n][\nset],\sequence{\overline{\nu}}[n][\nset]
  ,\PP)$ be a progressively dominated filtered probability space. For all $n
  \in \nset$, let $\bgdn{n}$ be a probability density function \wrt\
  $\overline{\nu}_n$.  Denote by $\bgdstar{n}$ the density of
  $\overline{\PP}^*_n=\PP|_{\overline{\mcf}_n}$ \wrt\ $\overline{\nu}_n$ and
  by $\overline{\PP}_n$ the probability measure having density $\bgdn{n}$ \wrt\
  $\overline{\nu}_n$. Suppose that for all $n\in\nset$, $\tilde\mcf_n$ is a
  sub-$\sigma$-field of $\overline{\mcf}_n$ and define by
\begin{equation}
  \label{eq:ratio-image-lemma-new-density}
\tgdn{n} \eqdef \frac{\rmd [{\overline{\PP}_n}_{|\tilde\mcf_n}]}{\rmd
  [{\overline{\nu}_n}_{|\tilde\mcf_n}]}
\quad\text{and}\quad \quad \tgdstar{n} \eqdef \frac{{\rmd[ \overline{\PP}^*_n}_{|\tilde\mcf_n}]}{\rmd[{\overline{\nu}_n}_{|\tilde\mcf_n}]} 
\end{equation}
the Radon-Nikodym derivatives of the $\tilde \mcf_n$-restrictions of $\overline{\PP}_n$ and
  $\overline{\PP}^*_n$ w.r.t. the $\tilde \mcf_n$-restriction of
  $\overline{\nu}_n$, respectively. Then it holds that
\begin{equation}
  \label{eq:ratio-image-lemma}
n^{-1} \log \frac{\tgdn{n}}{\tgdstar{n}} \geq n^{-1} \log
\frac{\bgdn{n}}{\bgdstar{n}}+\epsilon_n \quad\text{with}\quad
  \lim_{n\to\infty}\epsilon_n=0\quad\PP\as
\end{equation}
\end{lemma}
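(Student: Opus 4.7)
The plan is to reduce the inequality to a conditional-expectation identity and then extract the sequence $(\epsilon_n)$ via a standard Markov--Borel--Cantelli argument. Set $V_n \eqdef \bgdn{n}/\bgdstar{n}$ and $U_n \eqdef \tgdn{n}/\tgdstar{n}$; these ratios are well-defined $\PP\as$ because the denominators are positive $\PP\as$ as densities of restrictions of the probability $\PP$.

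First I would establish the Radon--Nikodym identity $U_n = \PE^{\PP}[V_n \mid \tilde{\mcf}_n]$ $\PP\as$ To do so, I would verify that for every $A \in \tilde{\mcf}_n$,
\[
\int_A U_n \, d\PP \;=\; \int_A \tgdn{n}\, d\overline{\nu}_n \;=\; \overline{\PP}_n(A) \;=\; \int_A V_n\, d\PP,
\]
by chaining the defining relations of $\tgdstar{n}$, $\tgdn{n}$, and $\bgdstar{n}$ together with the inclusion $\tilde{\mcf}_n \subset \overline{\mcf}_n$; the $\tilde{\mcf}_n$-measurability of $U_n$ then characterises it as the desired conditional expectation.

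The identity yields $\PE^{\PP}[V_n/U_n \mid \tilde{\mcf}_n] = 1$ on $\{U_n > 0\}$ (and $V_n = 0$ $\PP\as$ on $\{U_n = 0\}$, so this exceptional set is harmless). Conditional Markov then gives $\PP(V_n > U_n e^{n\delta}) \leq e^{-n\delta}$ for every $\delta > 0$; this is summable, so Borel--Cantelli delivers $\limsup_n n^{-1} \log(V_n/U_n) \leq \delta$ $\PP\as$, and taking $\delta$ through $(1/k)_{k \in \nsetpos}$ upgrades this to $\liminf_n n^{-1}\log(U_n/V_n) \geq 0$ $\PP\as$ Setting $\epsilon_n \eqdef n^{-1}\log(U_n/V_n) \wedge 0$ yields a non-positive sequence which satisfies \eqref{eq:ratio-image-lemma} by rearrangement and tends to zero $\PP\as$, since $\liminf_n \epsilon_n \geq (\liminf_n n^{-1}\log(U_n/V_n)) \wedge 0 = 0$ while $\epsilon_n \leq 0$.

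I expect the first step---the identification $U_n = \PE^{\PP}[V_n \mid \tilde{\mcf}_n]$---to be the most delicate, because $\tgdn{n}$ and $\tgdstar{n}$ are Radon--Nikodym derivatives with respect to the possibly merely $\sigma$-finite restriction $\overline{\nu}_n|_{\tilde\mcf_n}$, so one cannot invoke a standard ``conditional-density'' identity but must chain the four defining relations together by hand and keep track of the null sets arising from vanishing densities. Once this identity is secured, the remainder is an essentially textbook exponential-Markov plus Borel--Cantelli argument.
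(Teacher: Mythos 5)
Your proof is correct and follows essentially the same route as the paper's: the same key identity $\tgdn{n}/\tgdstar{n}=\PE\left[\left.\bgdn{n}/\bgdstar{n}\,\right|\tilde\mcf_n\right]$ under $\PP$ (established by integrating over $A\in\tilde\mcf_n$ and chaining the defining Radon--Nikodym relations), followed by a conditional Markov bound and Borel--Cantelli. The only difference is cosmetic: the paper thresholds at $n^2$, which yields $\epsilon_n=-2n^{-1}\log n$ directly, whereas you threshold at $\rme^{n\delta}$ and diagonalize over $\delta=1/k$ before extracting $\epsilon_n$; both arguments are otherwise identical.
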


\begin{proof}
Let $n \in \nset$. The result follows from the identity
\begin{equation}
\label{eq:lr-identity}
\frac{\tgdn{n}}{\tgdstar{n}}=
\overline{\PE}^*_n\cond{\frac{\bgdn{n}}{\bgdstar{n}}}{\tilde\mcf_n}\quad \PP^*_n\-\as;
\end{equation}
indeed, since, by \eqref{eq:lr-identity},
$$
\overline{\PP}^*_n\lr{\frac{\tgdn{n}}{\tgdstar{n}}=0,\,\frac{\bgdn{n}}{\bgdstar{n}}>0} = 0 \eqsp,
$$
it holds that
$$
\overline{\PP}^*_n\lr{\frac{\bgdn{n}}{\bgdstar{n}}> n^2 \frac{\tgdn{n}}{\tgdstar{n}}}=
\overline{\PP}^*_n\lr{\frac{\bgdn{n}}{\bgdstar{n}}> n^2
  \frac{\tgdn{n}}{\tgdstar{n}},\,\frac{\tgdn{n}}{\tgdstar{n}}\neq0}  \eqsp.
$$
Hence, using the Markov inequality, we get
$$
\overline{\PP}^*_n\lr{\frac{\bgdn{n}}{\bgdstar{n}}> n^2 \frac{\tgdn{n}}{\tgdstar{n}}}
\leq
n^{-2}
\,\overline{\PE}^*_n\lrb{\lr{\left.{\frac{\bgdn{n}}{\bgdstar{n}}}\right/{\frac{\tgdn{n}}{\tgdstar{n}}}}
\,\indiceTxt{\frac{\tgdn{n}}{\tgdstar{n}}\neq0}}\eqsp.
$$
By conditioning on $\tilde \mcf_n$ and reapplying~\eqref{eq:lr-identity}, we conclude that the expectation on
the right hand side of the previous display is equal to
$\PP^*_n\lr{\frac{\tgdn{n}}{\tgdstar{n}} \neq0}\leq1$.
We thus get that
$$
\overline{\PP}^*_n\lr{\frac{\bgdn{n}}{\bgdstar{n}}> n^2 \frac{\tgdn{n}}{\tgdstar{n}}}
\leq  n^{-2}\eqsp.
$$
Since $\overline{\PP}^*_n$ coincides with $\PP$ on $\overline{\mcf}_n$, the
Borel-Cantelli lemma gives that
$$
\PP\lr{\frac{\bgdn{n}}{\bgdstar{n}}> n^2\, \frac{\tgdn{n}}{\tgdstar{n}}  \quad
  \text{i.o.}} = 0.
$$
This implies~\eqref{eq:ratio-image-lemma} and concludes the proof.
\end{proof}

\subsection{$\PP$-remoteness of $\thv$-missing compact sets }

Note that \Cref{thm:general:post:const} does not rely on the existence of
a \emph{true parameter} $\thv$. This only appears in
\Cref{rem:post:const} where we assume the existence of a
parameter $\thv$ such that closed sets not containing this parameter are
approximately $\PP$-remote. In this section we relate the notion of
$\PP$-remoteness to the consistency of approximate maximum likelihood
estimators. The first step is to relate the true density $\gdstar{n}$ to a true
parameter by assuming that $\gdstar{n}$ and $\gd{\thv}{n}$ \emph{merge with probability one}
in the sense of \cite[Definition 1]{barron:1988}, i.e.,
\begin{equation}
  \label{eq:merge-at-thetastar}
\lim_{n \to \infty} \frac1n\log \frac{\gd{\thv}{n}}{\gdstar{n}}= 0 \eqspeq \PP\as
\end{equation}

Let $A \in \mct$ be a closed set that does not contain
$\thv$. In this section we show that $A\cap K$ is $\PP$-remote for every
compact $K \in \mct$ on which one is able to establish the consistency of approximate maximum
likelihood estimators. For most models of interest, this is actually possible
for all compact sets $K$, and consequently $A$ is approximately $\PP$-remote
whenever the uniform $\PP\as$ tightness property of the posterior distribution
holds true (see \Cref{rem:tightness}).

\begin{definition} \label{def:AMLE}
Let  $K \in \mct$ be compact.
We say that $\sequence{\hat \theta}[n][\nset]$ is a sequence of \emph{approximate maximum likelihood
estimators} (AMLEs) \emph{on} $K$ if it is $\sequence{\mcf}[n][\nset]$-adapted and for all $n\in\nset$, $\hat \theta_n \in K$ and
\begin{align*}
n^{-1}\log \gd{\hat \theta_n}{n} \geq n^{-1}\log \gdstar{n} +\epsilon_n
 \quad\text{with}\quad
  \lim_{n\to\infty}\epsilon_n=0\quad\PP\as
\end{align*}
\end{definition}

\begin{proposition} \label{prop:equiv:AMLE}
Let $K \in \mct$ be compact and suppose
that there exists $\thv\in K$ such that~\eqref{eq:merge-at-thetastar} holds.
Then the two following assertions are equivalent.
\begin{enumerate}[label=(\roman*)]
\item\label{item:1} All sequences $\sequence{\hat \theta}[n][\nset]$ of AMLEs on $K$ are
  strongly consistent.
\item\label{item:2} For all closed sets $A$ not containing $\thv$,
  \begin{align}\label{eq:AMLE-eq}
\limsup_{n \to \infty} \sup_{\theta \in A\cap K} \frac1n \log  \frac{\gd{\theta}{n}}{\gdstar{n}}
< 0  \eqspeq  \PP\as
  \end{align}
\end{enumerate}
Suppose that one of these assertions holds true and, in addition, that $\prior(K)<\infty$.
Then, for all closed sets $A$
not containing $\thv$, the set $A \cap K$ is $\PP$-remote.
\end{proposition}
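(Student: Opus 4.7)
The plan is to establish the equivalence (i)$\iff$(ii) by two independent arguments, and then to derive the $\PP$-remoteness of $A\cap K$ from (ii) together with $\prior(K)<\infty$. For (ii)$\Rightarrow$(i), I would argue by contradiction. Let $(\hat\theta_n)_{n\in\nset}$ be any AMLE sequence on $K$ and suppose $\hat\theta_n\not\to\thv$ on an event $\Omega_0$ with $\PP(\Omega_0)>0$. Exhausting $K\setminus\{\thv\}$ by the countable family $A_k=\{\theta\in K:\,d(\theta,\thv)\geq 1/k\}$, $k\in\nsetpos$, one sees that on $\Omega_0$ there is, for each $\omega$, some random $k=k(\omega)$ with $\hat\theta_n(\omega)\in A_k$ infinitely often, and a countable union argument furnishes a fixed $k$ for which this occurs on a set of positive probability. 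Restricting to the corresponding subsequence, the AMLE bound $n^{-1}\log\gd{\hat\theta_n}{n}/\gdstar{n}\geq \epsilon_n\to 0$ forces
\[
\limsup_{n\to\infty}\sup_{\theta\in A_k\cap K}\frac{1}{n}\log\frac{\gd{\theta}{n}}{\gdstar{n}}\geq 0
\]
on an event of positive probability, contradicting (ii) applied to the closed set $A_k$.

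For the converse (i)$\Rightarrow$(ii), I would fix a closed $A$ with $\thv\notin A$ (hence $d(\thv,A)>0$) and assume toward contradiction that the event
\[
\Omega_0=\Bigl\{\limsup_{n\to\infty}\sup_{\theta\in A\cap K}\frac{1}{n}\log\frac{\gd{\theta}{n}}{\gdstar{n}}\geq 0\Bigr\}
\]
has positive probability. The joint measurability of $(\theta,\omega)\mapsto\gd{\theta}{n}(\omega)$ and the compactness of $A\cap K$ supply, via a measurable selection theorem, an $\mcf_n$-adapted selection $\hat\theta_n\in A\cap K$ satisfying $\log\gd{\hat\theta_n}{n}\geq \sup_{\theta\in A\cap K}\log\gd{\theta}{n}-1/n$. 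The key construction is to splice $(\hat\theta_n)$ with the constant sequence $\thv$ into a genuine AMLE on $K$: for a deterministic margin $\alpha_n\searrow 0$ chosen slowly (e.g.\ $\alpha_n=1/\log n$), set
\[
\tilde\theta_n=\begin{cases}\hat\theta_n,& \log\gd{\hat\theta_n}{n}\geq \log\gd{\thv}{n}-\alpha_n,\\ \thv,& \text{otherwise.}\end{cases}
\]
The merging property~\eqref{eq:merge-at-thetastar} together with $\alpha_n/n\to 0$ implies that $(\tilde\theta_n)$ is an AMLE on $K$. Assertion (i) then forces $\tilde\theta_n\to\thv$ $\PP$-a.s., which, since $d(\thv,A)>0$, entails $\tilde\theta_n=\thv$ eventually $\PP$-a.s.; by design, however, the selection rule would pick $\hat\theta_n$ along the subsequences on $\Omega_0$ where the sup approaches zero, yielding the desired contradiction.

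With (ii) secured, the $\PP$-remoteness of $A\cap K$ falls out from the elementary bound
\[
\int_{A\cap K}\frac{\gd{\theta}{n}}{\gdstar{n}}\,\prior(\rmd\theta)\leq \prior(K)\,\sup_{\theta\in A\cap K}\frac{\gd{\theta}{n}}{\gdstar{n}},
\]
valid because $\prior(K)<\infty$. Taking $n^{-1}\log$ on both sides and passing to the $\limsup$, one obtains $\limsup_n n^{-1}\log\int_{A\cap K}\gd{\theta}{n}/\gdstar{n}\,\prior(\rmd\theta)<0$ $\PP$-a.s., which is exactly the definition of $A\cap K$ being $\PP$-remote.

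The main obstacle I anticipate is the delicate passage in (i)$\Rightarrow$(ii) from the naïve conclusion $\limsup\sup\leq 0$ (which the splicing construction yields directly) to the strict inequality $< 0$ demanded by (ii). The margin $\alpha_n$ must be calibrated so that, on the event where the sup subsequentially approaches zero from below, the rule still picks $\hat\theta_n$ infinitely often and forces genuine non-convergence of $\tilde\theta_n$. If the direct splicing falls short of strict negativity, a cleaner alternative is to invoke \Cref{prop:cns:remote} with $B_n=\{\tilde\theta_n\in V\}$ for an open neighborhood $V$ of $\thv$ disjoint from $A$ and $\tilde\theta_n$ an AMLE on $K$ provided by (i); condition~\eqref{eq:cns:thv} is then immediate from $\tilde\theta_n\to\thv$, and the exponential decay~\eqref{eq:cns:theta} can be extracted via a Markov-inequality argument on likelihood ratios combined with the AMLE tolerance $\epsilon_n$.
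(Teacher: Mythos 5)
Your direction (ii)$\Rightarrow$(i) and the final $\PP$-remoteness assertion are correct and coincide with the paper's argument (the paper runs (ii)$\Rightarrow$(i) directly rather than by contradiction, but the content is the same, and the closing bound $\log\int_{A\cap K}\gd{\theta}{n}/\gdstar{n}\,\prior(\rmd\theta)\le\log\prior(K)+\sup_{\theta\in A\cap K}\log(\gd{\theta}{n}/\gdstar{n})$ is exactly the paper's). The gap is in (i)$\Rightarrow$(ii), and it is precisely the one you flag at the end: your spliced sequence $\tilde\theta_n$ is gated by the likelihood comparison $\log\gd{\hat\theta_n}{n}\geq\log\gd{\thv}{n}-\alpha_n$, and this gate can fail to open even when (ii) is false. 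Concretely, suppose $\sup_{\theta\in A\cap K}\log(\gd{\theta}{n}/\gdstar{n})=-\beta_n$ with $\beta_n\to\infty$ but $\beta_n=o(n)$ (say $\beta_n=\sqrt n$). Then $\limsup_n n^{-1}\sup_{\theta\in A\cap K}\log(\gd{\theta}{n}/\gdstar{n})=0$, so (ii) fails; yet for any deterministic margin $\alpha_n$ with $\alpha_n/n\to0$ one can arrange $\beta_n>\alpha_n+|\log(\gd{\thv}{n}/\gdstar{n})|$ eventually, so your rule selects $\tilde\theta_n=\thv$ for all large $n$, the resulting AMLE is trivially consistent, and no contradiction with (i) is produced. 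As you compute, the splicing only delivers $\limsup\le 0$, never the strict inequality, and no calibration of $\alpha_n$ can repair this. Your proposed fallback via \Cref{prop:cns:remote} does not help here either: it targets the $\PP$-remoteness of $A\cap K$ (an integrated statement), not assertion (ii) (a supremum statement), so it cannot salvage the claimed equivalence.

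The fix, which is how the paper argues, is to drop the likelihood threshold altogether. Assume (ii) fails for some closed $A\not\ni\thv$: on an event $\Omega^*$ of positive probability there is a (random) subsequence $(n_k)$ along which $n_k^{-1}\log(\gd{\tilde\theta_{n_k}}{n_k}/\gdstar{n_k})$ has a limit $\ge 0$, where $\tilde\theta_n\in A\cap K$ is your near-maximizer. Define $\hat\theta_n=\tilde\theta_n$ at the times $n=n_k$ on $\Omega^*$ and $\hat\theta_n=\thv$ otherwise, \emph{unconditionally}. The point is that Definition~\ref{def:AMLE} only requires $n^{-1}\log(\gd{\hat\theta_n}{n}/\gdstar{n})\ge\epsilon_n$ with $\epsilon_n\to0$, and $\epsilon_n$ is allowed to be negative; taking $\epsilon_n=\min\bigl(0,\,n^{-1}\log(\gd{\hat\theta_n}{n}/\gdstar{n})\bigr)$ and using~\eqref{eq:merge-at-thetastar} off the subsequence shows this spliced sequence is an AMLE on $K$, and it visits $A$ infinitely often on $\Omega^*$, contradicting (i) since $d(\thv,A)>0$. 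Your threshold was an attempt to certify the AMLE property by comparison with $\gd{\thv}{n}$, but that certification is not needed and is what kills the boundary case.
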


\begin{proof}
We first show that~\ref{item:1} implies~\ref{item:2}.
Suppose that~\ref{item:2} is false; then there exists a closed set $A$  not
containing $\thv$ such that~\eqref{eq:AMLE-eq} does not hold.
For all $n$, let  $\tilde \theta_n\in A\cap K$ be such that
$$
\log  \frac{\gd{\tilde\theta_n}{n}}{\gdstar{n}}\geq
\sup_{\theta \in A\cap K} \log  \frac{\gd{\theta}{n}}{\gdstar{n}} - 1 \eqsp.
$$
Since~\eqref{eq:AMLE-eq} does not hold, there exists, on a set $\Omega^*$ with
$\PP(\Omega^*)>0$, an increasing sequence
$\sequence{n}[k][\nset]$ such that
$$
\lim_{k \to \infty} \frac1{n_k}\log  \frac{\gd{\tilde\theta_{n_k}}{n_k}}{\gdstar{n_k}}\geq0\eqsp.
$$
Define the random variables $\hat{\theta}_n$ by setting, on $\Omega^*$ and if $n=n_k$ for
some $k\in\nset$, $\hat{\theta}_n=\tilde\theta_n$ and $\hat{\theta}_n=\thv$
otherwise. Then by~\eqref{eq:merge-at-thetastar} and the previous display,
$(\hat\theta_n)_{n \in \nset}$ is a sequence of AMLEs; however, it is not strongly consistent, since
$\hat{\theta}_{n_k}=\tilde\theta_{n_k}\in A\cap K\not\ni\thv$ on $\Omega^*$ for all
$k\in\nset$. Hence~\ref{item:1} does not hold.

We now show that~\ref{item:2} implies~\ref{item:1}. Let $A$ a closed set not containing $\thv$,
and let $\sequence{\hat \theta}[n][\nset]$ be a sequence of AMLEs on $K$.
Then $\lim_{n \to \infty} \epsilon_n = 0$ $\PP\as$ with
$$
\epsilon_n \leq \frac{1}{n} \log  \frac{\gd{\hat \theta_{n}}{n}}{\gdstar{n}}\eqsp.
$$
We thus have that
$$
\hat \theta_n \in A \cap K\Longrightarrow
\epsilon_n \leq \sup_{\theta \in A\cap K} \frac{1}{n} \log  \frac{\gd{\theta}{n}}{\gdstar{n}} \eqsp.
$$
Now~\ref{item:2} and the limit $\lim_{n \to \infty} \epsilon_n = 0$ $\PP\as$ imply that
$$
\epsilon_n > \sup_{\theta \in A\cap K} \frac{1}{n} \log
\frac{\gd{\theta}{n}}{\gdstar{n}}
$$
eventually, $\PP$-a.s. 
The previous implication therefore shows that $\hat \theta_n \in K\setminus A$
eventually, $\PP\as$ The proof is completed by taking $A=\{\theta\in\Theta :
d(\theta,\thv)<1/p\}^c$ for any positive integer $p$, which shows that
$\hat\theta_n$ is strongly consistent.

The last assertion of \Cref{prop:equiv:AMLE} follows immediately from the bound
$$
\log
        \left( \int_{A\cap K} \frac{\gd{\theta}{n}}{\gdstar{n}}\;
        \prior(\rmd \theta)   \right)
\leq \log\prior(K)+\sup_{\theta \in A\cap K} \log  \frac{\gd{\theta}{n}}{\gdstar{n}}\;.
$$
\end{proof}

\subsection{Proof of \Cref{thm:general:post:const} }
\label{subsec:proof:general:post:const}
\newcommand{\ef}[2]{F_{#1,#2}}
We preface the proof of \Cref{thm:general:post:const} by the following lemma.

\begin{lemma} \label{eq:lemma:merging:w:pr:one}
    Under \refhyp[A]{ass:general}, for all $\varepsilon > 0$,
    \begin{equation} \label{eq:merge:one}
    \PP \left( \int \prior(\rmd \theta) \, \frac{\gd{\theta}{n}}{\gdstar{n}}
        \leq \e^{- \varepsilon n} \quad\text{i.o.} \right) = 0 \eqsp.
    \end{equation}
\end{lemma}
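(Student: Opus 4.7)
The plan is to choose $\delta<\varepsilon/2$ (say $\delta=\varepsilon/3$) and work with the set $\Theta_\delta$ provided by \refhyp[A]{ass:general}. Fixing this $\delta$, the pointwise conclusion $\liminf_n n^{-1}\log(\gd{\theta}{n}/\gdstar{n})\geq -\delta$ rearranges to $\liminf_n (\gd{\theta}{n}/\gdstar{n})\,\e^{2\delta n}=+\infty$ for each $\theta\in\Theta_\delta$, $\PP\as$ The goal is then to integrate this divergence against $\lambda$ restricted to $\Theta_\delta$, and from $\lambda(\Theta_\delta)>0$ deduce that $\int_{\Theta_\delta}\lambda(\rmd\theta)(\gd{\theta}{n}/\gdstar{n})\,\e^{2\delta n}\to\infty$, $\PP\as$, which yields the lower bound $\int\lambda(\rmd\theta)\,\gd{\theta}{n}/\gdstar{n}\geq\e^{-2\delta n}>\e^{-\varepsilon n}$ eventually.

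The delicate point, and likely the main obstacle, is that \refhyp[A]{ass:general} provides a $\PP$-null set that may depend on $\theta$: we must replace the statement \emph{``for every $\theta\in\Theta_\delta$, $\PP\as$''} by the stronger \emph{``$\PP\as$, for $\lambda$-a.e.\ $\theta\in\Theta_\delta$''}. For this I would invoke the standing joint measurability of $(\theta,\omega)\mapsto \gd{\theta}{n}(\omega)$, which makes $(\theta,\omega)\mapsto\liminf_n n^{-1}\log(\gd{\theta}{n}(\omega)/\gdstar{n}(\omega))$ jointly measurable. Setting
\[
N\eqdef\ensemble{(\theta,\omega)\in\Theta_\delta\times\Omega}{\liminf_n n^{-1}\log(\gd{\theta}{n}(\omega)/\gdstar{n}(\omega))<-\delta},
\]
the assumption says that each $\theta$-section of $N$ is $\PP$-null, so by Fubini $(\lambda\otimes\PP)(N)=0$, and hence for $\PP$-a.e.\ $\omega$ the $\omega$-section of $N$ is $\lambda$-null. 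Thus, $\PP\as$, for $\lambda$-a.e.\ $\theta\in\Theta_\delta$ we have $(\gd{\theta}{n}/\gdstar{n})\,\e^{2\delta n}\to+\infty$.

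The remainder is routine. Working on the full-$\PP$-measure event just obtained, Fatou's lemma applied to the non-negative integrand $(\gd{\theta}{n}/\gdstar{n})\,\e^{2\delta n}$ over $\Theta_\delta$ gives
\[
\liminf_{n\to\infty}\int_{\Theta_\delta}\lambda(\rmd\theta)\,\frac{\gd{\theta}{n}}{\gdstar{n}}\,\e^{2\delta n}\;\geq\;\int_{\Theta_\delta}\lambda(\rmd\theta)\,\liminf_{n\to\infty}\frac{\gd{\theta}{n}}{\gdstar{n}}\,\e^{2\delta n}\;=\;+\infty,
\]
using $\lambda(\Theta_\delta)>0$ in the last equality. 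Consequently, $\PP\as$, there exists $n_0$ (depending on $\omega$) such that for all $n\geq n_0$,
\[
\int\lambda(\rmd\theta)\,\frac{\gd{\theta}{n}}{\gdstar{n}}\;\geq\;\int_{\Theta_\delta}\lambda(\rmd\theta)\,\frac{\gd{\theta}{n}}{\gdstar{n}}\;\geq\;\e^{-2\delta n}\;>\;\e^{-\varepsilon n}.
\]
This precisely says that the event inside the probability in \eqref{eq:merge:one} occurs only finitely often $\PP\as$, proving the lemma.
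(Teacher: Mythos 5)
Your proof is correct and follows essentially the same route as the paper's: restrict to $\Theta_\delta$ for a suitable $\delta<\varepsilon$ (the paper takes $\delta=\varepsilon/2$ and the multiplier $\e^{\varepsilon n}$, you take $\delta=\varepsilon/3$ and $\e^{2\delta n}$), upgrade the pointwise-in-$\theta$ almost-sure divergence to a $\lambda\otimes\PP$ statement via Fubini/Tonelli, and conclude with Fatou's lemma using $\lambda(\Theta_\delta)>0$. Your explicit treatment of the $\theta$-dependent null sets is exactly the content of the paper's expectation-of-the-indicator step.
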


\begin{remark}
According to the terminology used in \cite[Definition 1]{barron:1988}, the property \eqref{eq:merge:one} says that $\int \prior(\rmd \theta) \, \gd{\theta}{n}$ and $\gdstar{n}$ merge with probability one.
\end{remark}


\begin{proof}
    Pick $\varepsilon > 0$ and write
    $$
        \e^{\varepsilon n} \int \prior(\rmd \theta) \, \frac{\gd{\theta}{n}}{ \gdstar{n}}
        \geq \int_{\Theta_{\varepsilon/2}} \prior(\rmd \theta) \,
        \ef{\theta}{n} \eqsp,
    $$
    where $\Theta_{\varepsilon/2}$ is defined in \refhyp[A]{ass:general} and for all $\theta \in \Theta$ and $n \in \nsetpos$, 
    $$
        \ef{\theta}{n} \eqdef \exp \left( \varepsilon n
        + \log \frac{\gd{\theta}{n}}{\gdstar{n}} \right) \eqsp.
    $$
    By assumption, for all $\theta \in \Theta_{\varepsilon/2}$,
    $\liminf_n \ef{\theta}{n} = \infty$ $\PP$-a.s., and the proof is
    completed by establishing that, $\PP$-a.s.,
    \begin{equation} \label{eq:diverging:integral}
        \liminf_{n \to \infty} \int_{\Theta_{\varepsilon/2}} \prior(\rmd \theta) \,
        \ef{\theta}{n} = \infty \eqsp.
    \end{equation}
    To check \eqref{eq:diverging:integral}, note that, by Fubini's theorem,
    $$
        \PE \left[ \int_{\Theta_{\varepsilon/2}} \prior(\rmd \theta) \,
        \indiceTxt{\liminf_{n \to \infty} \ef{\theta}{n} < \infty} \right]
        = \int_{\Theta_{\varepsilon/2}} \prior(\rmd \theta) \,
        \PP \left( \liminf_{n \to \infty} \ef{\theta}{n} < \infty \right) = 0 \eqsp.
    $$
    Hence, $\PP$-a.s.,
    $$
        \int_{\Theta_{\varepsilon/2}} \prior(\rmd \theta) \,
        \indiceTxt{\liminf_{n \to \infty} \ef{\theta}{n} < \infty} = 0 \eqsp,
    $$
    and, consequently, by Fatou's lemma and the fact that
    $\prior(\Theta_{\varepsilon/2}) > 0$,
    $$
        \liminf_{n \to \infty} \int_{\Theta_{\varepsilon/2}}
        \prior(\rmd \theta) \, \ef{\theta}{n}
        \geq \int_{\Theta_{\varepsilon/2}} \prior(\rmd \theta) \,
        \liminf_{n \to \infty} \ef{\theta}{n} \indiceTxt{\liminf_{n \to \infty} \ef{\theta}{n} = \infty}
        = \infty \eqsp,
    $$
    $\PP$-a.s., which completes the proof.
\end{proof}


Using the previous lemma, the proof of \Cref{thm:general:post:const}
is straightforward. Indeed, let $A \in \mct$ be approximately $\PP$-remote;
then for all
$\varepsilon > 0$ there exists $K_\varepsilon \in \mct$ such that
$\PP$-a.s.,
$$
    \limsup_{n \to \infty} \prior_n(A) \leq \limsup_{n \to \infty}
    \prior_n( A \cap K_\varepsilon)
    + \varepsilon \eqsp.
$$
To treat further the right hand side above, let
\begin{equation} \label{eq:bidule}
    \alpha \eqdef - \limsup_{n \to \infty} n^{-1} \log \left( 
    \int_{A \cap K_\varepsilon} \prior(\rmd \theta) \,
    \frac{\gd{\theta}{n}}{\gdstar{n}} \right) \eqsp,
\end{equation}
which is $\PP$-a.s. positive by \Cref{def:admissible}\ref{item:ass:Compact}, and write
\begin{equation} \label{eq:posterior:first:term:decomposition}
    \prior_n(A \cap K_\varepsilon) = \left( \e^{\alpha n / 2}
    \int_{A \cap K_\varepsilon} \prior(\rmd \theta) \,
    \frac{\gd{\theta}{n}}{\gdstar{n}} \right) \left( \e^{\alpha n / 2} \int \prior(\rmd \theta) \, \frac{\gd{\theta}{n}}{\gdstar{n}}
    \right)^{-1} \eqsp.
\end{equation}
Here, by \Cref{eq:lemma:merging:w:pr:one},
$$
    \limsup_{n \to \infty} \left( \e^{\alpha n / 2} \int \prior(\rmd \theta) \, \frac{\gd{\theta}{n}}{\gdstar{n}}
    \right)^{-1} \leq 1  \eqspeq\PP\mbox{-a.s.,}
$$
and applying this bound together with \eqref{eq:bidule} and
\eqref{eq:posterior:first:term:decomposition} yields, $\PP$-a.s.,
$$
    \limsup_{n \to \infty} \prior_n(A \cap K_\varepsilon) = 0 \eqsp.
$$
Consequently, $\limsup_n \prior_n(A) \leq \varepsilon$ $\PP$-a.s., and as $\varepsilon$ was picked arbitrarily
we may conclude the proof.


\section{Proof of main results}
\label{sec:proof-main-results}
\subsection{Preliminaries}

We use the general setting detailed in \Cref{sec:general-setting} for proving the results in \Cref{sec:main:results}. In \Cref{sec:exple-iid}, also the i.i.d. case is embedded into the general setting for completeness. Here the $\sigma$-finite measure $\nu_n$ is defined
similarly (see~\eqref{eq:nu-n-def-canonic}), but in the present case, for a given initial distribution
$\Xinit$, we define, for all $\theta\in\Theta$ and $n\in \nsetpos$, $\gd{\theta}{n}$
as the density of ${\PP^\theta_\Xinit}|_{\mcf_n}$ w.r.t. $\nu_n$, i.e., it
satisfies, for all $B={[\chunk Y1n]}^{-1}(A)\in\mcf_n$ with
$A\in\Ysigma^{\tensprod n}$,
\begin{equation}
  \label{eq:gd-def-pomm}
\int_{B} \gd{\theta}{n}\;\rmd\nu_n = \PP^\theta_\Xinit(B)=
\PP^\theta_\Xinit(\chunk Y1n\in A)\eqsp.
\end{equation}
This density is simply given by
$$
\gd{\theta}{n} = \py{\theta}{\eta}{\chunk Y1n} \eqsp.
$$
\begin{remark}\label{rem:merging}
In \Cref{sec:exple-iid} the true density is among the targeted ones,
$\gdstar{n}=\gd{\thv}{n}$. Although we here assume a \emph{true parameter}
$\thv\in\Theta$, we do not suppose that $\gdstar{n}=\gd{\thv}{n}$. The main reason is that in
the case of {\fdPOMM}s, the initial distribution $\Xinit$
in~\eqref{eq:gd-def-pomm} is chosen arbitrarily, often for computational
convenience.  More specifically, here $\gd{\thv}{n}=\py{\thv}{\Xinit}{\chunk Y1n}$, where the initial distribution
$\Xinit$ used in practice when computing the likelihood is chosen arbitrarily as one generally
 different from the \emph{true} initial distribution.
Concerning the latter we will instead consider $\gdstar{n}=\py{\thv}{\piv}{\chunk Y1n}$ (i.e. the true initial distribution is the
invariant one).
\end{remark}
Following this remark, the first thing to check is the merging
property~\eqref{eq:merge-at-thetastar} with
$\gdstar{n}=\py{\thv}{\piv}{\chunk Y1n}$ and
$\gd{\thv}{n}=\py{\thv}{\Xinit}{\chunk Y1n}$, i.e.,
\begin{equation}
  \label{eq:merging:pomm}
  \lim_{n \to \infty} \frac1n\log \frac{\py{\thv}{\Xinit}{\chunk Y1n}}
{\py{\thv}{\piv}{\chunk Y1n}}= 0 \eqspeq {\PP^{\thv}_{\piv}} \as
\end{equation}
As we will see below, this condition is implied by the following assumption.
\begin{hyp}{B}
\item\label{ass:pomm:equiv} For all $\theta\in\Theta$ and all initial
  distributions $\eta$, the distribution of $\chunk{Y}1{\infty}$ under
  $\PP_{\Xinit}^{\theta}$ admits a positive density with respect to the
  distribution of $\chunk{Y}1{\infty}$ under $\PP_{\pi_\theta}^{\theta}$, i.e.,
$$
R^\theta_{\Xinit} \eqdef \frac{\rmd \PP_{\Xinit}^{\theta}(\chunk{Y}1{\infty}\in\cdot)}
{\rmd \PP_{\pi_\theta}^{\theta}(\chunk{Y}1{\infty}\in\cdot)}>0
\quad {\PP^{\theta}_{\pi_\theta}}\mbox{-a.s.}
$$
\end{hyp}
We now state a result that will be shown to apply under the various sets of
assumptions in \Cref{sec:proof-crefthm:main} and \Cref{sec:proof-crefthm:hmm}
\begin{proposition}\label{prop:preliminaries}
  Consider a \fdPOMM\ satisfying \refhyp[B]{ass:stat:dist}[ass:amle:pomc] and
  \refhyp[B]{ass:pomm:equiv}.  Let $\lambda$ be a Radon measure on $\Theta$, 
  $\thv\in\Theta$ and define, for all $\chunk{y}{1}{n} \in \Yset^n$, $\post{\chunk{y}{1}{n}}$
  by~\eqref{eq:post-def-pomc}.  Let us
  consider the following conditions.
  \begin{enumerate}[label=(\roman*),resume=preliminaries-ass]
  \item  \label{ass:pomm-Keps} For all $\epsilon>0$, there exists a compact set $K_\epsilon \in \mct$ such that
$$\limsup_{n\to\infty}\post{\chunk{Y}{1}{n}}(K_\epsilon^c)\leq \epsilon\quad \PP^{\thv}_{\piv}\as  $$
  \item  \label{ass:pomm-general} Assumption~\refhyp[A]{ass:general} holds with
$\gd{\theta}{n}=\py{\theta}{\Xinit}{\chunk{Y}{1}{n}}$, $\gdstar{n} =
\py{\thv}{\piv}{\chunk{Y}{1}{n}}$ and $\PP=\PP^\thv_\piv$.
  \item  \label{ass:pomm-general-positive} For $n$ large enough, we have
$\py{\theta}{\Xinit}{\chunk{y}{1}{n}}>0$ for $\nu^{\tensprod n}\pp$
$\chunk y1n\in\Yset^n$ and $\prior\pp$ $\theta\in\Theta$.
\end{enumerate}
Then~\ref{ass:pomm-Keps} implies
  \begin{enumerate}[label=(\alph*),resume=preliminaries]
  \item  \label{assertion:bayesian-consistency:pomm-premote}  All closed sets
    $A \in \mct$ that do not contain $\thv$ are approximately $\PP^{\thv}_{\piv}$-remote.
  \end{enumerate}
and~\ref{ass:pomm-Keps}--\ref{ass:pomm-general-positive} imply
  \begin{enumerate}[label=(\alph*),resume=preliminaries]
\item   \label{assertion:bayesian-consistency:pomm} For all initial
  distributions $\Xinitv$,
$\displaystyle\post{\chunk{Y}{1}{n}}  \weakconv{n \to \infty} \delta_\thv$, $\PP^\thv_\Xinitv\as$
\end{enumerate}
\end{proposition}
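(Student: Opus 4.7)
The plan is to work in the abstract framework of Section~\ref{sec:gener-approach} with the identifications $\gd{\theta}{n} = \py{\theta}{\Xinit}{\chunk Y1n}$, $\gdstar{n} = \py{\thv}{\piv}{\chunk Y1n}$, $\PP = \PP^\thv_\piv$ and $\mcf_n = \sigma(\chunk Y1n)$, then transfer the conclusions from $\PP^\thv_\piv$ to an arbitrary $\PP^\thv_\Xinitv$ at the very end via the positive density $R^\thv_\Xinitv$ furnished by \refhyp[B]{ass:pomm:equiv}.

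The first preparatory step is to establish the merging property~\eqref{eq:merging:pomm}, which plays the role of \eqref{eq:merge-at-thetastar} in the present setting. For each $n$, the ratio $\py{\thv}{\Xinit}{\chunk Y1n}/\py{\thv}{\piv}{\chunk Y1n}$ is the Radon--Nikodym derivative of $\PP^\thv_\Xinit|_{\mcf_n}$ with respect to $\PP^\thv_\piv|_{\mcf_n}$, and by \refhyp[B]{ass:pomm:equiv} it coincides with $\PE^\thv_\piv[R^\thv_\Xinit \mid \mcf_n]$. L\'evy's upward convergence theorem therefore yields that this ratio converges $\PP^\thv_\piv$-a.s. to the finite, $\PP^\thv_\piv$-a.s.\ positive variable $R^\thv_\Xinit$; taking the logarithm and dividing by $n$ delivers~\eqref{eq:merging:pomm}.

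To prove assertion~\ref{assertion:bayesian-consistency:pomm-premote}, I fix a closed $A \in \mct$ with $\thv \notin A$ and an $\varepsilon > 0$. By~\ref{ass:pomm-Keps} there is a compact $K_\varepsilon$ with $\limsup_n \post{\chunk Y1n}(K_\varepsilon^c) \leq \varepsilon$, $\PP^\thv_\piv$-a.s. Set $K'_\varepsilon \eqdef K_\varepsilon \cup \{\thv\}$: it is compact, contains $\thv$, and $\prior(K'_\varepsilon) < \infty$ since $\prior$ is Radon. By \refhyp[B]{ass:amle:pomc}, every AMLE sequence on $K'_\varepsilon$ is strongly $\PP^\thv_\piv$-consistent; combined with the merging established above, \Cref{prop:equiv:AMLE} yields that $A \cap K'_\varepsilon$ is $\PP^\thv_\piv$-remote. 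Since $A \cap K_\varepsilon \subseteq A \cap K'_\varepsilon$, the bound defining $\PP$-remoteness transfers unchanged (the integrand is non-negative), so $A \cap K_\varepsilon$ is $\PP^\thv_\piv$-remote as well, giving approximate $\PP^\thv_\piv$-remoteness per \Cref{def:admissible}.

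For assertion~\ref{assertion:bayesian-consistency:pomm}, combining the just-obtained remoteness with condition~\ref{ass:pomm-general}, \Cref{thm:general:post:const} gives $\lim_n \post{\chunk Y1n}(A) = 0$ $\PP^\thv_\piv$-a.s.\ for every closed $A$ not containing $\thv$. Condition~\ref{ass:pomm-general-positive} together with~\ref{ass:pomm-general} (via \Cref{eq:lemma:merging:w:pr:one}, which prevents the denominator of~\eqref{eq:post-def-pomc} from vanishing) ensures that $\post{\chunk Y1n}$ is, $\PP^\thv_\piv$-a.s., a well-defined probability measure for $n$ large; \Cref{rem:post:const} applied with $A_p = \{\theta : d(\theta,\thv) \geq 1/p\}$ upgrades the numerical limit to weak convergence $\post{\chunk Y1n} \weakconv{n} \delta_\thv$, $\PP^\thv_\piv$-a.s. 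Finally, because the event of convergence is $\sigma(\chunk Y1\infty)$-measurable and \refhyp[B]{ass:pomm:equiv} makes $\PP^\thv_\Xinitv$ equivalent to $\PP^\thv_\piv$ on this $\sigma$-field, the same a.s.\ convergence holds under $\PP^\thv_\Xinitv$ for every initial distribution $\Xinitv$. The main subtlety is the careful bookkeeping of three distinct initial distributions ($\piv$ for the analytical arguments, the user-chosen $\Xinit$ inside the likelihood, and the ``true'' $\Xinitv$), which is precisely what~\refhyp[B]{ass:pomm:equiv} is designed to handle.
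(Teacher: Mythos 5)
Your argument follows the paper's own route almost step for step: the merging property~\eqref{eq:merging:pomm} via the martingale identity $\py{\thv}{\Xinit}{\chunk Y1n}/\py{\thv}{\piv}{\chunk Y1n}=\PE^\thv_\piv\left[R^\thv_\Xinit\mid \chunk Y1n\right]$ and L\'evy's upward theorem, $\PP^\thv_\piv$-remoteness of $A\cap K_\varepsilon$ via \Cref{prop:equiv:AMLE} combined with \refhyp[B]{ass:amle:pomc}, then \Cref{thm:general:post:const} and \Cref{rem:post:const}, and finally the transfer from $\PP^\thv_\piv$ to $\PP^\thv_\Xinitv$ by the absolute continuity on $\sigma(\chunk Y1\infty)$ supplied by \refhyp[B]{ass:pomm:equiv}. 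The explicit replacement of $K_\varepsilon$ by $K_\varepsilon\cup\{\thv\}$ so that \Cref{prop:equiv:AMLE} applies, and the observation that remoteness passes to subsets, are welcome details that the paper leaves implicit.

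There is one small but genuine gap, in the verification that $\post{\chunk Y1n}$ is eventually a well-defined probability measure. You justify only the \emph{positivity} of the denominator of~\eqref{eq:post-def-pomc} (via~\ref{ass:pomm-general-positive} and \Cref{eq:lemma:merging:w:pr:one}), but not its \emph{finiteness}, which is not automatic here because $\prior$ is only assumed to be a Radon measure and may have infinite total mass (improper prior); a denominator equal to $+\infty$ would make the ratio degenerate even though it is positive. The paper closes this by splitting $\Theta$ into $K_\varepsilon$ and $K_\varepsilon^c$: finiteness of $\int_{K_\varepsilon^c}\prior(\rmd\theta)\,\py{\theta}{\Xinit}{\chunk Y1n}$ is extracted from condition~\ref{ass:pomm-Keps}, while finiteness of $\int_{K_\varepsilon}\prior(\rmd\theta)\,\py{\theta}{\Xinit}{\chunk Y1n}$ follows from $\prior(K_\varepsilon)<\infty$ together with Tonelli's theorem, which gives $\nu^{\tensprod n}$-a.e.\ and hence $\PP^\thv_\piv$-a.s.\ finiteness. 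Without this step, \Cref{rem:post:const} cannot be invoked, since it presupposes that $\post{\chunk Y1n}$ is a probability measure for $n$ large enough. Everything else in your argument is sound.
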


\begin{proof}
If~\eqref{eq:merging:pomm} holds, then, setting $\PP=\PP^{\thv}_{\piv}$, \Cref{prop:equiv:AMLE}
and~\refhyp[B]{ass:amle:pomc} give immediately that $A\cap K$ is $\PP$-remote for
all closed sets $A$ not containing $\thv$.
Hence, in order to establish~\ref{assertion:bayesian-consistency:pomm-premote}, we only need to check that~\refhyp[B]{ass:pomm:equiv}
implies~\eqref{eq:merging:pomm}. For this purpose, assume~\refhyp[B]{ass:pomm:equiv}
and set $R^*\eqdef R^\thv_{\Xinit}$, which then is $\PP\as$ positive 
Then $\PE^\thv_\piv[R^*]=1$ and
\begin{equation}
  \label{eq:pomm:equiv:n-finite}
  \frac{\py{\thv}{\eta}{\chunk Y1n}}{\py{\thv}{\piv}{\chunk Y1n}}=\PE^\thv_\piv\left[ R^* \mid \chunk Y1n\right]\quad \PP\as,
\end{equation}
from which we conclude that the left hand side converges to $R^*>0$ $\PP\as$
and in $L^1$ (see, e.g., \cite[Theorem~27.3]{jacod:protter:2000}). This
implies~\eqref{eq:merging:pomm}
and~\ref{assertion:bayesian-consistency:pomm-premote} follows.

We now assume, additionally,~\ref{ass:pomm-general} and~\ref{ass:pomm-general-positive}. Then, by
\Cref{thm:general:post:const} and \Cref{rem:post:const}, it suffices, in order to
obtain~\eqref{eq:weak-conv-bayesan-consistency}, to check
that $\lambda_n=\post{\chunk{Y}{1}{n}}$ is, $\PP\as$, a well-defined probability for $n$ large
enough. By~\ref{ass:pomm-Keps} there is a compact set $K$ such that
$\int_{K^c} \lambda(\rmd \theta) \, \gd{\theta}{n} <\infty$ for $n$ large enough
$\PP\as$, and since $\lambda$ is a Radon measure, it also holds that $\int_K
\lambda(\rmd \theta) \, \gd{\theta}{n} <\infty$ $\nu_n$-almost everywhere and thus
$\PP\as$ Hence $\int_{\Theta} \lambda(\rmd \theta) \, \gd{\theta}{n} <\infty$ for
$n$ large enough, $\PP\as$ By \Cref{rem:true-posterior-consistency}, it only
remains to check that $\int_{\Theta} \lambda(\rmd \theta) \, \gd{\theta}{n}>0$
$\PP\as$, which is directly implied by~\ref{ass:pomm-general-positive}.

Hence, we get that
$\post{\chunk{Y}{1}{n}}$ is a well-defined probability for $n$ large enough
$\PP\as$ This yields~\eqref{eq:weak-conv-bayesan-consistency}, which corresponds to
Assertion~\ref{assertion:bayesian-consistency:pomm} in the special case
$\Xinitv=\piv$. Under~\refhyp[B]{ass:pomm:equiv}, this also implies
Assertion~\ref{assertion:bayesian-consistency:pomm} for all initial
distributions  $\Xinitv$.
\end{proof}

\subsection{Proof of \Cref{thm:main:result}}
\label{sec:proof-crefthm:main}
Before proving \Cref{thm:main:result}, we need two preliminary results. The
first is a change of probability formula (see \Cref{lem:radon} below), which
will be used for extending the posterior consistency property to a non-stationary
sequence. The second result allows \refhyp[A]{ass:general} to be checked in order to apply
\Cref{thm:general:post:const} (see \Cref{lem:check:A1} below).

\begin{lemma} \label{lem:radon}
If a \fdPOMM\ satisfies \refhyp[B]{ass:stat:dist} and
\refhyp[B]{ass:q:positive}, then it also satisfies~\refhyp[B]{ass:pomm:equiv}.
\end{lemma}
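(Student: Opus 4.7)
The two measures $\PP^\theta_\eta$ and $\PP^\theta_{\pi_\theta}$ on the canonical space share the same Markov transition kernel $\Tk[\theta]$ and differ only through the law of $Z_0$. Consequently, the laws of $Z_{1:\infty}$ under them differ only in the marginal of $Z_1$, which admits the density $(\eta Q_\theta)(\cdot) = \int \eta(\rmd z_0)\,q_\theta(z_0,\cdot)$ under $\PP^\theta_\eta$ and $\pi_\theta(\cdot)$ under $\PP^\theta_{\pi_\theta}$, both with respect to $\mu\tensprod\nu$. The plan is to establish mutual absolute continuity of the laws of $Z_{1:\infty}$ and then project onto $\sigma(Y_{1:\infty})$ via a conditional expectation.

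The first step is to show that both $\pi_\theta$ and $\eta Q_\theta$ are strictly positive $\mu\tensprod\nu$-a.e. For $\pi_\theta$ this follows from the invariance equation $\pi_\theta(z)=\int \pi_\theta(z')q_\theta(z',z)\,(\mu\tensprod\nu)(\rmd z')$ combined with \refhyp[B]{ass:q:positive}: the integrand is $\pi_\theta\tensprod(\mu\tensprod\nu)$-a.e. positive, hence so is the integral. For $\eta Q_\theta$, we use that \refhyp[B]{ass:q:positive} provides, for every $z_0$, a version of $q_\theta(z_0,\cdot)$ that is $\mu\tensprod\nu$-a.e. positive, whence by Fubini $(\eta Q_\theta)(z)>0$ for $\mu\tensprod\nu$-a.e. $z$.

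Next, exploit the Markov property. Since, under both measures, the conditional distribution of $Z_{2:\infty}$ given $Z_1$ is that of the chain initialized at $Z_1$ and driven by $\Tk[\theta]$, the restriction of $\PP^\theta_\eta$ to $\sigma(Z_{1:\infty})$ is equivalent to that of $\PP^\theta_{\pi_\theta}$, with Radon-Nikodym derivative
$$
L \eqdef \frac{(\eta Q_\theta)(Z_1)}{\pi_\theta(Z_1)},
$$
which is strictly positive $\PP^\theta_{\pi_\theta}$-a.s.\ by the previous paragraph. Projecting onto $\sigma(Y_{1:\infty})\subset\sigma(Z_{1:\infty})$ via disintegration yields
$$
R^\theta_\eta = \PE^\theta_{\pi_\theta}\!\left[L \,\big|\, Y_{1:\infty}\right],
$$
and this conditional expectation of a strictly positive random variable is itself strictly positive $\PP^\theta_{\pi_\theta}$-a.s.: indeed, if $R^\theta_\eta$ vanished on a set $A\in\sigma(Y_{1:\infty})$ of positive $\PP^\theta_{\pi_\theta}$-measure, then $L$ would vanish on $A$, contradicting $L>0$ a.s.

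The main technical subtlety lies in the first step, namely verifying positivity of $\eta Q_\theta$ for an arbitrary, possibly singular initial distribution $\eta$. This rests on the reading of \refhyp[B]{ass:q:positive} that provides, for every fixed $z_0$, a $\mu\tensprod\nu$-a.e. positive version of $q_\theta(z_0,\cdot)$. If only a joint version is available, the same scheme applies after replacing one step by two: the Chapman-Kolmogorov identity $q_\theta^{(2)}(z_0,z_2)=\int q_\theta(z_0,z_1)q_\theta(z_1,z_2)\,(\mu\tensprod\nu)(\rmd z_1)$ delivers positivity of $q_\theta^{(2)}(z_0,\cdot)$ for every $z_0$, and one then works on $\sigma(Z_{2:\infty})$ and eventually on $\sigma(Y_{2:\infty})$, which suffices since the law of $Y_1$ under $\PP^\theta_\eta$ is $\mu\tensprod\nu$-absolutely continuous and carries no additional singularity.
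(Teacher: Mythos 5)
Your proof is correct and follows essentially the same route as the paper's: both identify the Radon--Nikodym derivative of the law of $Z_{1:\infty}$ under $\PP^\theta_\eta$ with respect to that under $\PP^\theta_{\pi_\theta}$ as the ratio $\int\eta(\rmd z_0)\,\Td[\theta](z_0,Z_1)\big/\pi_\theta(Z_1)$ of the one-step marginal densities of $Z_1$, which is positive by \refhyp[B]{ass:q:positive}, and then restrict to $\sigma(Y_{1:\infty})$. Your write-up merely spells out two steps the paper leaves implicit (the positivity of both densities for an arbitrary, possibly singular $\eta$, and the fact that conditioning a positive density on $\sigma(Y_{1:\infty})$ preserves positivity), so no further comment is needed.
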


\begin{proof}
  Let $\theta\in\Theta$ and $\Xinit$ be an initial distribution on $(\Zset,
  \Zsigma)$.  Under $\PP^{\theta}_{\Xinit}$, $\sequence{Z}[k][\nsetpos]$ is a
  Markov chain with transition kernel $\Tk[\theta]$ and initial distribution
  having the density
$$
z_1 \mapsto \Xinit_1(z_1) =\int \Xinit(\rmd z) \, \Td[\theta](z,z_1)
$$
\wrt\  $\mu \tensprod \nu$. Similarly, under $\PP^{\theta}_{\pi_\theta}$, $\sequence{Z}[k][\nsetpos]$ is a Markov chain with the same transition kernel $\Tk[\theta]$ as above but with another initial distribution having the density
$$
z_1 \mapsto \pi_\theta(z_1) =\int \pi_\theta(\rmd z) \, \Td[\theta](z,z_1)
$$
\wrt\  $\mu \tensprod \nu$.

Under \refhyp[B]{ass:q:positive}, these two densities are positive and
$\chunk{z}{1}{\infty} \mapsto \eta_1(z_1)/\pi_\theta(z_1)$ is thus the Radon-Nikodym
ratio between $\PP^{\theta}_{\Xinit}$ and $\PP^{\theta}_{\pi_\theta}$ restricted to
$\sigma(\chunk Z1{\infty})$. The result follows.
\end{proof}

\begin{lemma}\label{lem:check:A1}
  Consider a\ \fdPOMM\ satisfying~\refhyp[B]{ass:stat:dist}. Then
  Assumptions~\refhyp[B]{ass:q:positive} and \refhyp[B]{ass:kullback} imply
  \refhyp[A]{ass:general} with
  $\gd{\theta}{n}=\py{\theta}{\Xinit}{\chunk{Y}{1}{n}}$, $\gdstar{n} =
  \py{\thv}{\piv}{\chunk{Y}{1}{n}}$, $\PP=\PP^\thv_\piv$ and
  $\Theta_\delta=\ens{\theta \in \Theta}{\D(\thv, \theta)\leq\delta}$.
\end{lemma}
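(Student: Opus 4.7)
The plan is to verify \refhyp[A]{ass:general} with $\Theta_\delta=\ens{\theta\in\Theta}{\D(\thv,\theta)\leq\delta}$. The positivity $\prior(\Theta_\delta)>0$ is exactly \refhyp[B]{ass:kullback}, so the whole argument reduces to showing, for each fixed $\theta\in\Theta_\delta$, that
$$\liminf_{n\to\infty} n^{-1}\log\frac{\py{\theta}{\eta}{\chunk{Y}{1}{n}}}{\py{\thv}{\piv}{\chunk{Y}{1}{n}}}\geq -\delta \qquad \PP^\thv_\piv\as$$

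To avoid handling the observation densities directly (which entail integration over the hidden component), I would lift the problem to the bivariate chain by applying \Cref{lem:image-density-lemma} with $\overline{\mcf}_n=\sigma(\chunk{Z}{1}{n})$, $\tilde\mcf_n=\mcf_n=\sigma(\chunk{Y}{1}{n})$, and $\overline{\nu}_n$ the image of $(\mu\tensprod\nu)^{\tensprod n}$ under $\chunk{Z}{1}{n}$. The joint distributions $\PP^\theta_\eta$ and $\PP^\thv_\piv$, restricted to $\overline{\mcf}_n$, admit explicit product-form densities $\bgdn{n}$ and $\bgdstar{n}$ which factor cleanly as a boundary term times a product of transition densities. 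The lemma then provides $\epsilon_n\to 0$, $\PP^\thv_\piv\as$, such that
$$n^{-1}\log\frac{\py{\theta}{\eta}{\chunk{Y}{1}{n}}}{\py{\thv}{\piv}{\chunk{Y}{1}{n}}}\geq \frac{1}{n}\log\frac{(\eta \Tk[\theta])(Z_1)}{\piv(Z_1)}+\frac{1}{n}\sum_{\ell=1}^{n-1}\log\frac{\Td[\theta](Z_\ell,Z_{\ell+1})}{\Td[\thv](Z_\ell,Z_{\ell+1})}+\epsilon_n.$$
The initial boundary term is $\PP^\thv_\piv\as$ finite---the numerator is positive by \refhyp[B]{ass:q:positive} and both densities are finite $\piv\as$---and hence vanishes after normalization by $n$.

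For the Cesàro average, note that $\sequence{Z}[\ell][\nset]$ is stationary under $\PP^\thv_\piv$ and is ergodic because \refhyp[B]{ass:stat:dist} asserts that $\piv$ is the \emph{unique} invariant measure. To apply Birkhoff I need the integrand $f(z,z')\eqdef \log(\Td[\theta](z,z')/\Td[\thv](z,z'))$ to lie in $L^1(\PP^\thv_\piv)$; the elementary inequality $(\log u)^+\leq u$ together with $\int\Td[\theta](z_0,\cdot)\,\rmd(\mu\tensprod\nu)=1$ delivers $\PE^\thv_\piv[f^+]\leq 1$, while the definition~\eqref{eq:defDelta} gives $\PE^\thv_\piv[-f]=\D(\thv,\theta)\leq\delta$, controlling $\PE^\thv_\piv[f^-]$. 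Birkhoff's ergodic theorem then yields
$$\frac{1}{n}\sum_{\ell=1}^{n-1}\log\frac{\Td[\theta](Z_\ell,Z_{\ell+1})}{\Td[\thv](Z_\ell,Z_{\ell+1})}\longrightarrow -\D(\thv,\theta)\geq -\delta \qquad \PP^\thv_\piv\as,$$
and combining the three contributions closes the argument. The only delicate step is securing the $L^1$ integrability of $f$ needed by Birkhoff: the positivity \refhyp[B]{ass:q:positive} keeps the log-ratio well-defined, and the $(\log u)^+\leq u$ trick is what upgrades the finiteness of $\D(\thv,\theta)$ to genuine integrability. Everything else is routine bookkeeping.
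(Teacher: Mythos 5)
Your proposal is correct and follows essentially the same route as the paper: lifting to the complete chain via \Cref{lem:image-density-lemma}, splitting off the boundary term, and invoking ergodicity of $\sequence{Z}[n][\nset]$ under $\PP^\thv_\piv$ to identify the Ces\`aro limit with $-\D(\thv,\theta)\geq-\delta$. The only difference is that you make explicit the $L^1$ integrability check (via $(\log u)^+\leq u$ and the finiteness of $\D(\thv,\theta)$) needed for Birkhoff's theorem, a point the paper's proof passes over silently; this is a welcome addition rather than a deviation.
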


\begin{proof}
Pick $\delta>0$ and take any $\theta\in\Theta$ such that $\D(\thv,
  \theta)\leq\delta$. To prove the result, we need to show that
  \begin{equation}
    \label{eq:fully-dom-kullback-toprove}
\liminf_{n \to \infty} n^{-1} \log \frac{\gd{\theta}{n}}{\gdstar{n}}
\geq - \delta  \quad \PP\as\eqsp,
  \end{equation}
where $\gd{\theta}{n}=\py{\theta}{\Xinit}{\chunk{Y}{1}{n}}$ and $\gdstar{n}=\py{\thv}{\piv}{\chunk{Y}{1}{n}}$.
We apply \Cref{lem:image-density-lemma} with $\Omega=\Zset^\nset$,
$\mcf=\Zsigma^{\tensprod\nset}$, $\overline{\mcf}_n=\sigma(\chunk Z1n)$,
$\PP=\PP^{\thv}_{\piv}$ and $\overline{\nu}_n$ given (as
in~\eqref{eq:nu-n-def-canonic}) by
$$
\overline\nu_n(B) = (\mu \tensprod \nu)^{\tensprod n}(A) \eqsp, \ B\in\overline\mcf_n
\text{ with $B={[\chunk Z1n]}^{-1}(A)$ and $A\in\Zsigma^{\tensprod n}$}\eqsp.
$$
Then $\bgdn{n}$ and $\bgdstar{n}$  are the corresponding densities
$$
\bgdn{n}=\int \Td[\theta](z_0, Z_1) \, \eta(\rmd z_0) \, \prod_{k=1}^{n-1}
\Td[\theta](Z_k, Z_{k+1})\eqsp,
$$
and
$$
\bgdstar{n}=\piv(Z_1) \prod_{k=1}^{n-1}
\Td[\thv](Z_k,Z_{k+1})\eqsp.
$$
Moreover, set $\tilde\mcf_n=\sigma(\chunk Y 1n)$, so that $\nu_n$ is the
restriction of $\overline{\nu}_n$ to $\tilde\mcf_n$. In this
case that the densities introduced in~\eqref{eq:ratio-image-lemma-new-density}
are $\tgdn{n}=\gd{\theta}{n}$ and $\tgdstar{n}=\gdstar{n}$. Thus, applying
\Cref{lem:image-density-lemma}, we get
that~\eqref{eq:fully-dom-kullback-toprove} is implied by
\begin{equation}
  \label{eq:to-show-fully-dominatedA1}
\liminf_{n\to\infty}  \frac1n\log  \frac{\bgdn{n}}{\bgdstar{n}} \geq
-\delta \quad \PP^\thv_{\piv}\-\as
\end{equation}
Now, observe that, $\PP^\thv_\piv\as$,
$$
\log \frac{\bgdn{n}}{\bgdstar{n}}
=
\log\frac
{\int \Td[\theta](z_0,Z_1)\;\eta(\rmd z_0)}
{\piv(Z_1)}
+\sum_{\ell = 1}^{n-1} \log\frac
{\Td[\theta](Z_\ell,Z_{\ell+1})}
{\Td[\thv](Z_\ell,Z_{\ell+1})} \eqsp.
$$
Since the transition density $\Td[\theta]$ is assumed to be positive, the first
term is a finite number and tends to zero when divided by $n$, $\PP^\thv_{\piv}\-\as$
Moreover, by~{\refhyp[B]{ass:stat:dist}}\,, $Z$ is ergodic under $\PP^\thv_{\piv}$, and we obtain
$$
\liminf_{n\to\infty}n^{-1}\sum_{\ell = 1}^{n-1} \log\frac
{\Td[\theta](Z_\ell,Z_{\ell+1})}
{\Td[\thv](Z_\ell,Z_{\ell+1})} =-\D(\thv,\theta) \quad \PP^\thv_{\piv}\-\as
$$
Since we have assumed that $\D(\thv,\theta)\leq\delta$, \eqref{eq:to-show-fully-dominatedA1} holds true and the proof is completed.
\end{proof}

The proof of \Cref{thm:main:result} is now completed by observing that
\Cref{lem:radon,lem:check:A1} show that the assumptions of
\Cref{thm:main:result} imply those of \Cref{prop:preliminaries}.

Note that~\ref{ass:pomm-Keps} in \Cref{prop:preliminaries} is trivially
satisfied when $\Theta$ is compact, and that~\ref{ass:pomm-general-positive}
directly follows from ~\refhyp[B]{ass:q:positive}.

\subsection{Proof of \Cref{thm:hmm}}
\label{sec:proof-crefthm:hmm}
The only modification of the proof consists in observing that
Condition~\ref{ass:pomm-general-positive} in \Cref{prop:preliminaries}
now directly follows
from~\refhyp[C]{ass:fully:dominated:restricted}[ass:g:positive]
 and in showing that the conclusions of
\Cref{lem:radon} and \Cref{lem:check:A1} hold true under the new set of
assumptions. This is done in \Cref{lem:radon:hmm} and
\Cref{lem:check:A1:hmm} below.

\begin{lemma} \label{lem:radon:hmm} If a \fdPOMM\
  satisfies~\refhyp[B]{ass:stat:dist},~\refhyp[C]{ass:fully:dominated:restricted},~\refhyp[C]{ass:g:positive}
  and~\refhyp[C]{ass:TVforgetting}, then it also
  satisfies~\refhyp[B]{ass:pomm:equiv}.
\end{lemma}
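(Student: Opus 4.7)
My plan is to parallel the proof of \Cref{lem:radon}, replacing the positivity of $\Td[\theta]$ by the HMM factorisation $\Td[\theta](z,z')=\Tdx[\theta](x,x')\Tdy[\theta](x',y')$ from \refhyp[C]{ass:fully:dominated:restricted} together with the emission positivity \refhyp[C]{ass:g:positive}, and to invoke \refhyp[C]{ass:TVforgetting} to handle the residual $X$-marginal. Under \refhyp[C]{ass:fully:dominated:restricted} the joint density of $(Z_1,\ldots,Z_n)$ under $\PP^{\theta}_\eta$ with respect to $(\mu\tensprod\nu)^{\tensprod n}$ factorises as $\tilde\eta_\theta(x_1)\,\Tdy[\theta](x_1,y_1)\prod_{k=2}^{n}\Tdx[\theta](x_{k-1},x_k)\,\Tdy[\theta](x_k,y_k)$, where $\tilde\eta_\theta(x_1)=\int\eta(\rmd z_0)\,\Tdx[\theta](x_0,x_1)$ depends on $\eta$ only through its $X$-marginal; the density under $\PP^{\theta}_{\pi_\theta}$ is obtained by replacing $\tilde\eta_\theta$ with the stationary $X$-marginal density. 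The common $\Tdy[\theta]$ factors are $\nu$-a.s.\ strictly positive by \refhyp[C]{ass:g:positive} and hence cancel in the formal likelihood ratio, so the whole asymmetry is carried by the $X$-marginal.

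Observing that the law of $\chunk{Y}{1}{\infty}$ under $\PP^{\theta}_\eta$ depends on $\eta$ only through its $X$-marginal $\eta^X$, the next step is to use the mixture representations $\PP^{\theta}_\eta(\chunk{Y}{1}{\infty}\in\cdot)=\int\eta^X(\rmd x_0)\,\PP^{\theta}_{\delta_{x_0}}(\chunk{Y}{1}{\infty}\in\cdot)$ and its analogue for $\PP^{\theta}_{\pi_\theta}$ with $\pi_\theta(\cdot\times\Yset)$ in place of $\eta^X$. It then suffices to prove that $\PP^{\theta}_{\delta_{x_0}}(\chunk{Y}{1}{\infty}\in\cdot)\sim\PP^{\theta}_{\pi_\theta}(\chunk{Y}{1}{\infty}\in\cdot)$ for $\eta^X$-a.e.\ and $\pi_\theta(\cdot\times\Yset)$-a.e.\ $x_0\in\Xset$; mutual absolute continuity of these point-mass laws then transfers to the mixtures, yielding \refhyp[B]{ass:pomm:equiv}.

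This reduced claim is handled via \refhyp[C]{ass:TVforgetting}. The convergence $\|\delta_{x_0}\Tx[\theta]^n-\pi_\theta(\cdot\times\Yset)\|_{\mathrm{TV}}\to 0$ yields a standard coalescing coupling of the $X$-chains starting from $\delta_{x_0}$ and from $\pi_\theta(\cdot\times\Yset)$ that agree from a finite random time $T<\infty$ onwards; coupling the emissions through a common source of noise makes the corresponding $Y$-chains coincide for all $n>T$, so the $Y$-laws agree on $\sigma(Y_{T+1:\infty})$, while on the finite initial segment $\chunk{Y}{1}{T}$ positivity of $\Tdy[\theta]$ (from \refhyp[C]{ass:g:positive}) ensures that the conditional densities are strictly positive. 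A martingale/Kakutani-type combination of these two facts then shows that the likelihood ratio $\py{\theta}{\delta_{x_0}}{\chunk{Y}{1}{n}}/\py{\theta}{\pi_\theta}{\chunk{Y}{1}{n}}$, a strictly positive $\PP^{\theta}_{\pi_\theta}$-martingale, is uniformly integrable and converges a.s.\ to a strictly positive limit playing the role of the desired Radon-Nikodym derivative. The principal obstacle is carrying out this last $L^1$ control rigorously: TV forgetting on its own does not imply uniform integrability, so the coalescing coupling must be exploited quantitatively together with the emission positivity to control both the head and the tail of the likelihood ratio.
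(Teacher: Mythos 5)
Your overall strategy is the same as the paper's: use \refhyp[C]{ass:TVforgetting} to build a coalescing coupling of the two hidden chains and \refhyp[C]{ass:g:positive} to handle the finitely many time steps before coalescence. (The detour through point masses $\delta_{x_0}$ and the mixture representation is harmless but unnecessary; the paper couples the chains started from the $X$-marginals of $\eta$ and $\pi_\theta$ directly.) However, your final step contains a genuine gap, which you yourself flag: you try to conclude by showing that the likelihood-ratio martingale $\py{\theta}{\delta_{x_0}}{\chunk{Y}{1}{n}}/\py{\theta}{\pi_\theta}{\chunk{Y}{1}{n}}$ is uniformly integrable with a strictly positive limit, and you correctly observe that TV forgetting plus emission positivity do not obviously deliver this $L^1$ control. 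Moreover, the intermediate claim that ``the $Y$-laws agree on $\sigma(Y_{T+1:\infty})$'' is not a well-posed statement about the two canonical laws, since the coalescence time $T$ lives on the coupling space, not on $\Yset^{\nsetpos}$.

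The paper (following van Handel's Lemma~3.7) sidesteps the uniform-integrability issue entirely by working on the \emph{enlarged} space carrying $\sequencen{(X'_k,X''_k,Y_k)}[k\in\nset]$. There, one compares the law in which the $Y_k$ are emitted from $X'_k$ with the law in which they are emitted from $X''_k$; since $X'_k=X''_k$ for all $k>\tau$ with $\tau$ a.s.\ finite, the Radon--Nikodym derivative between these two laws on the full infinite-product $\sigma$-field is the explicit finite product $\prod_{k\leq\tau}\Tdy[\theta](X''_k,Y_k)/\Tdy[\theta](X'_k,Y_k)$, which is a.s.\ positive and finite by \refhyp[C]{ass:g:positive}. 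Equivalence of two measures on a $\sigma$-field is inherited by every sub-$\sigma$-field (the restricted density is the conditional expectation of the full one, hence remains a.s.\ positive), so restricting to $\sigma(\chunk{Y}{1}{\infty})$ yields \refhyp[B]{ass:pomm:equiv} with no martingale convergence or $L^1$ argument needed. If you replace your Kakutani-type step by this restriction argument, your proof closes and coincides with the paper's.
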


\begin{proof}
  Let $\theta\in\Theta$ and $\Xinit$ be an initial distribution on
  $(\Zset, \Zsigma)$. Then under
  Assumptions~\refhyp[B]{ass:stat:dist},~\refhyp[C]{ass:fully:dominated:restricted}
  and~\refhyp[C]{ass:TVforgetting} there exists a sequence
  $\sequencen{(X'_k,X''_k,Y_k)}[k\in\nset]$ such that
  \begin{enumerate}
  \item   $\sequencen{(X'_k,Y_k)}[k\in\nset]$ is distributed according to
  $\PP_{\pi_\theta}^\theta$,
\item    $\sequencen{(X''_k,Y_k)}[k\in\nset]$ is distributed according to
  $\PP_{\Xinit}^\theta$,
\item there is a ${\PP^\theta_{\pi_\theta}}\as$ finite stopping time $\tau$
  such that $X'_k=X''_k$ for all $k>\tau$.
  \end{enumerate}
  See \cite[Lemma~3.7]{vanhandel:2009} and also the end of
  the proof of \Cref{lem:check:A1:hmm} where the same construction is used.
Then, using~\refhyp[C]{ass:g:positive} and mimicking
\cite[Lemma~3.7]{vanhandel:2009} yields that the laws
of $\sequencen{(X'_k,Y_k)}[k\in\nset]$ and
$\sequencen{(X''_k,Y_k)}[k\in\nset]$ are equivalent, which in its turn imply~\refhyp[B]{ass:pomm:equiv}.
\end{proof}

\begin{lemma}\label{lem:check:A1:hmm}
  Consider a {\fdPOMM}\ with initial
  distribution $\Xinit$ on $(\Zset,\Zsigma)$.  Assume \refhyp[B]{ass:stat:dist}
  and~\refhyp[C]{ass:fully:dominated:restricted} and set
  $\PP=\PP^{\thv}_{\piv}$. Then \refhyp[C]{ass:g:positive}[ass:TVforgetting] imply \refhyp[A]{ass:general} with $\gd{\theta}{n}=\py{\theta}{\Xinit}{\chunk{Y}{1}{n}}$, $\gdstar{n} =
\py{\thv}{\piv}{\chunk{Y}{1}{n}}$, $\PP=\PP^\thv_\piv$ and 
$\Theta_\delta=\ens{\theta\in \Theta}{\Dhmm(\thv, \theta)\leq\delta}$.
\end{lemma}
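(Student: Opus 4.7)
To verify \refhyp[A]{ass:general}, fix $\delta>0$ and $\theta\in\Theta$ with $\Dhmm(\thv,\theta)\leq\delta$; the task is to prove
\[
\liminf_{n\to\infty}n^{-1}\log\lr{\gd{\theta}{n}/\gdstar{n}}\geq-\delta\quad\PP^{\thv}_{\piv}\as,
\]
with $\gd{\theta}{n}=\py{\theta}{\Xinit}{\chunk{Y}{1}{n}}$ and $\gdstar{n}=\py{\thv}{\piv}{\chunk{Y}{1}{n}}$. The plan mirrors \Cref{lem:check:A1}: invoke \Cref{lem:image-density-lemma} to dominate the observable log-ratio by one defined on a larger $\sigma$-field, which is then amenable to the ergodic theorem. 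The obstacle, relative to \Cref{lem:check:A1}, is that $\Tdx[\theta]$ may vanish, which rules out the complete-data enlargement $\chunk{Z}{1}{n}$. The remedy is to enlarge $\Omega=\Zset^\nset$ to $\bar\Omega=\Omega\times\Xset^\nset$ by adjoining an auxiliary coordinate process $\chunk{X'}{}{}$, to set $\bar\mcf_n=\sigma(\chunk{X}{1}{n},\chunk{Y}{1}{n},\chunk{X'}{1}{n})$ and $\bar\nu_n=(\mu\tensprod\nu)^{\tensprod n}\tensprod\mu^{\tensprod n}$, and to extend $\PP^{\thv}_{\piv}$ to a probability $\bar\PP^*$ on $\bar\Omega$ under which $\chunk{X'}{}{}$ is an independent Markov chain with kernel $\Tx[\theta]$ and initial density $\eta^X_{\theta,1}(x)=\int\Tdx[\theta](x_0,x)\,\Xinit(\rmd z_0)$, the $\Xset$-marginal at time one under $\PP^{\theta}_{\Xinit}$.

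Writing $\bgdstar{n}$ for the density of $\bar\PP^*|_{\bar\mcf_n}$ w.r.t.\ $\bar\nu_n$, introduce the alternative probability density
\[
\bgdn{n}=\pivx(x_1)\prod_{k=1}^{n-1}\Tdx[\thv](x_k,x_{k+1})\cdot\eta^X_{\theta,1}(x'_1)\prod_{k=1}^{n-1}\Tdx[\theta](x'_k,x'_{k+1})\prod_{k=1}^n\Tdy[\theta](x'_k,y_k),
\]
in which the observations are generated from $\chunk{X'}{}{}$ rather than from $\chunk{X}{}{}$. A direct computation verifies that $\bgdn{n}$ integrates to one, that its marginal on $\tilde\mcf_n=\sigma(\chunk{Y}{1}{n})$ equals $\gd{\theta}{n}$, and that the marginal of $\bgdstar{n}$ on $\tilde\mcf_n$ equals $\gdstar{n}$. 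The HMM factorisations of $\piv$ and $\Td[\thv]$ cause all transition factors to cancel in the enlarged ratio, yielding
\[
\log\frac{\bgdn{n}}{\bgdstar{n}}=\sum_{k=1}^n\log\frac{\Tdy[\theta](X'_k,Y_k)}{\Tdy[\thv](X_k,Y_k)},
\]
where every summand is $\bar\PP^*$-a.s.\ finite by \refhyp[C]{ass:g:positive}. Consequently \Cref{lem:image-density-lemma} reduces \refhyp[A]{ass:general} to the statement $\liminf_n n^{-1}\sum_{k=1}^n\log(\Tdy[\theta](X'_k,Y_k)/\Tdy[\thv](X_k,Y_k))\geq-\Dhmm(\thv,\theta)$, $\bar\PP^*$-a.s.

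Since $\chunk{X'}{}{}$ is not stationary under $\bar\PP^*$, I couple it, on a further enlargement, with a stationary copy $\chunk{\tilde X'}{}{}$ driven by $\Tx[\theta]$ with marginal $\pix_\theta$ and independent of $(\chunk{X}{}{},\chunk{Y}{}{})$; \refhyp[C]{ass:TVforgetting}, combined with the construction used in the proof of \Cref{lem:radon:hmm} via \cite[Lemma~3.7]{vanhandel:2009}, provides such a coupling with a coalescence time $\tau<\infty$ almost surely. Since each $\log\Tdy[\theta](X'_k,Y_k)$ is a.s.\ finite, $n^{-1}\sum_{k=1}^{\tau\wedge n}[\log\Tdy[\theta](X'_k,Y_k)-\log\Tdy[\theta](\tilde X'_k,Y_k)]\to 0$ a.s., and the problem reduces further to applying Birkhoff's ergodic theorem to the stationary process $(X_k,Y_k,\tilde X'_k)$ with integrand $g(x,y,x')=\log\Tdy[\theta](x',y)-\log\Tdy[\thv](x,y)$. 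Direct computation gives $\PE[g(X_1,Y_1,\tilde X'_1)]=-\Dhmm(\thv,\theta)$, and the classical inequality $\PE_P[\log^+(\rmd P/\rmd Q)]\leq\kullback{P}{Q}+\rme^{-1}$, applied with $P=\Tdy[\thv](x,\cdot)\,\nu$ and $Q=\Tdy[\theta](x',\cdot)\,\nu$ and integrated against $\pivx\tensprod\pix_\theta$, yields $\PE[g^-]\leq\Dhmm(\thv,\theta)+\rme^{-1}<\infty$; joint ergodicity follows because \refhyp[C]{ass:TVforgetting} renders $\tilde X'$ mixing, so the joint process is the product of a mixing chain with the stationary ergodic $(X,Y)$. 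Birkhoff's theorem then delivers $n^{-1}\sum_{k=1}^n g(X_k,Y_k,\tilde X'_k)\to-\Dhmm(\thv,\theta)$ a.s., completing the argument. The main technical difficulty is the construction of the coalescing coupling under \refhyp[C]{ass:TVforgetting}, which is precisely the machinery already invoked in \Cref{lem:radon:hmm}.
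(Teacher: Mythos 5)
Your proof is correct and follows essentially the same route as the paper's: the same auxiliary-chain enlargement with $X'$ driven by $\Tx[\theta]$, the same telescoping of the enlarged likelihood ratio to $\prod_{k}\Tdy[\theta](X'_k,Y_k)/\Tdy[\thv](X_k,Y_k)$, reduction via \Cref{lem:image-density-lemma}, the coalescing coupling with a stationary copy under \refhyp[C]{ass:TVforgetting}, and Birkhoff applied to the stationary ergodic triple (the paper's \Cref{lem:ergodic:indep:product} plays the role of your ``product of mixing with ergodic'' remark). The only substantive addition is your explicit one-sided integrability bound $\PE[g^-]\leq\Dhmm(\thv,\theta)+\rme^{-1}$ for the ergodic theorem, a point the paper leaves implicit.
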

\begin{proof}
Pick $\delta>0$ and take any $\theta\in\Theta$ such that $\Dhmm(\thv,
  \theta)\leq\delta$. By~\refhyp[C]{ass:kullback:hmm} it is sufficient to show that
  \begin{equation}
\liminf_{n \to \infty} n^{-1} \log \frac{\gd{\theta}{n}}{\gdstar{n}}
\geq - \delta  \quad \PP\as\eqsp,
  \end{equation}
  where $\gd{\theta}{n}= \py{\theta}{\Xinit}{\chunk Y1n}$ and $\gdstar{n}= \py{\thv}{\piv}{\chunk Y1n}$.  The idea of the proof is now similar to
  that of \Cref{lem:check:A1}, but with a completely different augmented set of
  variables. Instead of augmenting the data by just the unobserved sequence $(X_k)$, we
  now add one more sequence $(X'_k)$ to the data as follows.  Let $\Zset'=\Xset^2\times\Yset$ and $\Zsigma'=\Xsigma^{\tensprod2}\tensprod\Ysigma$ and denote, for all
  $k\in\nset$, by $Z_k=(X_k,Y_k)$ and $Z'_k=(X_k,X'_k,Y_k)$ the members of the corresponding canonical sequences. We define $\PP$ as the distribution of $\sequence{Z'}[n][\nset]$
  when $\sequence{Z}[n][\nset]$ is distributed according to $\PP^{\thv}_{\piv}$,
  $\sequence{X'}[n][\nset]$ is the canonical Markov chain with initial distribution
  $\Xinit$ and kernel $\Tx[\theta]$ and, moreover, $\sequence{X'}[n][\nset]$ and
  $\sequence{Z}[n][\nset]$ are independent.

We apply \Cref{lem:image-density-lemma} with $\Omega=\Zset^{\prime\nset}$,
$\mcf=\Zsigma^{\prime\tensprod\nset}$, $\overline{\mcf}_n=\sigma(Z'_{1:n})$ and $\overline{\nu}_n$ given (as
in~\eqref{eq:nu-n-def-canonic}) by
$$
\overline\nu_n(B)=(\mu^{\tensprod2}\tensprod\nu)^{\tensprod n}(A)\eqsp, \ B\in\overline\mcf_n
\text{ with $B={[Z'_{1:n}]}^{-1}(A)$ and $A\in\Zsigma^{\prime\tensprod n}$}\eqsp.
$$
In this particular setting, $\bgdstar{n}$ takes the form
$$
\bgdstar{n}=\piv(Z_1)\int\Tx[\theta](x'_0,X'_1) \, \Xinit(\rmd x'_0) \prod_{k=1}^{n-1}
\Td[\thv](Z_k,Z_{k+1})\Tdx[\theta](X'_k,X'_{k+1}) \eqsp.
$$
Now, define $\PP_{\theta}$ as the distribution of  $\sequence{Z'}[n][\nset]$
  when $\sequence{X}[n][\nset]$ and $\sequence{X'}[n][\nset]$ are distributed exactly as under $\PP$
  (i.e., two independent Markov chains with kernels  $\Tx[\thv]$ and $\Tx[\theta]$
  and initial distributions $\piv$ and $\Xinit$, respectively), but, conditionally
  on these sequences, $\sequence{Y}[n][\nset]$ has the law of a sequence
  of independent random variables such that for all $k$, $Y_k$ has density
$\Tdy[\theta](X'_k,\cdot)$ with respect to $\nu$. Hence, $\sequencen{(X'_k,Y_k)}[k\in\nset]$ is an HMM
with parameter $\theta$ and initial distribution $\Xinit$.
Recall that we define  $\bgdn{n}$ as the density of the distribution
$\PP_{\theta,n}$, which in its turn is defined as the restriction of $\PP_{\theta}$ on
$\overline{\mcf}_n$. Consequently,
\begin{multline*}
\bgdn{n}=\piv(X_1,\Yset) \int\Tx[\theta](x'_0,X'_1) \, \Xinit(\rmd x'_0)
\\\times \left( \prod_{k=1}^{n-1}
\Tdx[\thv](X_k,X_{k+1})\Tdx[\theta](X'_k,X'_{k+1})\Tdy[\theta](X'_k,Y_k) \right) \Tdy[\theta](X'_n,Y_n)\eqsp,
\end{multline*}
where $\piv(\cdot,\Yset)$ denotes the marginal of the density $\piv$ w.r.t. the $\Xset$ component.
It now holds that
$$
\frac{\bgdn{n}}{\bgdstar{n}}=
 \prod_{k=1}^{n}\frac{\Tdy[\theta](X'_k,Y_k)}{\Tdy[\thv](X_k,Y_k)}\eqsp.
$$
Now, set $\tilde{\mcf}_n=\sigma(\chunk Y1n)$; then, defining
$\tgdn{n}$ and $\tgdstar{n}$ as in~\eqref{eq:ratio-image-lemma-new-density} yields straightforwardly that $\tgdn{n}=\gd{\theta}{n}$ and $\tgdstar{n}=\gdstar{n}$.
Hence by \Cref{lem:image-density-lemma},
Eqn.~\eqref{eq:fully-dom-kullback-toprove} follows from
  \begin{equation}
    \label{eq:fully-dom-kullback-toprove-now}
\liminf_{n \to \infty} n^{-1} \log \frac{\bgdn{n}}{\bgdstar{n}}
\geq - \delta  \quad \PP\as\eqsp,
  \end{equation}
whose establishment is the object of the remainder of the proof.

We prove \eqref{eq:fully-dom-kullback-toprove-now} by means of a coupling
argument used in \cite[Lemma~3.7]{vanhandel:2009}.  Recall that under $\PP$,
$\sequence{Z}[n][\nset]$ and $\sequence{X'}[n][\nset]$ are independent. Thus,
by Condition~\refhyp[C]{ass:TVforgetting}\,, following \cite[Theorem
III.14.10]{lindvall:1992}, we extend $\PP$ by adding an $\Xset$-valued process
$\sequence{X''}[n][\nset]$ to $\sequence{Z'}[n][\nset]$ such that
$\sequence{X''}[n][\nset]$ is independent of $\sequence{Z}[n][\nset]$, has
distribution $\PP^\theta_{\pi_\theta}$ and
$$
\tau:=\min\{k \in \nset : X''_\ell=X'_\ell\text{ for all }\ell\geq k\}<\infty\quad \PP\as
$$
Then by~\refhyp[C]{ass:g:positive} we have, for all $n\geq\tau$,
$$
\frac{\bgdn{n}}{\bgdstar{n}}=
\left[ \prod_{k=1}^{\tau}\frac{\Tdy[\theta](X'_k,Y_k)}{\Tdy[\thv](X_k,Y_k)} \prod_{k=1}^{\tau}\frac{\Tdy[\thv](X_k,Y_k)}{\Tdy[\theta](X''_k,Y_k)}\right]
 \prod_{k=1}^{n}\frac{\Tdy[\theta](X''_k,Y_k)}{\Tdy[\thv](X_k,Y_k)}\eqsp,
$$
where the term within the brackets is positive $\PP\as$
Now, by \Cref{lem:ergodic:indep:product}, $\sequencen{(X_k,Y_k,X''_k)}[k\in\nset]$ is a
stationary ergodic Markov chain under $\PP$. Hence,
$$
\lim_{n \to \infty} n^{-1} \log \frac{\bgdn{n}}{\bgdstar{n}}=
\mathbb{E} \left[ \log \frac{\Tdy[\theta](X''_0,Y_0)}{\Tdy[\thv](X_0,Y_0)} \right] \quad\PP\as,
$$
where $\mathbb{E}$ denotes the expectation under $\PP$. 
To conclude, we observe that the latter expectation is exactly minus
$\Dhmm(\thv,\theta)$ as defined in~\eqref{eq:def-dhmm}, and thus the choice of
$\theta$ at the beginning of this proof
gives~\eqref{eq:fully-dom-kullback-toprove-now}.
\end{proof}

\subsection{Proof of \Cref{thm:noncompact}}
\label{sec:proof-crefthm:tightness}

Since Condition~\ref{ass:pomm-Keps} in \Cref{prop:preliminaries} is trivially
satisfied in the case where $\Theta$ is assumed to be compact, the proofs of
\Cref{thm:main:result} and \Cref{thm:hmm} only
required~\refhyp[B]{ass:pomm:equiv} and Condition~\ref{ass:pomm-general} of
\Cref{prop:preliminaries} to be checked.  The latter two assumptions are still
implied by those of \Cref{thm:noncompact}; however, since $\Theta$ is no longer
assumed to be compact, it remains to show that Condition~\ref{ass:pomm-Keps} in
\Cref{prop:preliminaries} still holds under the assumptions of
\Cref{thm:noncompact}. This will be done in \Cref{prop:hors:compact} below.

We preface this result with two lemmas.
  \begin{lemma}
    \label{lemma:noncompactcond-n0-all-n}
Consider a \fdPOMM\ satisfying \refhyp[B]{ass:stat:dist} and let $\lambda$ be a
positive measure on $(\Theta, \mct)$.
Then each condition in \refhyp[B]{ass:additive} implies the same condition with $n_0$ replaced
  by any $n\geq n_0$.
\end{lemma}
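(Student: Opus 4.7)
I plan to treat the two conditions in~\refhyp[B]{ass:additive} separately, using induction on $n \geq n_0$ for~\eqref{eq:B6-1} and a direct stationarity argument for~\eqref{eq:B6-2}.

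For~\eqref{eq:B6-1}, my starting point is that $\py{\theta}{\Xinit}{\chunk y1{n+1}}$ integrates against $\nu(\rmd y_{n+1})$ to $\py{\theta}{\Xinit}{\chunk y1n}$. Applying Fubini, the induction hypothesis $\int\lambda(\rmd\theta)\,\py{\theta}{\Xinit}{\chunk Y1n}<\infty$, $\PP^\thv_\piv\as$, implies that, on this almost sure event, $\int\lambda(\rmd\theta)\,\py{\theta}{\Xinit}{(\chunk Y1n,y_{n+1})}<\infty$ for $\nu$-almost every $y_{n+1}$. To convert this into a $\PP^\thv_\piv\as$ statement on $Y_{n+1}$, I would invoke the fact that the regular conditional distribution of $Y_{n+1}$ given $\chunk Y1n$ under $\PP^\thv_\piv$ admits the density $\py{\thv}{\piv}{y_{n+1}\mid\chunk Y1n}$ with respect to $\nu$---well-defined since $\py{\thv}{\piv}{\chunk Y1n}>0$ $\PP^\thv_\piv\as$---so that $\nu$-null sets remain null under this conditional distribution. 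Iterating from the hypothesis at $n=n_0$ then yields~\eqref{eq:B6-1} for every $n\geq n_0$.

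For~\eqref{eq:B6-2}, the plan is to combine stationarity of $\sequence{Y}[k][\nsetpos]$ under $\PP^\thv_\piv$ with the elementary fact that enriching the conditioning sigma-field can only increase the expected log-conditional density. For any $n\geq n_0$, I would decompose $\chunk Y1{n-1}=(\chunk Y1{n-n_0},\chunk Y{n-n_0+1}{n-1})$ and apply the non-negativity of the corresponding conditional Kullback-Leibler divergence to obtain
\begin{equation*}
\PE^\thv_\piv\lrb{\log\py{\thv}{\piv}{Y_n\mid\chunk Y1{n-1}}}\geq \PE^\thv_\piv\lrb{\log\py{\thv}{\piv}{Y_n\mid\chunk Y{n-n_0+1}{n-1}}}\eqsp.
\end{equation*}
By stationarity of the joint process, the right-hand side coincides with $\PE^\thv_\piv[\log\py{\thv}{\piv}{Y_{n_0}\mid\chunk Y1{n_0-1}}]$, which is finite by hypothesis, and the claim follows.

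The arguments are otherwise routine; the only delicate point is the $\nu$-a.e.\ to $\PP^\thv_\piv\as$ transfer in the first part, but as noted this reduces immediately to the absolute continuity of the true regular conditional distribution.
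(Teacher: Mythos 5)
Your proposal is correct, and both halves rest on the same underlying facts as the paper's proof, though the mechanics differ slightly. For~\eqref{eq:B6-1} the paper jumps directly from $n_0$ to $n$: it integrates out $\chunk{y}{n_0+1}{n}$ against $\nu^{\tensprod(n-n_0)}$, recognises the result as a conditional expectation of the ratio $\int \prior(\rmd\theta)\,\py{\theta}{\Xinit}{\chunk{Y}{1}{n}}/\py{\thv}{\piv}{\chunk{Y}{n_0+1}{n}\mid\chunk{Y}{1}{n_0}}$ given $\chunk{Y}{1}{n_0}$, and concludes via \Cref{lem:zarbi} (finiteness of a conditional expectation forces a.s.\ finiteness of the variable). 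Your one-step induction with Tonelli plus absolute continuity of the one-step conditional law of $Y_{n+1}$ given $\chunk{Y}{1}{n}$ w.r.t.\ $\nu$ is the same idea in disguise --- in both cases the whole content is that the true conditional law of the future observations is dominated by the reference measure, so a $\nu$-null exceptional set is $\PP^\thv_\piv$-negligible; your version avoids introducing \Cref{lem:zarbi} at the cost of an induction. For~\eqref{eq:B6-2} the paper simply cites the monotonicity of $n\mapsto\PE^\thv_\piv[\log\py{\thv}{\piv}{Y_{n+1}\mid\chunk{Y}{1}{n}}]$ from the discussion preceding Barron's 1985 Theorem~1, whereas you supply the proof: non-negativity of the conditional Kullback--Leibler divergence (enlarging the conditioning $\sigma$-field from $\sigma(\chunk{Y}{n-n_0+1}{n-1})$ to $\sigma(\chunk{Y}{1}{n-1})$ can only increase the expected log-density, the correction term having uniformly integrable negative part bounded by $1/\rme$) combined with stationarity to identify $\PE^\thv_\piv[\log\py{\thv}{\piv}{Y_n\mid\chunk{Y}{n-n_0+1}{n-1}}]$ with $\PE^\thv_\piv[\log\py{\thv}{\piv}{Y_{n_0}\mid\chunk{Y}{1}{n_0-1}}]$. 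This makes the lemma self-contained, which is a genuine (if modest) gain over the citation. Your remark that $\py{\thv}{\piv}{\chunk{Y}{1}{n}}>0$ holds $\PP^\thv_\piv$-a.s.\ without any positivity assumption (the true density cannot vanish on a set of positive probability under the true law) is correct and is exactly what is needed for the null-set transfer.
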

  \begin{proof} 
Let $n> n_0$. Using that for all $\chunk{y}{1}{n_0} \in \Yset^{n_0}$, 
\begin{equation}
  \label{eq:trivial-relation-n-n0}
  \iint
  \prior(\rmd \theta) \,
  \py{\theta}{\Xinit}{\chunk{y}{1}{n}} 
  \, \nu^{\tensprod (n-n_0)}(\rmd \chunk{y}{n_0 + 1}{n})
  =
\int \prior(\rmd \theta) \, \py{\theta}{\Xinit}{\chunk{y}{1}{n_0}},
\end{equation}
and that, under $\PP^\thv_\piv$,
$\py{\thv}{\piv}{\chunk
  y{n_0+1}n \mid \chunk{Y}{1}{n_0}} = \py{\thv}{\piv}{\chunk{Y}{1}{n_0},\chunk
  y{n_0+1}n}/\py{\thv}{\piv}{\chunk{Y}{1}{n_0}}$ is the density, with
respect to $\nu^{\tensprod (n-n_0)}$, of the
conditional distribution of $\chunk Y{n_0+1}n$ given $\chunk{Y}{1}{n_0}$, we get that
$$
\cesp[\piv]{\frac{\int \prior(\rmd \theta) \,
    \py{\theta}{\Xinit}{\chunk{Y}{1}{n}}}{
\py{\thv}{\piv}{\chunk
  Y{n_0+1}n\mid \chunk{Y}{1}{n_0}}}}{\chunk{Y}{1}{n_0}}[\thv]=
\int \prior(\rmd \theta) \, \py{\theta}{\Xinit}{\chunk{Y}{1}{n_0}}.
$$
Thus, by \Cref{lem:zarbi}, if~\eqref{eq:B6-1} holds, it also holds with $n$
replacing $n_0$.

Now, concerning~\eqref{eq:B6-2}, the comments before \cite[Theorem
1]{barron:1985} show that, using that the observations are stationary under $\PP^\thv_\piv$,
$(\PE^\thv_\piv[\log \py{\thv}{\piv}{Y_{n+1} \mid \chunk{Y}{1}{n}}] )_{n \in \nsetpos}$ is a nondecreasing sequence.
Hence if~\eqref{eq:B6-2} holds, it continues to hold true when $n_0$ is replaced by any
   $n\geq n_0$.
\end{proof}

\begin{lemma} \label{lem:tight:tech}
Consider a \fdPOMM\ satisfying \refhyp[B]{ass:stat:dist} and
  \refhyp[B]{ass:tight:bound}. Then for all $\delta>0$ there exists a compact
  $K\in\mct$ such that
$$
\limsup_{n \to \infty} n^{-1} \log \psup[K^c]{\chunk{Y}{0}{n}} \leq -\delta \quad \PP^\thv_\piv\as,
$$
where $\psup[K^c]{\chunk{Y}{0}{n}}$ is defined in~\eqref{eq:psup-def}.
\end{lemma}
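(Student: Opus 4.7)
The plan is to combine three ingredients: a multiplicative upper bound for $\psup[A]{\chunk{y}{0}{p+q}}$ under concatenation of blocks, Birkhoff's ergodic theorem at scale $\ell$, and a monotone passage $m\to\infty$ exploiting both parts of~\refhyp[B]{ass:tight:bound}.

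First I would establish the submultiplicativity
$$
\psup[A]{\chunk{y}{0}{p+q}}\leq\psup[A]{\chunk{y}{0}{p}}\cdot\psup[A]{\chunk{y}{p}{p+q}}\eqsp,\quad A\in\mct,\ p,q\in\nset\eqsp.
$$
For fixed $\theta$, the integral defining $\psup[\cdot]{\chunk{y}{0}{p+q}}$ equals $(Q^\theta_p\,Q^\theta_q\mathbf{1})(x_0)$, where $Q^\theta_p$ and $Q^\theta_q$ are the non-negative kernels on $\Xset$ obtained by integrating out the $x$-coordinates associated with the first $p$ and last $q$ transitions, respectively; each has $L^\infty\to L^\infty$ operator norm equal to $\sup_x (Q^\theta\mathbf{1})(x)$, and submultiplicativity of operator norms combined with $\sup_\theta\prod_j a_j(\theta)\leq\prod_j\sup_\theta a_j(\theta)$ yields the claim.

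Next, \refhyp[B]{ass:stat:dist} ensures that $(Z_n)$, hence $(Y_n)$, is stationary and ergodic under $\PP^\thv_\piv$. Applying Birkhoff's ergodic theorem to the stationary sequence $(\log\psup[C_m^c]{\chunk{Y}{j\ell}{(j+1)\ell}})_{j\in\nset}$, whose positive parts are dominated by the integrable $\log^+\psup[\Theta]{\chunk{Y}{0}{\ell}}$ via~\eqref{eq:log:moment}, I would deduce that
$$
\frac1k\sum_{j=0}^{k-1}\log\psup[C_m^c]{\chunk{Y}{j\ell}{(j+1)\ell}}\longrightarrow\phi_m\eqdef\PE^\thv_\piv\lrb{\log\psup[C_m^c]{\chunk{Y}{0}{\ell}}}\in[-\infty,\infty)\eqsp,\quad\PP^\thv_\piv\as,
$$
which combined with the submultiplicativity gives $\limsup_k(k\ell)^{-1}\log\psup[C_m^c]{\chunk{Y}{0}{k\ell}}\leq\phi_m/\ell$, $\PP^\thv_\piv\as$ Since $(C_m)$ is non-decreasing, $\psup[C_m^c]{\chunk{Y}{0}{\ell}}$ is non-increasing in $m$ and, by~\eqref{eq:conv:zero}, tends to $0$ $\PP^\thv_\piv\as$; a combination of dominated convergence on the positive part and monotone convergence on the negative part then delivers $\phi_m\to-\infty$. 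Given $\delta>0$, I would therefore fix $m$ so that $\phi_m/\ell<-\delta$ and set $K\eqdef C_m$.

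To extend the $\limsup$ bound from the subsequence $n=k\ell$ to all $n$, I write $n=k\ell+s$ with $0\leq s<\ell$ and reapply the first step:
$$
\psup[K^c]{\chunk{Y}{0}{n}}\leq\psup[K^c]{\chunk{Y}{0}{k\ell}}\cdot\psup[\Theta]{\chunk{Y}{k\ell}{n}}\eqsp.
$$
Since $k\ell/n\to 1$, the first factor contributes at most $\phi_m/\ell<-\delta$ to $\limsup_n n^{-1}\log\psup[K^c]{\chunk{Y}{0}{n}}$. The main technical obstacle I expect lies in the residual factor: showing $\limsup_n n^{-1}\log^+\psup[\Theta]{\chunk{Y}{k\ell}{n}}=0$ $\PP^\thv_\piv\as$ is delicate because~\eqref{eq:log:moment} is only assumed at the single scale $\ell$, whereas the residual block has variable length $s\in\{0,\ldots,\ell-1\}$. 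My plan here is to treat each residue class $s$ separately, use stationarity of $(\log^+\psup[\Theta]{\chunk{Y}{k\ell}{k\ell+s}})_{k\in\nset}$, and invoke a Borel--Cantelli argument on the events $\{\log^+\psup[\Theta]{\chunk{Y}{k\ell}{k\ell+s}}>\epsilon k\}$ after deducing the required integrability of each residual from~\eqref{eq:log:moment} combined with the submultiplicativity of the first step.
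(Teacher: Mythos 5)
Your submultiplicativity bound and your argument that $\phi_m=\PE^\thv_\piv[\log\psup[C_m^c]{\chunk{Y}{0}{\ell}}]\to-\infty$ both match the paper's proof (the paper obtains the latter by applying Fatou's lemma to $U-U_m\geq 0$ rather than by exploiting the monotonicity of $U_m$ in $m$, but the two arguments are interchangeable here). The structural difference is in the ergodic step: the paper feeds the subadditive array $W_{r,t}=\psup[K^c]{\chunk{Y}{r}{t}}$ directly into Kingman's subadditive ergodic theorem (\cite{kingman:1973}), which yields $\lim_{n\to\infty} n^{-1}\log W_{0,n}=\inf_{n\geq\ell}n^{-1}\PE^\thv_\piv[\log W_{0,n}]\leq\ell^{-1}\PE^\thv_\piv[\log\psup[K^c]{\chunk{Y}{0}{\ell}}]$ along the \emph{full} sequence of indices $n$, whereas you apply Birkhoff's theorem to $\ell$-blocks and therefore only control the subsequence $n=k\ell$, leaving the residual factor $\psup[\Theta]{\chunk{Y}{k\ell}{k\ell+s}}$, $0\leq s<\ell$, to be handled separately.

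That residual step is where your proposal has a genuine gap. You propose to deduce the required integrability of each residual from~\eqref{eq:log:moment} ``combined with the submultiplicativity of the first step'', but submultiplicativity runs in the wrong direction for this purpose: it gives $\psup[\Theta]{\chunk{y}{0}{\ell}}\leq\psup[\Theta]{\chunk{y}{0}{s}}\,\psup[\Theta]{\chunk{y}{s}{\ell}}$, i.e.\ an upper bound on the length-$\ell$ quantity in terms of the shorter ones, hence only a \emph{lower} bound on $\log\psup[\Theta]{\chunk{y}{0}{s}}$. Nothing in \refhyp[B]{ass:tight:bound}, which is stated at the single scale $\ell$, bounds $\log^+\psup[\Theta]{\chunk{Y}{0}{s}}$ from above for $s<\ell$ (the unnormalized kernels obtained by integrating out only the $x$-coordinates need not be contractions, so $\psup[\Theta]{\chunk{y}{0}{s}}$ is not dominated by $\psup[\Theta]{\chunk{y}{0}{\ell}}$ either); consequently your Borel--Cantelli argument has no integrable majorant to work with and the extension from $n=k\ell$ to all $n$ stalls. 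The clean repair is exactly the paper's route: keep your submultiplicativity and the choice $K=C_m$ with $\ell^{-1}\phi_m\leq-\delta$, and invoke Kingman's theorem for the array $W_{r,t}$, whose positive-part integrability hypothesis $\PE^\thv_\piv[\log^+ W_{0,\ell}]<\infty$ is supplied by~\eqref{eq:log:moment}; this gives the almost sure limit over all $n$ at once and dispenses with the residual blocks entirely.
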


\begin{proof}
We first show that
\begin{equation}
\label{eq:tightness:one}
\limsup_{m \to \infty} \PE^\thv_\piv[\log \psup[C_m^c]{\chunk{Y}{0}{\ell}}]=-\infty\eqsp.
\end{equation}
Set $U \eqdef \log^+\psup[\Theta]{\chunk{Y}{0}{\ell}}$ and $U_m \eqdef \log
\psup[C_m^c]{\chunk{Y}{0}{\ell}}$. First note that $\PE^\thv_\piv[U_m]$ is
well-defined since by~\eqref{eq:log:moment} in\ \refhyp[B]{ass:tight:bound}, $\PE^\thv_\piv[U_m^+] \leq
\PE^\thv_\piv[U]<\infty$. Now, since $U-U_m\geq 0$, Fatou's lemma yields
\begin{multline*}
\liminf_{m \to \infty}\PE^\thv_\piv[U-U_m]=\PE^\thv_\piv[U]-\limsup_{m \to \infty}\PE^\thv_\piv[U_m] \\
\geq \PE^\thv_\piv\left[\liminf_{m \to \infty}(U-U_m)\right]=\PE^\thv_\piv[U]-\PE^\thv_\piv\left[\limsup_{m \to \infty} U_m\right]\eqsp.
\end{multline*}
Combining with \eqref{eq:conv:zero}  in \refhyp[B]{ass:tight:bound} yields
$$
\limsup_{m \to \infty} \PE^\thv_\piv[\log \psup[C_m^c]{\chunk{Y}{0}{\ell}}] \leq \PE^\thv_\piv\left[\limsup_{m \to \infty}  \log \psup[C_m^c]{\chunk{Y}{0}{\ell}}\right]=-\infty\eqsp,
$$
and \eqref{eq:tightness:one} is shown. Now, let $\delta>0$. According to \eqref{eq:tightness:one}, one may pick $m\in \nset$ sufficiently large such that
$$
\ell^{-1}\PE^\thv_\piv[\log \psup[C_m^c]{\chunk{Y}{0}{\ell}}] \leq -\delta\eqsp.
$$
Now, set $K\eqdef C_m$ and define, for $(r, s) \in \nset^2$ such that $r \leq s$, $W_{r,s} \eqdef \psup[K^c]{\chunk{Y}{r}{s}}$. By
(\ref{eq:log:moment}) in \refhyp[B]{ass:tight:bound},
$\PE[\log^+W_{0,\ell}]<\infty$ and for all $r \leq s \leq t$,
$$
W_{r,t} \leq W_{r,s}W_{s,t}\eqsp.
$$
Since under $\PP^\thv_\piv$, the sequence $\sequence{Y}[n][\nset]$ is
stationary and ergodic, the Kingman subadditive theorem (\cite{kingman:1973})
applies. Thus, $\lim_{n \to \infty} n^{-1}\log W_{0,n}$ exists
$\PP^\thv_\piv\as$ and
$$
\lim_{n \to \infty} n^{-1}\log W_{0,n}=\inf_{n \geq \ell} n^{-1}\PE^\thv_\piv\log W_{0,n}\leq \ell^{-1}\PE^\thv_\piv[\log \psup[K^c]{\chunk{Y}{0}{\ell}}] \leq -\delta\quad \PP^\thv_\piv\as
$$
The proof is completed.
\end{proof}

\begin{proposition} \label{prop:hors:compact} Consider a \fdPOMM\ satisfying
  \refhyp[B]{ass:stat:dist}, \refhyp[B]{ass:tight:bound} and
  \refhyp[B]{ass:additive}.  Assume \refhyp[A]{ass:general} with
  $\gd{\theta}{n}=\py{\theta}{\Xinit}{\chunk{Y}{1}{n}}$, $\gdstar{n} =
  \py{\thv}{\piv}{\chunk{Y}{1}{n}}$ and $\PP=\PP^\thv_\piv$. Then there exists
  a compact set $K \in \mct$ such that
    $$
        \limsup_{n \to \infty} n^{-1}\log \post{\chunk{Y}{1}{n}}(K^c)
        < 0  \quad  \PP^\thv_\piv\as
    $$
\end{proposition}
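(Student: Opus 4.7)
The strategy is to control
\[
\post{\chunk Y1n}(K^c) = \frac{\int_{K^c} p_\theta(\chunk Y1n)\,\lambda(\rmd\theta)}{\int_\Theta p_\theta(\chunk Y1n)\,\lambda(\rmd\theta)}
\]
by bounding numerator and denominator separately, so that the ratio decays exponentially for a suitably chosen compact $K$. The denominator is handled directly by \refhyp[A]{ass:general}: applying \Cref{eq:lemma:merging:w:pr:one} yields that, for every $\varepsilon > 0$, $\int_\Theta p_\theta(\chunk Y1n)\,\lambda(\rmd\theta) \geq \gdstar n\cdot \rme^{-\varepsilon n}$ eventually $\PP^\thv_\piv\as$, hence $\liminf_n n^{-1}\log\int_\Theta (p_\theta(\chunk Y1n)/\gdstar n)\,\lambda(\rmd\theta)\geq 0$.

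For the numerator I split the likelihood at time $n_0$ using the Markov property: integrating out $(X_{n_0+1},\ldots,X_n)$ and taking a supremum over $X_{n_0}$ gives, for every $\theta\in K^c$,
\[
p_\theta(\chunk Y1n) \leq p_\theta(\chunk Y1{n_0})\cdot \psup[\{\theta\}]{\chunk Y{n_0}n} \leq p_\theta(\chunk Y1{n_0})\cdot \psup[K^c]{\chunk Y{n_0}n}.
\]
Integrating with respect to $\lambda$ restricted to $K^c$ and invoking~\eqref{eq:B6-1} yields
\[
\int_{K^c} p_\theta(\chunk Y1n)\,\lambda(\rmd\theta) \leq \psup[K^c]{\chunk Y{n_0}n}\cdot C_0,
\]
where $C_0 \eqdef \int_\Theta p_\theta(\chunk Y1{n_0})\,\lambda(\rmd\theta) < \infty$ $\PP^\thv_\piv\as$

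Combining the two bounds and passing to logarithms reduces the claim to showing that $n^{-1}\log(\psup[K^c]{\chunk Y{n_0}n}/\gdstar n)$ is eventually strictly negative for a well-chosen $K$. The first factor is handled by \Cref{lem:tight:tech}, which provides, for any prescribed $\delta > 0$, a compact $K$ such that $\limsup_n n^{-1}\log\psup[K^c]{\chunk Y0n}\leq -\delta$ $\PP^\thv_\piv\as$; the same a.s.\ bound remains valid with $\chunk Y{n_0}n$ in place of $\chunk Y0n$ by applying Kingman's subadditive theorem to the stationary subadditive array $W_{n_0+r,n_0+s}\eqdef \psup[K^c]{\chunk Y{n_0+r}{n_0+s}}$. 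For the second factor, I rely on the Shannon--McMillan--Breiman theorem for the stationary ergodic sequence of observations under $\PP^\thv_\piv$ (see, e.g., \cite{barron:1985}): $n^{-1}\log\gdstar n\to -h$ $\PP^\thv_\piv\as$, where the entropy rate $h$ is finite thanks to~\eqref{eq:B6-2} combined with the monotonicity of $(\PE^\thv_\piv[\log p_{\thv,\piv}(Y_{n+1}\mid\chunk Y1n)])_n$ recorded in the proof of \Cref{lemma:noncompactcond-n0-all-n}, which gives $-h \geq \PE^\thv_\piv[\log p_{\thv,\piv}(Y_{n_0}\mid\chunk Y1{n_0-1})] > -\infty$.

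Assembling the three ingredients produces
\[
\limsup_{n\to\infty} n^{-1}\log\post{\chunk Y1n}(K^c) \leq -\delta + h\quad \PP^\thv_\piv\as,
\]
so that choosing $\delta > h$ (which fixes the compact $K$ via \Cref{lem:tight:tech}) gives the desired strict negativity. The main technical hurdle is invoking the Shannon--McMillan--Breiman step under the minimal integrability provided by~\eqref{eq:B6-2}, and ensuring that the a.s.\ estimates on $\psup[K^c]{\chunk Y{n_0}n}$, the merging denominator, and $\gdstar n$ all hold outside a single $\PP^\thv_\piv$-null set; once this is granted, the remaining pieces---the merging lemma, the Markov splitting at time $n_0$, and the exponential tightness of $\psup[K^c]{\cdot}$---combine without further difficulty.
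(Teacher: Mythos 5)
Your argument is correct and follows essentially the same route as the paper's own proof: the same splitting of the likelihood at time $n_0$ to bound the numerator by $p_{\theta,\eta}(\chunk Y1{n_0})\,\psup[K^c]{\chunk Y{n_0}{n}}$, the same use of \Cref{lem:tight:tech} (with the choice of $K$ via a sufficiently large $\delta$), Barron's generalized Shannon--McMillan--Breiman theorem for $n^{-1}\log\gdstar{n}$ with \eqref{eq:B6-2} guaranteeing $\tilde V>-\infty$, and \Cref{eq:lemma:merging:w:pr:one} for the denominator. The only cosmetic differences are that you let $\varepsilon\to0$ in the merging step (obtaining $\liminf\geq0$ where the paper keeps a fixed $-\epsilon$) and that you explicitly justify the shift from $\chunk Y0n$ to $\chunk Y{n_0}{n}$, which the paper leaves implicit.
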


\begin{proof}
Following \cite{barron:1985}, let us define, for all $n \in \nsetpos$,
$$
 V_n\eqdef \py{\thv}{\piv}{\chunk{Y}{1}{n}}\eqsp,\quad \tilde V_n\eqdef \PE^\thv_\piv[\log(V_n/V_{n-1})].
$$
By~\eqref{eq:B6-2} in \refhyp[B]{ass:additive}, we have $\tilde V_{n_0}>-\infty$.
As explained in the comments before \cite[Theorem 1]{barron:1985},
$\sequence{\tilde V}[n][\nsetpos]$ is a non-decreasing sequence, and denoting by
$\tilde V$ its limit in $(-\infty,\infty]$, by
\cite[Theorem 1]{barron:1985}, we have
\begin{equation}
\label{eq:tight:proof:three}
\lim_{n \to \infty} n^{-1} \log V_n= \tilde V\quad \PP^\thv_\piv\as
\end{equation}
Pick $\delta>0$ and $\epsilon>0$ such that
$$
-\delta-\tilde V+\epsilon<0\eqsp.
$$
According to \Cref{lem:tight:tech}, there exists a compact set $K \in \mct$ such that
\begin{equation}
\label{eq:tight:proof:one}
\limsup_{n \to \infty} n^{-1} \log \psup[K^c]{\chunk{Y}{0}{n}} \leq -\delta \quad \PP^\thv_\piv\as
\end{equation}
Now, write for all $n \in \nset$ strictly larger than $n_0$, 
\begin{align}
\post{\chunk{Y}{1}{n}}(K^c) &\leq \left(\int_{K^c} \py{\theta}{\Xinit}{\chunk{Y}{1}{n_0}} \prior(\rmd \theta) \right)\frac{ \psup[K^c]{\chunk{Y}{n_0}{n}}}{ \py{\prior}{\Xinit}{\chunk{Y}{1}{n}}} \leq \py{\prior}{\Xinit}{\chunk{Y}{1}{n_0}} \frac{U_n}{V_n W_n},\label{eq:tight:proof:main}
\end{align}
where
\begin{align*}
& U_n \eqdef \psup[K^c]{\chunk{Y}{n_0}{n}}\eqsp,\quad W_n\eqdef \py{\prior}{\Xinit}{\chunk{Y}{1}{n}}/\py{\thv}{\piv}{\chunk{Y}{1}{n}}\eqsp.
\end{align*}
By the first condition in  \refhyp[B]{ass:additive},
\begin{equation}\label{eq:tight:proof:two}
\limsup_{n \to \infty}n^{-1}\log \py{\prior}{\Xinit}{\chunk{Y}{1}{n_0}}=0 \quad \PP^\thv_\piv\as
\end{equation}
Moreover, according to \Cref{eq:lemma:merging:w:pr:one}, \refhyp[A]{ass:general} implies
\begin{equation}\label{eq:tight:proof:four}
\liminf_{n \to \infty} n^{-1} \log W_n \geq -\epsilon \quad \PP^\thv_\piv\as
\end{equation}
Finally, combining \eqref{eq:tight:proof:three}, \eqref{eq:tight:proof:one}, \eqref{eq:tight:proof:two}, \eqref{eq:tight:proof:four} with \eqref{eq:tight:proof:main} yields:
$$
\limsup_{n \to \infty} n^{-1}\log\post{\chunk{Y}{1}{n}}(K^c)\leq
-\delta-\tilde V+\epsilon<0 \quad \PP^\thv_\piv\as
$$
\end{proof}

\section{Conclusion}
\label{sec:conclusion}
We have established that the posterior consistency
for {\fdPOMM}s is a consequence of the consistency of the AMLE under---what we
believe---minimal assumptions that can be checked for a variety of models used
in practice. Importantly, our assumptions can be checked for models where the
state space of the latent process is non-compact, which is most often the case
in applications (including, e.g., the linear Gaussian state-space
models). Moreover, we allow also the parameter space to be non-compact, which
is essential in the Bayesian setting (where many prior distributions of
fundamental importance have infinite support), and the prior to be
improper. Thus, our results generalize substantially existing results in this
direction, which focus exclusively on the special case of HMMs and require both
the state space of the hidden chain and the parameter space to be compact. Our
proofs rely on a machinery revolving around the general concept of
$\PP$-remoteness introduced in Section~\ref{sec:gener-approach}, which is
naturally linked to the consistency of the MLE in the frequentist setting. As
far as known to the authors, this link has not been explored before in the
literature.

Our analysis relies substantially on the assumption that the model is fully
dominated, which is certainly a restriction. A natural direction for research
is hence the relaxation of this assumption, and incorporating techniques
developed in \cite{douc-roueff-sim2015b} into the analysis could allow our
results to be extended to observation-driven models (including the GARCH
framework). Moreover, as we do not provide any rate of convergence,
supplementing our results with a Bernstein-von~Mises-type theorem would of
course be desirable; nevertheless, establishing such a theorem involves
typically, \emph{inter alia}, a law of large numbers for the observed Fisher
information, which appears to be a real challenge in the non-compact setting
(the proof of a Bernstein-von~Mises-type theorem is hence expected to be at
least on the same level of difficulty as the proof of the asymptotic normality
of the MLE, which is still an open problem in the non-compact case). Finally,
another natural topic for future research is the extension of our results in
the direction of nonparametric Bayesian modeling.


\appendix

\section{Some kernel notation}
\label{sec:kernel:notation}

Let $\mu$ be a signed measure on some measurable space $(\Xset, \Xsigma)$. For any $|\mu|$-integrable function $h$, we denote by
$$
\mu h \eqdef \int h(x) \, \mu(\rmd x)
$$
the Lebesgue integral of $h$ w.r.t. $\mu$. 

In addition, let $(\Yset, \Ysigma)$ be some other measurable space and $\kernel$ some possibly unnormalized transition kernel $\kernel : \Xset \times \Ysigma \rightarrow \rset_+$. The kernel $\kernel$ induces two integral operators, one acting on functions and the other on measures. More specifically, given a measure $\nu$ on $(\Xset, \Xsigma)$ and a measurable function $h$ on $(\Yset, \Ysigma)$, we define the measure 
$$
\nu \kernel : \Ysigma \ni A \mapsto \int \kernel(x, A) \, \nu(\rmd x) 
$$
and the function 
$$
\kernel h : \Xset \ni x \mapsto \int h(y) \, \kernel(x, \rmd y)
$$
whenever these quantities are well-defined. For the latter, we will, whenever convenient, use also the alternative notation $\kernel(\cdot, h)$. 

Finally, given a third measurable space $(\Zset, \Zsigma)$ and another kernel $\mathbf{L} : \Yset \times \Zsigma \rightarrow \rset_+$ we define, with $\kernel$ as above, the \emph{product kernel}
$$
\kernel \mathbf{L} : \Xset \times \Zsigma \ni (x, B) \mapsto \int  \mathbf{L}(y, B) \, \kernel(x, \rmd y),
$$
whenever this is well-defined. When $\kernel$ describes transitions within the \emph{same} space $(\Xset, \Xsigma)$, its \emph{iterates} are defined inductively by 
$$
\kernel^0(x, \cdot)  \eqdef \delta_x \mbox{ for all } x \in \Xset \mbox{ and } \kernel^n \eqdef \kernel^{n - 1} \kernel \mbox{ for all }n \in \nsetpos. 
$$

\section{Dominated i.i.d. model}
  \label{sec:exple-iid}
  The general setting in~\ref{sec:general-setting} is a bit unusual in the sense that all
  densities and dominating measures are defined directly on the same space
  $\Omega$ (endowed however with different $\sigma$-fields picked among the
  members of the sequence $\sequence{\mcf}[n][\nset]$). The advantage is to avoid writing all
  likelihoods as functions of the observations, yielding more compact
  expressions and arguments. In order to illustrate better how this setting can be used 
  in practice, we describe it in the simple i.i.d. case.

  Consider the dominated i.i.d. case comprising an $n$-sample $\chunk{Y}{1}{n}=(Y_1,\dots,Y_n)$ of i.i.d. observations
  taking on values in $(\Yset,\Ysigma)$ and having marginal density $\Td[\thv]$, depending on
  some unknown parameter $\thv\in\Theta$, w.r.t. a given $\sigma$-finite
  dominating measure $\nu$ on $(\Yset,\Ysigma)$. In this case, we set
  $\sequence{Y}[n][\nsetpos]$ as the canonical process defined on
  $\Omega=\Yset^{\nsetpos}$ endowed with $\mcf=\Ysigma^{\tensprod\nsetpos}$, the
  $\sigma$-field generated by cylinder sets. Then
  $\mcf_n=\sigma(\chunk{Y}{1}{n})$ and $\gd{\theta}{n}=\Td[\theta]^{\tensprod
    n}(\chunk{Y}{1}{n})$, where $\Td[\theta]^{\tensprod n}$ denotes the $n$th
  self tensor product of $\Td[\theta]$ defined on $\Yset^n$.  While
  $\Td[\theta]^{\tensprod n}$ is a density w.r.t. the product measure
  $\nu^{\tensprod n}$, $\gd{\theta}{n}$ is a density with respect to the
  $\sigma$-finite measure $\nu_n$ given by 
\begin{equation}
  \label{eq:nu-n-def-canonic-1}
\nu_n(B)=\nu^{\tensprod n}(A)\eqsp,\ B\in\mcf_n
\text{ with $B={[\chunk Y1n]}^{-1}(A)$ and $A\in\Ysigma^{\tensprod n}$}\eqsp.
\end{equation}
In the i.i.d. case, one assumes that there is a true parameter
$\thv\in\Theta$ and $\PP$ is the corresponding distribution of
$\sequence{Y}[n][\nsetpos]$, implying that  $\gdstar{n}=\gd{\thv}{n}$.

Note that the $\mcf_n\to\Ysigma^{\tensprod n}$-mapping defined by $B\mapsto
\chunk Y1n(B)=\{\chunk Y1n(\omega), \omega\in B\}$ is bijective and can be
seen as the (unique) reciprocal of the preimage mapping $[\chunk
Y1n]^{-1}:\Ysigma^{\tensprod n}\to\mcf_n$.  Hence $\nu_n$
in~\eqref{eq:nu-n-def-canonic-1} can be equivalently defined as
\begin{equation}
  \label{eq:nu-n-def-canonic}
  \nu_n=\nu^{\tensprod n}\circ\chunk Y1n\text{ on $\mcf_n$}\Longleftrightarrow
  \nu_n\circ[\chunk Y1n]^{-1}=\nu^{\tensprod n}\text{ on $\Ysigma^{\tensprod n}$}\eqsp.
\end{equation}

\section{Postponed proof of general results}
\label{sec:proof-crefthm:general}
\subsection{Proof of \Cref{prop:cns:remote}}
\label{subsec:proof:prop:cns:remote}
Let $A \in \mathcal{T}$ and suppose that $B_n \in \mcf_n$ for all $n\in\nset$ and that \eqref{eq:cns:theta}
and \eqref{eq:cns:thv} hold. Then there exists $\rho>1$ such that
\begin{equation*}
 \PE\lr{\sum_{n=1}^\infty \rho^n \int_A \prior(\rmd \theta) \, \frac{\gd{\theta}{n}}{\gdstar{n}} \oneSub{B_n}}
 =\sum_{n=1}^\infty \rho^n \int_A \prior(\rmd \theta) \, \PP_{\theta,n} (B_n)<\infty\eqsp.
\end{equation*}
This implies
$$
\sum_{n=1}^\infty \rho^n \int_A \prior(\rmd \theta) \, \frac{\gd{\theta}{n}}{\gdstar{n}} \oneSub{B_n} <\infty\eqsp, \quad \PP\as
$$
Now, the set $\Omega_0 = \liminf_{n \to \infty} B_n \in\mcf$ of $\PP$-probability one in
\eqref{eq:cns:thv} is such that for all $\omega \in \Omega_0$,
$\{n \in \nset : \oneSub{B_n}(\omega)=0\}$ is finite. Thus, the series $\sum_{n=1}^\infty
\rho^n {\int_A \prior(\rmd \theta) \, \gd{\theta}{n}}/{\gdstar{n}} $ is convergent
$\PP\as$, establishing that $A$ is $\PP$-remote.

Conversely, if  $A \in \mathcal{T}$ is $\PP$-remote, then choose $\rho>1$ such that
\begin{equation} \label{eq:remote:cond}
\limsup_{n \to \infty} n^{-1} \log \left( \int_A \prior(\rmd \theta) \, \frac{\gd{\theta}{n}}{\gdstar{n}} \right) \leq -\log \rho<0 \quad \PP\as
\end{equation}
Pick $ \tilde \rho \in (1,\rho)$ and $\varrho \in (1/\tilde \rho,1)$. Set
$$
B_n\eqdef\lrc{\int_A \prior(\rmd \theta) \, \frac{\gd{\theta}{n}}{\gdstar{n}} \leq \varrho^n}\eqsp.
$$
Then $B_n\in\mcf_n$ and, by Tonelli's theorem,
$$
\int_A \prior(\rmd \theta) \, \PP_{\theta,n}(B_n) \leq \varrho^n \PP(B_n) \leq \varrho^n \quad \PP\as;
$$
thus, \eqref{eq:cns:theta} is satisfied. Since $\tilde \rho<\rho$, \eqref{eq:remote:cond} implies
$$
\sup_{n \in \nset} \tilde \rho^n \int_A \prior(\rmd \theta) \, \frac{\gd{\theta}{n}}{\gdstar{n}} <\infty  \quad \PP\as,
$$
so that
$$
 \PP(\ensemble{\omega \in \Omega}{\omega\in B_n^c \ \; \mathrm{i.o.}}) = \PP\lr{ \tilde \rho^n \int_A \prior(\rmd \theta) \, \frac{\gd{\theta}{n}}{\gdstar{n}}  > (\tilde \rho\varrho)^n \ \; \mbox{i.o.}}=0\eqsp,
 $$
and \eqref{eq:cns:thv} holds.

\section{Useful lemmas}
\begin{lemma}
  \label{lem:zarbi}
Let $Z$ be a $[0,\infty]$-valued variable on $(\Omega,\mcf,\PP)$ and let $\mcg$ be a
sub-$\sigma$-field of $\mcf$. If $\cesp{Z}{\mcg}<\infty$ $\PP\as$, then
$Z<\infty$  $\PP\as$
\end{lemma}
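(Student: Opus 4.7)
The plan is to reduce the statement to finiteness of the ordinary expectation on a sequence of events in $\mcg$ that exhaust the event $\{\cesp{Z}{\mcg}<\infty\}$, using the tower property. The subtlety is that the event $\{Z<\infty\}$ a priori need not belong to $\mcg$, so we cannot directly transfer integrability statements through $\mcg$; instead, we condition on a truncation of $\cesp{Z}{\mcg}$, which is $\mcg$-measurable by construction.

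First I would define, for each $n\in\nsetpos$, the $\mcg$-measurable event $B_n \eqdef \{\cesp{Z}{\mcg}\leq n\}$. By the tower property,
$$
\esp{Z\,\1{B_n}} = \esp{\cesp{Z\,\1{B_n}}{\mcg}} = \esp{\1{B_n}\,\cesp{Z}{\mcg}} \leq n < \infty,
$$
where the second equality uses that $\1{B_n}$ is $\mcg$-measurable. Hence $Z\,\1{B_n}$ is finite $\PP\as$, and in particular $\PP(Z=\infty,\, B_n)=0$.

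Next I would observe that $\sequence{B}[n][\nsetpos]$ is non-decreasing with union $\bigcup_{n}B_n=\{\cesp{Z}{\mcg}<\infty\}$, which by assumption has $\PP$-probability one. Splitting the event $\{Z=\infty\}$ accordingly yields
$$
\PP(Z=\infty) \leq \PP\lr{\cesp{Z}{\mcg}=\infty} + \sum_{n=1}^\infty \PP(Z=\infty,\,B_n) = 0,
$$
which gives the desired conclusion. There is no real obstacle here beyond being careful that one truncates through a $\mcg$-measurable event before invoking the tower property; the proof will be short.
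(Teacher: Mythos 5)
Your proof is correct. Each step is valid: for a nonnegative, possibly infinite-valued $Z$, the identity $\esp{Z\1{B_n}}=\esp{\1{B_n}\,\cesp{Z}{\mcg}}$ holds without any integrability hypothesis (it is just the defining property of conditional expectation applied to the $\mcg$-measurable set $B_n=\{\cesp{Z}{\mcg}\leq n\}$), the bound by $n$ follows, and the exhaustion $\bigcup_n B_n=\{\cesp{Z}{\mcg}<\infty\}$ together with the hypothesis finishes the argument. The paper's proof takes a slightly different, more compressed route: it sets $B=\{Z=\infty\}$ and observes directly that $\cesp{Z}{\mcg}\geq\cesp{Z\1{B}}{\mcg}=\infty$ on the event $\{\prob{B}[][\mcg]>0\}$, so the hypothesis forces $\prob{B}[][\mcg]=0$ $\PP\as$ and hence $\PP(B)=0$. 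The paper thus conditions the indicator of the bad event and reads off the contradiction pointwise in one line, whereas you truncate by the level sets of the conditional expectation and pass through unconditional expectations via the tower property. Both arguments are elementary and essentially equivalent in content; the paper's avoids the countable decomposition at the cost of manipulating the $[0,\infty]$-valued quantity $\infty\cdot\prob{B}[][\mcg]$, while yours keeps every expectation finite before taking limits, which some readers may find cleaner.
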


\begin{proof}
  Let  $B=\{Z=\infty\}$. Then we have
$$
\cesp{Z}{\mcg}\geq\cesp{Z\1B}{\mcg}=\infty\ \text{on}\;\left\{\PP(B\,|\,\mcg)>0\right\}.
$$
Hence, if  $\cesp{Z}{\mcg}<\infty$ $\PP\as$, then $\PP(B\,|\,\mcg)=0$ $\PP\as$
and so $\PP(B)=0$.
\end{proof}

\begin{lemma} \label{lem:ergodic:indep:product}
  Let $Q$ and $Q'$ be two Markov kernels on $(\Xset,\Xsigma)$ and
  $(\Xset',\Xsigma')$, respectively. Let $\bar Q$ the Markov kernel on
  $(\Xset\times\Xset',\Xsigma\tensprod\Xsigma')$ defined by, for all $A \in \Xsigma$ and $B \in \Xsigma'$, 
$$
\bar Q((x,x'),A\times B)=Q(x,A) Q'(x,B) \eqsp.
$$
Suppose that $Q$ and $Q'$ are ergodic with stationary distributions $\pi$ and
$\pi'$, respectively, in the sense that for all initial distributions $\Xinit$ and $\Xinit'$ on $(\Xset,\Xsigma)$ and
$(\Xset',\Xsigma')$, respectively, it holds that
$$
\lim_{n \to \infty}\|\Xinit Q^n -\pi\|_{\mathrm{TV}}=0
\text{ and }
\lim_{n \to \infty}\|\Xinit' Q^{\prime n} -\pi'\|_{\mathrm{TV}}=0
\eqsp.
$$
Then $\bar Q$ is ergodic with stationary distribution
$\pi\tensprod\pi'$.
\end{lemma}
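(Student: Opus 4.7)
The plan is to first verify that $\pi\tensprod\pi'$ is invariant under $\bar Q$ and then establish the total variation convergence by reducing to the marginal ergodicity of $Q$ and $Q'$.

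For invariance, by Fubini and the invariance of $\pi$ and $\pi'$,
\[
(\pi\tensprod\pi')\bar Q(A\times B)=\int Q(x,A)Q'(x',B)\,\pi(\rmd x)\pi'(\rmd x')=\pi(A)\pi'(B).
\]
Extension to the product $\sigma$-field $\Xsigma\tensprod\Xsigma'$ follows by a standard monotone class argument.

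The key structural observation, which I would establish by induction on $n\in\nsetpos$ from the definition of $\bar Q$, is the product form of its iterates:
\[
\bar Q^n((x,x'),\,\cdot\,)=Q^n(x,\,\cdot\,)\tensprod Q^{\prime\, n}(x',\,\cdot\,).
\]
Given an initial distribution $\bar\Xinit$ on $(\Xset\times\Xset',\Xsigma\tensprod\Xsigma')$, denote by $\Xinit$ and $\Xinit'$ its marginals. For any measurable $f:\Xset\times\Xset'\to\rset$ with $\|f\|_\infty\leq1$, I would then write, using the product form of $\bar Q^n$,
\[
\bar\Xinit\bar Q^n f-(\pi\tensprod\pi')f=\int\bar\Xinit(\rmd x,\rmd x')\lrb{(Q^n(x,\cdot)\tensprod Q^{\prime\,n}(x',\cdot))f-(\pi\tensprod\pi')f},
\]
and decompose the integrand by the triangle inequality as
\[
\lr{Q^n(x,\cdot)-\pi}\tensprod Q^{\prime\,n}(x',\cdot)\,f+\pi\tensprod\lr{Q^{\prime\,n}(x',\cdot)-\pi'}\,f.
\]
Since $|f|\leq1$, each of the two terms in absolute value is at most $\|Q^n(x,\cdot)-\pi\|_{\mathrm{TV}}$ and $\|Q^{\prime\,n}(x',\cdot)-\pi'\|_{\mathrm{TV}}$ respectively (Fubini plus the dual characterization of the TV norm).

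Integrating with respect to $\bar\Xinit$ and taking the supremum over $f$ yields
\[
\|\bar\Xinit\bar Q^n-\pi\tensprod\pi'\|_{\mathrm{TV}}\leq\int\|Q^n(x,\cdot)-\pi\|_{\mathrm{TV}}\,\Xinit(\rmd x)+\int\|Q^{\prime\,n}(x',\cdot)-\pi'\|_{\mathrm{TV}}\,\Xinit'(\rmd x').
\]
The hypothesis of ergodicity applied with $\Xinit=\delta_x$ and $\Xinit'=\delta_{x'}$ gives that both integrands vanish pointwise as $n\to\infty$. Since they are bounded by $2$, the dominated convergence theorem concludes. The only mildly delicate step is the triangle decomposition above, ensuring that the TV bound is uniform in the complementary variable; everything else is routine.
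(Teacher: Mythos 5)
Your proposal is correct and follows essentially the same route as the paper: both rest on the product form $\bar Q^n((x,x'),\cdot)=Q^n(x,\cdot)\tensprod Q^{\prime\,n}(x',\cdot)$ and the same two-term decomposition $(Q^n(x,\cdot)-\pi)\tensprod Q^{\prime\,n}(x',\cdot)+\pi\tensprod(Q^{\prime\,n}(x',\cdot)-\pi')$ applied to a bounded test function. You are somewhat more complete than the paper, which omits the explicit check that $\pi\tensprod\pi'$ is invariant and leaves implicit the dominated convergence step passing from Dirac initial points to arbitrary initial distributions.
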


\begin{proof}
  Let $h$ be a measurable function on
  $\Xset\times\Xset'$ such that $| h | \leq 1$. For all $n\in \nsetpos$, we
  may write, for $(x,x')\in\Xset\times\Xset'$,  $\bar Q^n h(x,x')-(\pi\tensprod\pi')h$ as
   \begin{multline*}
\int \left(\int h(x_n,x'_n)\, Q^n(x,\rmd x_n)-\int  h(x_n,x'_n) \, \pi(\rmd x_n)\right)
Q^{\prime n}(x',\rmd x'_n)\\
+\int \left(\int  h(x_n,x'_n) \, Q^{\prime n}(x',\rmd x'_n)
-\int  h(x_n,x'_n)\,\pi'(\rmd x'_n)\right) \, \pi(\rmd x_n)\eqsp,
  \end{multline*}
from which we immediately deduce that
$$
\lim_{n \to \infty}\|\delta_{(x,x')}Q^{\prime n} -\pi\tensprod\pi'\|_{\mathrm{TV}}=0\eqsp.
$$
The ergodicity of $\bar Q$ follows.
\end{proof}
\bibliographystyle{plain}
\bibliography{dorbibs}

\end{document}